\newtheorem{theorem}{Theorem}[section]
\newtheorem{lem}[theorem]{Lemma}
\newtheorem{prop}[theorem]{Proposition}
\newtheorem{dfn}[theorem]{Definition}
\newtheorem{rem}[theorem]{Remark}
\newtheorem{cor}[theorem]{Corollary}
\newcommand{\Tt}{{\rm T}}
\newcommand{\Bb}{{\rm B}}
\newcommand{\Aa}{{\rm A}}
\newcommand{\Cc}{{\rm C}}
\newcommand{\Dd}{{\rm D}}
\newcommand{\Zee}{\mathbb{Z}}
\newcommand{\Cee}{\mathbb{C}}
\begin{document}

\title{Lattice Polarized K3 Surfaces and Siegel Modular Forms}
\author{Adrian Clingher
\thanks{
Department of Mathematics and Computer Science, University of Missouri - St. Louis, St. Louis  \ MO 63121. {\bf e-mail:} {\it clinghera@umsl.edu}
}
\and
Charles F. Doran\thanks{
Department of Mathematical and Statistical Sciences, University of Alberta, Edmonton AB T6G 2G1. {\bf e-mail:} {\it doran@math.ualberta.ca}
}
}
\date{}
\maketitle
\begin{center}
\abstract{
\noindent The goal of the present paper is two-fold. First, we present a classification of algebraic K3 surfaces polarized 
by the lattice ${\rm H} \oplus {\rm E}_8 \oplus {\rm E}_7$. Key ingredients for this classification are: a normal form for 
these lattice polarized K3 surfaces, a coarse moduli space and an explicit description of the inverse period map in terms 
of Siegel modular forms. Second, we give explicit formulas for a Hodge correspondence that relates these K3 surfaces to 
principally polarized abelian surfaces. The Hodge correspondence in question underlies a geometric two-isogeny of K3 
surfaces, the details of which are described in \cite{clingher5}.    
}
\end{center}

\section{Introduction}
Let ${\rm X}$ be an algebraic K3 surface defined over the field of complex numbers. Denote by ${\rm NS}({\rm
X})$ the N\'{e}ron-Severi lattice of ${\rm X}$. This is an even lattice of signature $(1, {\rm p}_{{\rm X}}-1)$
where ${\rm p}_{{\rm X}}$ is the Picard rank. By definition (see \cite{dolga1}), a {\it lattice polarization}
on the surface ${\rm X}$ is given by a primitive lattice embedding
$$ i \colon {\rm N} \ \hookrightarrow \ {\rm NS}({\rm X}) $$
whose image contains a pseudo-ample class. Here ${\rm N}$ is a choice of even lattice of signature 
$(1,r)$ with $ 0 \leq r \leq 19$. Two ${\rm N}$-polarized
K3 surfaces $({\rm X},i)$ and $({\rm X}',i')$ are said to be isomorphic
if there exists an analytic isomorphism $ \alpha \colon {\rm X} \rightarrow {\rm X}'$
such that $ \alpha^* \circ i' = i$, where $\alpha^*$ is the appropriate cohomology morphism.
\par The present paper concerns the special class of K3 surfaces polarized by the even lattice of rank seventeen 
$$ {\rm N} = {\rm H} \oplus {\rm E}_8 \oplus {\rm E}_7 . $$
Here ${\rm H}$ stands for the standard hyperbolic lattice of
rank two and ${\rm E}_8$, ${\rm E}_7$ are negative definite lattices associated with the corresponding exceptional root systems. 
Surfaces in this class have Picard ranks taking four possible values: $17$, $18$, $19$ or $20$.
\par This special class of algebraic K3 surfaces is of interest because of a remarkable Hodge-theoretic feature. 
Any given ${\rm N}$-polarized K3 surface 
$({\rm X},i)$ is associated uniquely with a well-defined principally polarized complex abelian surface 
$({\rm A}, \Pi)$. This feature 
appears due to the fact that both types of surfaces mentioned above are classified, via appropriate versions of Torelli Theorem, 
by a Hodge structure of weight two on ${\rm T} \otimes \mathbb{Q}$ where ${\rm T}$ is the rank-five lattice 
${\rm H} \oplus {\rm H} \oplus (-2) $. This fact determines a bijective map:
\begin{equation}
\label{basiccor}
({\rm X},i) \ \leftrightarrow \ ({\rm A}, \Pi)
\end{equation}
which is a Hodge correspondence. In fact, the map $(\ref{basiccor})$ can be regarded as a particular case of a more general 
Hodge-theoretic construction due to Kuga and Satake \cite{kugasatake}. In particular, map $(\ref{basiccor})$ realizes an analytic 
identification between the moduli spaces of periods associated with the two types of surfaces, both of which could be seen 
as the classical Siegel modular threefold 
$  \mathcal{F}_2 =  {\rm Sp}_4(\mathbb{Z}) \backslash {\mathbb H}_2. $
\par The correspondence given by $(\ref{basiccor})$ can be further refined. The set of all isomorphism classes of ${\rm N}$-polarized K3 surfaces divides naturally
into two disjoint subclasses. The first subclass consists of those surfaces $({\rm X},i)$ for which the
lattice polarization $i$ extends canonically to a polarization by the unimodular rank-eighteen lattice 
$ {\rm M} = {\rm H} \oplus {\rm E}_8 \oplus {\rm E}_8$. 
In terms of the Siegel modular threefold $\mathcal{F}_2$, this
subclass is associated with the Humbert surface usually denoted by $ {\mathcal H}_1 $. Under $(\ref{basiccor})$, 
the principally polarized abelian surface $({\rm A}, \Pi)$ associated to a ${\rm
M}$-polarized K3 surfaces $({\rm X},i)$ is of the form:
$$ \left ( \ {\rm E}_1 \times {\rm E}_2, \ \mathcal{O}_{{\rm E}_1 \times {\rm E}_2}(\,({\rm E}_1 \times \{p_2\}) +
(\{p_1\} \times {\rm E}_2)\,) \   \right ) $$ where $({\rm E}_1,p_1)$ and $({\rm E}_2,p_2)$ are complex elliptic
curves, uniquely determined up to permutation.
%This realizes an identification between the Humbert surface
%$\mathcal{H}_1$ and the moduli space of (unordered) pairs of elliptic curves.
\par The second subclass is given by those ${\rm N}$-polarized K3 surfaces $({\rm X}, i)$ for which the lattice 
polarization cannot be extended from ${\rm N}$ to ${\rm M}$. These surfaces correspond in the Siegel threefold 
to the open region $ \mathcal{F}_2 \setminus {\mathcal H}_1 $. Their associated principally polarized abelian surfaces 
$({\rm A}, \Pi)$ are of the form:
$$ \left ( \ {\rm Jac}({\rm C}), \ \mathcal{O}_{{\rm Jac}({\rm C})}(\Theta) \   \right ) $$
where ${\rm C}$ is a non-singular complex genus-two curve and $\Theta$ is the theta-divisor, the image of ${\rm C}$ under the
Abel-Jacobi  embedding. The genus-two curve ${\rm C}$ is uniquely determined
by the pair $({\rm X},i)$ and $(\ref{basiccor})$ provides an analytic identification between
$\mathcal{F}_2 \setminus {\mathcal H}_1$ and the moduli space $ \mathcal{M}_2$ of complex genus-two curves.
\par The goal of the present paper is two-fold. First, we present a full classification theory for ${\rm N}$-polarized K3 surfaces 
along the lines of the classical theory of elliptic curves defined over the field of complex numbers. Second, we give explicit 
formulas for the correspondence $(\ref{basiccor})$ in terms of Siegel modular forms. 
\par A key ingredient for the results of this paper is the introduction of a {\it normal form} associated to K3 surfaces with 
${\rm N}$-polarizations. It will be instructive to first recall the classical Weierstrass normal form for complex elliptic curves 
and to trace our results in parallel
with that case. 
\begin{theorem}
Let $(g_2, g_3)$ be a pair of complex numbers.  Denote by ${\rm E}(g_2, g_3)$ the curve in $\mathbb{P}^2(x,y,z)$ cut out by the degree-three
homogeneous equation:
\begin{equation}
y^2z - 4 x^3 + g_2 xz^2 + g_3 z^3 \ = \ 0 \ .
\end{equation}
\begin{itemize}
\item [(a)] If $\Delta := g_2^3 - 27 g_3^2$ is nonzero, then ${\rm E}(g_2,g_3)$ is an elliptic curve.
\item [(b)] Given any elliptic curve ${\rm E}$, there exists $(g_2, g_3) \in \mathbb{C}^2 $, with $\Delta \neq 0$,
such that the curves ${\rm E}$ and ${\rm E}(g_2, g_3)$ are isomorphic as elliptic curves.
\end{itemize}
\end{theorem}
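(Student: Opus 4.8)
The plan is to handle the two parts separately: part~(a) reduces to a smoothness computation, while part~(b) rests on the classical uniformization of complex elliptic curves.

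\textbf{Part (a).} I would work in the affine chart $z=1$, where the curve is the zero locus of $f(x,y) := y^{2} - \bigl(4x^{3} - g_{2}x - g_{3}\bigr)$. A point of this affine curve is singular precisely when $f = f_{x} = f_{y} = 0$. From $f_{y} = 2y$ one gets $y = 0$, and then $f = 0$ forces $x$ to be a root of the cubic $p(x) := 4x^{3} - g_{2}x - g_{3}$, while $f_{x} = -p'(x)$. Hence a singular point exists if and only if $p$ and $p'$ share a root, i.e. if and only if $p$ has a multiple root. A direct computation of the discriminant of $p$ gives $\mathrm{disc}(p) = 16\,(g_{2}^{3} - 27 g_{3}^{2}) = 16\Delta$, so $\Delta \neq 0$ forces $p$ to be separable and the affine curve to be smooth. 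On the line at infinity the only point of ${\rm E}(g_{2},g_{3})$ is $[0:1:0]$, and passing to the chart $y=1$ one checks at once that this point is smooth as well. Thus ${\rm E}(g_{2},g_{3})$ is a smooth plane cubic; by the genus--degree formula it has genus one, and it carries the distinguished rational point $[0:1:0]$, so it is an elliptic curve.

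\textbf{Part (b).} By the uniformization theorem, every elliptic curve ${\rm E}$ over $\mathbb{C}$ is isomorphic, as a complex Lie group, to a torus $\mathbb{C}/\Lambda$ for some rank-two lattice $\Lambda \subset \mathbb{C}$. To $\Lambda$ I attach the Eisenstein values $g_{2} := 60 \sum_{\omega \in \Lambda \setminus \{0\}} \omega^{-4}$ and $g_{3} := 140 \sum_{\omega \in \Lambda \setminus \{0\}} \omega^{-6}$, and let $\wp = \wp_{\Lambda}$ be the associated Weierstrass function, so that $(\wp')^{2} = 4\wp^{3} - g_{2}\wp - g_{3}$. The map $z \mapsto [\wp(z) : \wp'(z) : 1]$, sending $0$ to $[0:1:0]$, then defines a holomorphic map $\mathbb{C}/\Lambda \to {\rm E}(g_{2},g_{3})$; using that $\wp$ has a double pole and $\wp'$ a triple pole at the origin, together with the addition theorem for $\wp$, one verifies in the standard way that this map is bijective and unramified, hence an isomorphism of elliptic curves. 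Since $\mathbb{C}/\Lambda$ is smooth, so is ${\rm E}(g_{2},g_{3})$, and therefore $\Delta = g_{2}^{3}-27 g_{3}^{2} \neq 0$ by part~(a); equivalently one has $\Delta = 16 \prod_{i<j}(e_{i}-e_{j})^{2}$ with $e_{i} = \wp(\omega_{i}/2)$ the three distinct half-period values. Composing with ${\rm E} \cong \mathbb{C}/\Lambda$ yields the desired isomorphism ${\rm E} \cong {\rm E}(g_{2},g_{3})$.

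The only substantial input is the uniformization of complex elliptic curves (equivalently, the surjectivity of $\tau \mapsto j(\tau)$ on the upper half-plane), which I would cite rather than reprove. Within the argument itself, the step demanding the most care is the verification that the $\wp$-parametrization is an isomorphism --- injectivity together with the absence of ramification --- alongside the discriminant identity underpinning the smoothness criterion of part~(a); both are classical and essentially routine.
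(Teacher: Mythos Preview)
Your proof is correct and is the standard classical argument. Note, however, that the paper does not actually prove this theorem: it is stated without proof as the classical Weierstrass normal form for complex elliptic curves, serving purely as motivation and template for the paper's genuinely new result, Theorem~\ref{theonormalform}, on ${\rm N}$-polarized K3 surfaces. So there is no ``paper's own proof'' to compare against; your write-up simply fills in the well-known details that the authors (reasonably) took for granted.
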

\noindent Our first result in this paper is analogus to the above.
\begin{theorem}
\label{theonormalform}
Let $(\alpha, \beta, \gamma, \delta)$ be a quadruple of complex numbers. Denote by 
${\rm X}(\alpha,\beta, \gamma, \delta)$ the minimal resolution of the surface in $\mathbb{P}^3(x,y,z,w)$ 
cut out by the degree-four homogeneous equation:
\begin{equation}
\label{nooform}
y^2zw - 4 x^3z + 3 \alpha xzw^2 +\beta zw^3 + \gamma xz^2w
- \frac{1}{2} ( \delta z^2 w^2 + w^4 ) \ = \ 0 \ .
\end{equation}
\begin{itemize}
\item [(a)] If $\gamma \neq 0$ or $\delta \neq 0$, then ${\rm X}(\alpha,\beta,\gamma, \delta)$ is a K3 surface
endowed with a canonical ${\rm N}$-polarization. 
\item [(b)] Given any ${\rm N}$-polarized K3 surface ${\rm X}$, there exists $(\alpha,\beta, \gamma, \delta) \in \mathbb{C}^4 $,
with $\gamma \neq 0$ or $\delta \neq 0$, such that surfaces ${\rm X}$ and ${\rm X}(\alpha,\beta, \gamma, \delta)$ are isomorphic as 
${\rm N}$-polarized K3 surfaces.
\end{itemize}
\end{theorem}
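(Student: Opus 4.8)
The plan is to derive both assertions from a single geometric fact: the surface ${\rm X}(\alpha,\beta,\gamma,\delta)$ carries a distinguished elliptic fibration over $\mathbb{P}^1$ admitting a section, one singular fibre of Kodaira type ${\rm II}^*$ (whose non-identity components form an $E_8$ configuration) and one of type ${\rm III}^*$ (an $E_7$ configuration); conversely, every ${\rm N}$-polarized K3 surface carries such a fibration, and bringing the fibration into Weierstrass form reproduces equation (\ref{nooform}). This is the natural analogue of the classical picture, where the relevant structure on an elliptic curve is a degree-three plane model together with a rational point.

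For part (a), I would argue in three steps. First, a local computation at the two points $[0:1:0:0]$ and $[0:0:1:0]$ of the quartic — both of which lie on the line $\{x=w=0\}$ contained in the surface — shows that, whenever $\gamma\neq 0$ or $\delta\neq 0$, the quartic (\ref{nooform}) is irreducible, has only isolated singularities, and these are rational double points; the hypothesis on $(\gamma,\delta)$ enters precisely here, since it is what keeps $[0:0:1:0]$ an ordinary double point (rather than a triple point) of the quartic. Hence the minimal resolution ${\rm X}(\alpha,\beta,\gamma,\delta)$ is a K3 surface. Second, the rational map $[x:y:z:w]\mapsto[z:w]$ — equivalently, the pencil of hyperplanes through the line $\{z=w=0\}$ — induces an elliptic fibration, and, writing $t=z/w$ and rescaling $(x,y)$ by $(t^{-2},t^{-3})$, one brings it to the Weierstrass form
\[ y^2 \ = \ 4x^3 - (3\alpha t^4 + \gamma t^5)\,x + \tfrac{1}{2}t^5 - \beta t^6 + \tfrac{1}{2}\delta t^7 ; \]
Tate's algorithm then exhibits a fibre of type ${\rm II}^*$ over $t=0$, a fibre of type ${\rm III}^*$ over $t=\infty$ (degenerating, when $\gamma=0$, to a second ${\rm II}^*$ fibre, which still contains an $E_7$ configuration), and the zero section. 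Third, the classes of a general fibre, the zero section, and the non-identity components of the two special fibres span a sublattice of ${\rm NS}({\rm X}(\alpha,\beta,\gamma,\delta))$ isometric to ${\rm H}\oplus{\rm E}_8\oplus{\rm E}_7$; by the Shioda--Tate description of N\'eron--Severi lattices of elliptic K3 surfaces this sublattice is primitive, and it contains a big and nef class (for instance the zero section plus a suitable multiple of the fibre class), so the embedding is a lattice polarization, canonical because it is read off the fibration.

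For part (b), I would run this in reverse. Given an ${\rm N}$-polarized K3 surface $({\rm X},i)$, the primitive isotropic vector in $i({\rm H})$ determines an elliptic fibration, and a lattice-theoretic argument — based on Nikulin's theory of discriminant forms and analogous to what the authors use for ${\rm H}\oplus{\rm E}_8\oplus{\rm E}_8$-polarized surfaces, possibly after changing the elliptic fibration — shows that one may assume it has a section and reducible fibres realizing $i({\rm E}_8)$ and $i({\rm E}_7)$, namely a fibre of type ${\rm II}^*$ and one of type ${\rm III}^*$. Normalizing the ${\rm II}^*$ fibre to sit over $0$ and the ${\rm III}^*$ fibre over $\infty$, the vanishing orders imposed by Tate's algorithm force the Weierstrass model to have exactly the shape displayed above; absorbing the residual freedom — rescalings of the Weierstrass coordinates together with the automorphisms of $\mathbb{P}^1$ fixing $0$ and $\infty$ — and performing the birational change of variables into $\mathbb{P}^3$ that inverts the substitution of part (a) produces a quadruple $(\alpha,\beta,\gamma,\delta)$ with $\gamma\neq 0$ or $\delta\neq 0$ together with an isomorphism ${\rm X}\cong{\rm X}(\alpha,\beta,\gamma,\delta)$ which, intertwining the fibration structures, respects the ${\rm N}$-polarizations. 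Alternatively, the first step here could be replaced by invoking surjectivity of the period map for ${\rm N}$-polarized K3 surfaces together with the Torelli theorem for lattice-polarized K3 surfaces, after checking that the periods of the family ${\rm X}(\alpha,\beta,\gamma,\delta)$ fill out the period domain.

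The local singularity analysis, the bookkeeping inside Tate's algorithm, and the final coordinate matching are routine. The genuine obstacle is the first step of part (b): guaranteeing that the purely lattice-theoretic ${\rm N}$-polarization is realized by a \emph{geometric} elliptic model carrying honest ${\rm II}^*$ and ${\rm III}^*$ fibres, rather than merely having an $E_8\oplus E_7$ sitting abstractly inside the frame lattice; since the Picard rank may be $17$, $18$, $19$ or $20$, one must control the possible additional reducible fibres and Mordell--Weil contributions. A secondary point, in part (a), is verifying both the primitivity of the spanned sublattice and the presence of a pseudo-ample class in its image, as these are exactly what make the asserted canonical ${\rm N}$-polarization well defined.
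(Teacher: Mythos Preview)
Your approach is correct in outline and, for part (b), genuinely different from the paper's. One small slip: the singularity of the quartic at $[0:0:1:0]$ is not an ordinary double point but rather an $A_5$ (when $\gamma\neq 0$) or $E_6$ (when $\gamma=0$) rational double point, and $[0:1:0:0]$ is an $A_{11}$. The hypothesis $\gamma\neq 0$ or $\delta\neq 0$ does keep the multiplicity at $[0:0:1:0]$ equal to two (so the singularity is a rational, rather than elliptic, double point), but ``ordinary'' is the wrong word. This does not affect your argument.

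For part (a) the paper takes a route close to yours but with a different emphasis: rather than passing to the Weierstrass form and invoking Tate's algorithm, it resolves the two singular points directly, draws the resulting dual graphs of rational curves (the $A_{11}$ and $A_5$/$E_6$ trees together with the proper transforms of the two lines $x=w=0$ and $z=w=0$), and reads off the ${\rm E}_8\oplus{\rm E}_7$ sub-configuration by inspection. The elliptic fibration you use is presented afterwards as the \emph{standard fibration} $\varphi^{\rm s}_{\rm X}$, and the paper remarks that it gives an alternate route to the polarization. So the two arguments see the same curves; yours packages the bookkeeping inside Tate's algorithm, the paper's inside an explicit resolution.

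For part (b) the difference is substantial. The paper does \emph{not} argue directly. Instead it proves (b) as a corollary of its main theorem: it computes the inverse of the period map $\mathcal{P}_{\rm N}\to\mathcal{F}_2$ explicitly in terms of Siegel modular forms $\mathcal{E}_4,\mathcal{E}_6,\mathcal{C}_{10},\mathcal{C}_{12}$ (via the geometric two-isogeny with the Kummer surface of the dual abelian surface), concludes that this map is an isomorphism, and hence that every ${\rm N}$-polarized K3 surface arises in the family. Your direct route --- cite the known equivalence between ${\rm N}$-polarizations and jacobian elliptic fibrations with ${\rm II}^*+{\rm III}^*$ (or ${\rm II}^*$) fibres, then normalize the Weierstrass model --- is exactly what the paper acknowledges is available from \cite{clingher3, kondo, shapiro}, and it is much shorter. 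What the paper's longer route buys is the explicit modular description of the inverse period map, which is their real goal; your argument proves surjectivity but says nothing about periods. Your ``alternative'' at the end --- surjectivity of periods plus checking the family fills the domain --- is precisely the paper's strategy.
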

\noindent The quartic $(\ref{nooform})$ extends a two-parameter family of K3 surfaces given by Inose in \cite{inose1}. In the context of $(\ref{nooform})$, 
the special case $\gamma=0$ corresponds to the situation when the polarization extends to the lattice ${\rm H} \oplus {\rm E}_8 \oplus {\rm E}_8$, 
whereas the ${\rm N}$-polarizations of K3 surfaces ${\rm X}(\alpha,\beta, \gamma, \delta)$ with $\gamma \neq 0$ cannot be extended to 
${\rm H} \oplus {\rm E}_8 \oplus {\rm E}_8$. 
\par As it turns out, the normal forms $(\ref{nooform})$ are also ideal objects for establishing a moduli space for isomorphism classes of 
${\rm N}$-polarizations of K3 surfaces. Again let us first recall the classical case of Weierstrass elliptic curves.
\begin{theorem}
Two curves ${\rm E}(g_2,g_3)$ and ${\rm E}(g_2', g_3')$ are isomorphic as elliptic curves if and only if there exists $t \in \mathbb{C}^*$ such that: 
$$
\left ( \ g_2', \ g_3'\ \right) \ = \
\left ( \ t^{2} g_2, \ t^{3} g_3 \ \right ) . $$
The open variety:
$$ \mathcal{M}_{{\rm E}} \ = \ \left \{ \ [ \ g_2,g_3 \ ] \in {\mathbb W}{\mathbb P}^2(2,3) \ \middle \vert \ \Delta \neq 0 \ \right\} $$
forms a coarse moduli space for elliptic curves.
\end{theorem}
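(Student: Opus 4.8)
The plan is to establish the two assertions — the isomorphism criterion and the coarse-moduli property — by the classical route through the $j$-invariant. For the ``if'' direction of the criterion, given $t\in\mathbb{C}^*$ I would fix $s\in\mathbb{C}^*$ with $s^2=t$ and check by direct substitution that the projective transformation $[x:y:z]\mapsto[s^2x:s^3y:z]$ of $\mathbb{P}^2(x,y,z)$ carries the curve $y^2z-4x^3+g_2xz^2+g_3z^3=0$ onto $y^2z-4x^3+t^2g_2\,xz^2+t^3g_3\,z^3=0$ while fixing the base point $[0:1:0]$; it is therefore an isomorphism of elliptic curves, so ${\rm E}(g_2,g_3)$ and ${\rm E}(t^2g_2,t^3g_3)$ are isomorphic. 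For the converse, introduce $j(g_2,g_3):=1728\,g_2^3/\Delta$, which is invariant under $(g_2,g_3)\mapsto(t^2g_2,t^3g_3)$ because $\Delta$ scales by $t^6$, and invoke the classical fact that two elliptic curves over $\mathbb{C}$ are isomorphic precisely when their $j$-invariants agree; it then suffices to deduce from $j(g_2,g_3)=j(g_2',g_3')$ the existence of a $t\in\mathbb{C}^*$ with $(g_2',g_3')=(t^2g_2,t^3g_3)$. Clearing denominators in $g_2^3/\Delta=(g_2')^3/\Delta'$, legitimate since $\Delta,\Delta'\neq0$, yields the polynomial identity $g_2^3(g_3')^2=(g_2')^3g_3^2$, and the rest is a short case analysis.

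This identity, together with $\Delta,\Delta'\neq0$, forces $g_2=0\iff g_2'=0$ and $g_3=0\iff g_3'=0$ while precluding $g_2=g_3=0$, so exactly one of three cases arises. If all four quantities are nonzero, the identity reads $(g_2'/g_2)^3=(g_3'/g_3)^2$; choosing $t_0$ with $t_0^2=g_2'/g_2$ gives $t_0^6=(g_3'/g_3)^2$, hence $t_0^3=\pm g_3'/g_3$, and replacing $t_0$ by $-t_0$ if necessary produces the required $t$. If $g_2=g_2'=0$ (so $g_3,g_3'\neq0$), any cube root of $g_3'/g_3$ works for $t$; symmetrically, if $g_3=g_3'=0$, any square root of $g_2'/g_2$ works. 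This completes the isomorphism criterion.

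For the moduli statement, note first that $\Delta(t^2g_2,t^3g_3)=t^6\Delta(g_2,g_3)$, so $\{\Delta\neq0\}$ is a $\mathbb{C}^*$-invariant open locus and $\mathcal{M}_{{\rm E}}$ is a genuine open subvariety of ${\mathbb W}{\mathbb P}^2(2,3)$ — in fact the complement of a single point in a smooth rational curve, hence isomorphic to $\mathbb{C}$. By part (b) of the first theorem above every elliptic curve has a model ${\rm E}(g_2,g_3)$ with $\Delta\neq0$, and part (a) together with the isomorphism criterion shows that $[g_2:g_3]\mapsto(\text{isomorphism class of }{\rm E}(g_2,g_3))$ is a bijection from $\mathcal{M}_{{\rm E}}$ onto the set of isomorphism classes of complex elliptic curves. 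To upgrade this to the coarse-moduli property, observe that the weight-zero function $j$ defines a morphism $\mathcal{M}_{{\rm E}}\to\mathbb{C}$ which is injective by the isomorphism criterion and surjective because every complex number occurs as the $j$-invariant of an elliptic curve (hence, by part (b), of a Weierstrass model); being a bijective morphism between varieties isomorphic to the affine line, it is necessarily an isomorphism, so $\mathcal{M}_{{\rm E}}$ is identified with the affine $j$-line, which is the classical coarse moduli space of elliptic curves — the $j$-invariant of any algebraic family of elliptic curves over a base $S$ being a regular function on $S$. Hence $\mathcal{M}_{{\rm E}}$ is a coarse moduli space for elliptic curves.

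The only genuinely delicate point I anticipate is the handling of the degenerate orbits, those with $j\in\{0,1728\}$, where exactly one of $g_2,g_3$ vanishes and a naive ``divide through'' manipulation would break down; the homogeneous identity $g_2^3(g_3')^2=(g_2')^3g_3^2$ is precisely what lets these be dispatched cleanly alongside the generic case. Should one wish to avoid quoting the classification of complex elliptic curves by $j$, the converse direction can instead be obtained from the rigidity of the Weierstrass form — any isomorphism of elliptic curves is induced, through the complete linear system of three times the origin, by a linear automorphism of $\mathbb{P}^2$, and one checks that such an automorphism carrying an equation of the above shape to another of the same shape must be of the form ${\rm diag}(s^2,s^3,1)$ — at the cost of some extra bookkeeping, which is where the real work would then lie.
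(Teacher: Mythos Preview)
Your proof is correct and follows the standard classical route via the $j$-invariant. Note, however, that the paper does not actually prove this theorem: it is stated in the introduction purely as classical background (``let us first recall the classical case of Weierstrass elliptic curves'') to motivate the analogous results for ${\rm N}$-polarized K3 surfaces, and no proof is given or needed there. So there is nothing to compare against; your argument is simply the expected textbook verification.
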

\noindent In the above context, the {\em $j$-invariant}
$${\rm j}({\rm E}) := \frac{g_2^3}{\Delta}$$
identifies $\mathcal{M}_{{\rm E}}$ and $\mathbb{C}$ (the ``$j$-line'').  The period map to the classifying space of Hodge structures is the isomorphism
of quasi-projective varieties:
\begin{equation}
{\rm per} \colon \mathcal{M}_{{\rm E}} \ \rightarrow \ \mathcal{F}_1 = {\rm PSL}_2(\mathbb{Z}) \backslash {\mathbb H}
\end{equation}
whose inverse is given by:
$$ {\rm per}^{-1} \ = \ \left [ \ 60 {\rm E}_4, \ 140 {\rm E}_6 \ \right ]$$
where ${\rm E}_4, {\rm E}_6 \ : \ {\mathbb H} \rightarrow {\mathbb C}$ are the classical Eisenstein series of weights four and six, respectively.
\par In our ${\rm N}$-polarized K3 surface setting, we have following analogous result:
\begin{theorem}
\label{theomoduli}
Two K3 surfaces ${\rm X}(\alpha_1,\beta_1, \gamma_1, \delta_1)$ and ${\rm X}(\alpha_2,\beta_2, \gamma_2, \delta_2)$ 
are isomorphic as ${\rm N}$-polarized K3 surfaces if and only if there exists $t \in \mathbb{C}^*$ such that: 
$$ 
\left (  \ \alpha_2, \ \beta_2, \ \gamma_2, \ \delta_2\ \right ) \ = \ 
\left ( \ t^2 \alpha_1, \ t^3 \beta_1, \ t^5 \gamma_1, \ t^6 \delta_1 \ \right ) . $$
The open variety:
$$ \mathcal{M}^{{\rm N}}_{{\rm K3}} \ = \ \left \{ \
[ \ \alpha,\beta,\gamma, \delta \ ] \in  {\mathbb W}{\mathbb P}^3(2,3,5,6) \ \middle \vert \ \gamma \neq 0 \ {\rm or} \ \delta \neq 0
\ \right \} $$
forms a coarse moduli space for ${\rm N}$-polarized K3 surfaces.
\end{theorem}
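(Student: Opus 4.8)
\noindent The plan is to establish the two directions of the equivalence separately and then to read off the coarse moduli statement from Theorem~\ref{theonormalform} together with a relative version of the normal form.

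\textbf{The ``if'' direction.} For $t\in\mathbb{C}^{*}$, fix a square root $t^{1/2}$ and let $\varphi_{t}$ be the diagonal projective transformation of $\mathbb{P}^{3}(x,y,z,w)$ given by $[x:y:z:w]\mapsto[\,t^{2}x:t^{3/2}y:t^{6}z:t^{3}w\,]$. Substituting into $(\ref{nooform})$ and matching the seven monomials, one checks that $\varphi_{t}$ carries the quartic with parameters $(\alpha,\beta,\gamma,\delta)$ onto the quartic with parameters $(t^{2}\alpha,t^{3}\beta,t^{5}\gamma,t^{6}\delta)$, the defining equation changing by the overall factor $t^{12}$; in particular the choice of square root is immaterial, since $y$ occurs squared. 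This biregular map of quartic surfaces lifts canonically to the minimal resolutions, and being projective-linear it carries the elliptic pencil, its two distinguished reducible fibres, and the distinguished section---the data cutting out the canonical ${\rm N}$-polarization of Theorem~\ref{theonormalform}(a)---on the source onto the analogous data on the target. Hence $\varphi_{t}$ induces an isomorphism of ${\rm N}$-polarized K3 surfaces; since the locus $U=\{\,\gamma\neq0\ \text{or}\ \delta\neq0\,\}\subset\mathbb{C}^{4}$ is invariant under the weighted scaling, this direction follows.

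\textbf{The ``only if'' direction} is the crux. On any ${\rm N}$-polarized K3 surface $({\rm X},i)$, the hyperbolic summand ${\rm H}\subset{\rm N}$ singles out a Jacobian elliptic fibration ${\rm X}\to\mathbb{P}^{1}$---the one whose fibre class and zero section span $i({\rm H})$---and the ${\rm E}_{8}$ and ${\rm E}_{7}$ summands force it to carry a Kodaira fibre of type ${\rm II}^{*}$ and one of type ${\rm III}^{*}$ over two distinguished points of the base. A direct computation shows that, after normalizing these two points to $\infty$ and $0$ and passing to the affine coordinate $t=w/z$, the substitution $X=xt$, $Y=yt^{2}$ turns $(\ref{nooform})$ (cleared of $t^{3}$) into the Weierstrass equation $Y^{2}=4X^{3}-g_{2}(t)X-g_{3}(t)$ with $g_{2}(t)=t^{3}(3\alpha t+\gamma)$ and $g_{3}(t)=-\tfrac12\,t^{5}(t^{2}-2\beta t+\delta)$---a fibration with exactly the prescribed fibre types. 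An isomorphism of ${\rm N}$-polarized K3 surfaces respects all of this structure, hence descends to an automorphism of the base fixing the pair $\{0,\infty\}$, accompanied by the usual rescaling $(g_{2},g_{3})\mapsto(u^{-4}g_{2},u^{-6}g_{3})$ coming from $(x,y)\mapsto(u^{2}x,u^{3}y)$. When $\gamma\neq0$ the two special fibres have distinct types, so the base automorphism is $t\mapsto\lambda t$; imposing that the transformed $g_{3}$ again have leading coefficient $-\tfrac12$ couples the two scalings by $u^{6}=\lambda^{7}$, and the residual one-parameter group acts on $(\alpha,\beta,\gamma,\delta)$ exactly with the weights $(2,3,5,6)$ (the case $\gamma=0$, where the base swap $t\mapsto\lambda/t$ is also allowed, is a routine variant that yields the same orbits). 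This produces the required $t\in\mathbb{C}^{*}$. The main obstacle I anticipate is precisely this rigidity step: one must argue that an ${\rm N}$-polarized isomorphism induces an honest isomorphism of the two distinguished elliptic fibrations---so that the only ambiguity in the presentation $(\ref{nooform})$ is base- and Weierstrass-rescaling---and the exact shape of the two special fibres, dictated by the lattice ${\rm N}$, is what rules out any other identification of the parameters.

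\textbf{The coarse moduli statement.} By Theorem~\ref{theonormalform}(a), $(\alpha,\beta,\gamma,\delta)\mapsto{\rm X}(\alpha,\beta,\gamma,\delta)$ defines a map from $U$ to the set of isomorphism classes of ${\rm N}$-polarized K3 surfaces; by Theorem~\ref{theonormalform}(b) it is surjective, and by the equivalence just established its fibres are exactly the orbits of the weighted $\mathbb{C}^{*}$-action on $U$. Thus $[\alpha:\beta:\gamma:\delta]\mapsto{\rm X}(\alpha,\beta,\gamma,\delta)$ is a bijection between the points of $\mathcal{M}^{{\rm N}}_{{\rm K3}}=U/\mathbb{C}^{*}\subset{\mathbb W}{\mathbb P}^{3}(2,3,5,6)$ and isomorphism classes. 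To upgrade this to the coarse-moduli property, I would use that the passage to the normal form is analytic and works in families: given any family $\mathcal{X}\to B$ of ${\rm N}$-polarized K3 surfaces one can, locally on $B$, produce holomorphic functions $(\alpha,\beta,\gamma,\delta)$ on $B$ valued in $U$, hence a holomorphic classifying map $B\to\mathcal{M}^{{\rm N}}_{{\rm K3}}$, which by the equivalence is independent of the local choices and therefore glues. Since the tautological family over $U$ induces the quotient morphism $U\to\mathcal{M}^{{\rm N}}_{{\rm K3}}$, the standard formal argument then shows $\mathcal{M}^{{\rm N}}_{{\rm K3}}$ is universal for such classifying maps, i.e.\ it is a coarse moduli space.
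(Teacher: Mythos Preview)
Your ``if'' direction is essentially the paper's own argument (Proposition in Section~\ref{specialfeat}): the projective scaling you wrote down is, after rescaling the homogeneous coordinates, exactly the automorphism $[x:y:z:w]\mapsto[q^{8}x:q^{9}y:z:q^{6}w]$ with $q^{2}=t^{-1}$.

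Your ``only if'' direction, however, follows a genuinely different route from the paper. The paper does \emph{not} argue directly via the Weierstrass model. Instead it first proves Theorem~\ref{theoinverseperiod} (equivalently Theorem~\ref{maincomp}) by an explicit computation with the geometric two-isogeny and theta constants, obtaining
\[
[\alpha:\beta:\gamma:\delta]\;=\;[\mathcal{E}_4:\mathcal{E}_6:2^{12}3^{5}\mathcal{C}_{10}:2^{12}3^{6}\mathcal{C}_{12}]
\]
as a function of the period point. Since by Igusa's theorem and the Global Torelli Theorem the right-hand side gives an isomorphism $\mathcal{F}_2\to\mathcal{P}_{\rm N}$, the composite $\mathcal{P}_{\rm N}\to\mathcal{M}^{\rm N}_{\rm K3}\to\mathcal{F}_2\to\mathcal{P}_{\rm N}$ is the identity; hence the first arrow is a bijection, which is precisely the ``only if'' statement and the coarse moduli assertion (stated in the paper as the Corollary following Theorem~\ref{maincomp}). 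Your argument, by contrast, bypasses Siegel modular forms entirely: you use that the ${\rm N}$-polarization pins down the standard fibration with its section and its ${\rm II}^*$ and ${\rm III}^*$ fibres, and then that isomorphisms of Weierstrass models over $\mathbb{P}^1$ are exhausted by base automorphisms and constant rescalings $(g_2,g_3)\mapsto(u^{-4}g_2,u^{-6}g_3)$. This is correct (and the weight computation you sketch does yield $(2,3,5,6)$ once one parametrizes the constraint $u^{6}=\lambda^{7}$), and it has the virtue of being elementary and self-contained. Two points deserve a line of justification in a polished write-up: that $u$ is constant (because there are no nonconstant units on $\mathbb{P}^1$), and that the ${\rm N}$-polarized isomorphism actually carries the section curve and the specific reducible fibres to their counterparts (this follows since each of the relevant $(-2)$-classes is represented by a unique effective curve). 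The paper's approach is heavier but yields strictly more: it simultaneously identifies the inverse period map.

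For the coarse moduli statement, the paper again takes the shortcut through $\mathcal{F}_2$: once $\mathcal{P}_{\rm N}\cong\mathcal{F}_2$ is established, the coarse moduli property is inherited from the known Hodge-theoretic description of $\mathcal{M}^{\rm N}_{\rm K3}$ (Section~\ref{modk3}, referencing \cite{asterix,dolga1}). Your direct approach via a relative normal form is reasonable in outline, but the step ``passage to the normal form is analytic and works in families'' is doing real work and would need to be justified---essentially a relative version of Theorem~\ref{theonormalform}(b), producing local holomorphic Weierstrass data with the correct fibre types. This is standard in spirit but not quite immediate.
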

\noindent In the context of Theorem $\ref{theomoduli}$, the period map to the associated classifying space of Hodge structures appears as 
a morphism of quasi-projective varieties:
\begin{equation}
\label{pperiodmap}
{\rm per} \colon \mathcal{M}^{{\rm N}}_{{\rm K3}}  \ \rightarrow \ \mathcal{F}_2  =  {\rm Sp}_4(\mathbb{Z}) \backslash {\mathbb H}_2.
\end{equation}
By the appropriate version of Global Torelli Theorem for lattice polarized K3 surfaces (see for instance \cite{dolga1}), one knows that 
$(\ref{pperiodmap})$ is in fact an isomorphism. We prove that the inverse of $(\ref{pperiodmap})$ has a simple description in terms of 
Siegel modular forms.   
\begin{theorem}
\label{theoinverseperiod}
The inverse period map $ {\rm per}^{-1} \colon \mathcal{F}_2 \rightarrow \mathcal{M}_{{\rm K3}} $ 
is given by:
$$ {\rm per}^{-1} \ = \ 
\left [ \ \mathcal{E}_4, \  \mathcal{E}_6, \ 2^{12}  3^{5}  \mathcal{C}_{10}, \ 2^{12} 3^{6}  \mathcal{C}_{12}  \ \right ]  $$ 
where $\mathcal{E}_4$, $\mathcal{E}_6$ are genus-two Eisenstein series of weight four and six, and $\mathcal{C}_{10}$ and $\mathcal{C}_{12}$ 
are Igusa's cusp forms of weight $10$ and $12$, respectively.
\end{theorem}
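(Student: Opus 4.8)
The plan is to realize both the source $\mathcal{F}_2$ and the target $\mathcal{M}^{{\rm N}}_{{\rm K3}}$ of $\,{\rm per}^{-1}$ as Zariski-open subsets of one and the same weighted projective space ${\mathbb W}{\mathbb P}^3(2,3,5,6)$, to show by a rigidity argument that ${\rm per}^{-1}$ is then forced to have the shape $[\,a\mathcal{E}_4:b\mathcal{E}_6:c\mathcal{C}_{10}:d\mathcal{C}_{12}\,]$ for constants $a,b,c,d$, and finally to pin down those four constants by restricting the whole picture to the Humbert locus, where it degenerates to the classical theory of the Weierstrass cubic recalled above.

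First I would assemble the framework. By Theorem~\ref{theomoduli} together with the Global Torelli theorem already invoked, ${\rm per}\colon\mathcal{M}^{{\rm N}}_{{\rm K3}}\to\mathcal{F}_2$ is an isomorphism of quasi-projective varieties, and $\mathcal{M}^{{\rm N}}_{{\rm K3}}$ is the open subvariety ${\mathbb W}{\mathbb P}^3(2,3,5,6)\setminus V(\gamma,\delta)$ of its natural compactification ${\mathbb W}{\mathbb P}^3(2,3,5,6)={\rm Proj}\,{\mathbb C}[\alpha,\beta,\gamma,\delta]$. On the Siegel side I would invoke Igusa's structure theorem: the graded ring of Siegel modular forms of even weight for ${\rm Sp}_4({\mathbb Z})$ is the free polynomial ring ${\mathbb C}[\mathcal{E}_4,\mathcal{E}_6,\mathcal{C}_{10},\mathcal{C}_{12}]$, with a form of weight $2k$ placed in degree $k$; hence the Baily--Borel compactification is $\overline{\mathcal{F}_2}={\rm Proj}\,{\mathbb C}[\mathcal{E}_4,\mathcal{E}_6,\mathcal{C}_{10},\mathcal{C}_{12}]\cong{\mathbb W}{\mathbb P}^3(2,3,5,6)$ via $[\mathcal{E}_4:\mathcal{E}_6:\mathcal{C}_{10}:\mathcal{C}_{12}]$, and by Igusa's description of the boundary the interior $\mathcal{F}_2$ is precisely the complement of the cusp locus $V(\mathcal{C}_{10},\mathcal{C}_{12})$. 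Under these two identifications, ${\rm per}^{-1}$ becomes an automorphism of $U:={\mathbb W}{\mathbb P}^3(2,3,5,6)\setminus V(\gamma,\delta)$.

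Second is the rigidity step. Since $V(\gamma,\delta)$ has codimension two in the normal variety ${\mathbb W}{\mathbb P}^3(2,3,5,6)$, an automorphism of $U$ extends to an automorphism of ${\mathbb W}{\mathbb P}^3(2,3,5,6)$: it fixes the ample generator of the class group, sections of the reflexive sheaves $\mathcal{O}(d)$ extend across the codimension-two locus, and so the automorphism is induced by a graded automorphism $\psi$ of ${\mathbb C}[\alpha,\beta,\gamma,\delta]$ preserving the prime ideal $(\gamma,\delta)$. The only monomials of degrees $2,3,5,6$ are $\alpha$; $\beta$; $\gamma$ and $\alpha\beta$; $\delta$, $\alpha^3$ and $\beta^2$, so $\psi$ is $\alpha\mapsto a\alpha$, $\beta\mapsto b\beta$, $\gamma\mapsto c\gamma+e\,\alpha\beta$, $\delta\mapsto d\delta+f\alpha^3+g\beta^2$; since $\alpha\beta,\alpha^3,\beta^2\notin(\gamma,\delta)$, preservation of that ideal forces $e=f=g=0$, and invertibility forces $a,b,c,d\in{\mathbb C}^\ast$. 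Transporting back, this shows
$$ {\rm per}^{-1}\ =\ \bigl[\ a\,\mathcal{E}_4\ :\ b\,\mathcal{E}_6\ :\ c\,\mathcal{C}_{10}\ :\ d\,\mathcal{C}_{12}\ \bigr], $$
where the quadruple $(a,b,c,d)$ is determined only up to the scaling $(a,b,c,d)\sim(t^2a,\,t^3b,\,t^5c,\,t^6d)$ that defines the weighted coordinates. In particular the answer is forced to carry genus-two Eisenstein series in its first two slots and Igusa cusp forms in its last two; only the four constants remain.

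The last step — the determination of $(a,b,c,d)$, where the precise coefficients built into the normal form $(\ref{nooform})$ finally matter, and which I expect to be the genuine obstacle — I would carry out over the Humbert surface ${\mathcal H}_1\subset\mathcal{F}_2$, i.e.\ over the divisor $\{\gamma=0\}$ of $\mathcal{M}^{{\rm N}}_{{\rm K3}}$, on which the polarization extends to ${\rm H}\oplus{\rm E}_8\oplus{\rm E}_8$ and ${\rm X}(\alpha,\beta,0,\delta)$ is the Inose-type surface \cite{inose1} attached to an unordered pair of elliptic curves ${\rm E}(\tau_1),{\rm E}(\tau_2)$. On the one hand, one reads off $(\alpha,\beta,\delta)$ from $(\ref{nooform})$ as explicit symmetric functions of the Weierstrass data $g_2({\rm E}(\tau_i)),g_3({\rm E}(\tau_i))$, hence — via ${\rm j}=g_2^3/\Delta$ — as explicit symmetric functions of $j(\tau_1),j(\tau_2)$. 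On the other hand, along the diagonal period matrix ${\rm diag}(\tau_1,\tau_2)$ one has $\mathcal{E}_4={\rm E}_4(\tau_1){\rm E}_4(\tau_2)$ and $\mathcal{E}_6={\rm E}_6(\tau_1){\rm E}_6(\tau_2)$ (each is the unique form of bi-weight $(4,4)$, respectively $(6,6)$, for ${\rm SL}_2({\mathbb Z})\times{\rm SL}_2({\mathbb Z})$ with constant term $1$), $\mathcal{C}_{10}\equiv 0$, and $\mathcal{C}_{12}$ equals a definite nonzero multiple of $\Delta(\tau_1)\Delta(\tau_2)$. Matching the two parametrizations of $[\alpha:\beta:0:\delta]$ over ${\mathcal H}_1$ determines the ratios of $a^3,b^2,d$ in terms of the computable coefficients of $(\ref{nooform})$ and of Igusa's constant for $\mathcal{C}_{12}|_{{\rm diag}}$; with the coefficients as in $(\ref{nooform})$ these come out so that $a$ and $b$ can be normalized to $1$ simultaneously and $d=2^{12}3^{6}$, and the last constant $c=2^{12}3^{5}$ follows either from a transversality comparison of the zero divisors of $\gamma$ and of $\mathcal{C}_{10}$ along ${\mathcal H}_1$, or from a second specialization on which $\mathcal{C}_{10}$ does not vanish. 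The real difficulty throughout is bookkeeping the various normalizations — of the Eisenstein and Igusa forms, of the Weierstrass constants, and of the diagonal-restriction identities — consistently enough that the exact powers of $2$ and $3$ emerge; as a sanity check, both sides can be evaluated at the CM points ${\rm E}(\tau)\times{\rm E}(\tau)$ with $j(\tau)\in\{0,1728\}$, where every quantity in sight is elementary.
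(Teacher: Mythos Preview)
Your approach is genuinely different from the paper's. The paper does not argue by rigidity at all; instead it carries out an explicit geometric computation. For $\gamma\neq 0$ it realizes the alternate elliptic fibration $\varphi^{\rm a}_{{\rm X}}$ on ${\rm X}(\alpha,\beta,\gamma,\delta)$ in the form $y_1^2=z_1^3+\mathcal{P}_{\rm X}(\mu)z_1^2+\mathcal{Q}_{\rm X}(\mu)z_1$, applies the Van Geemen--Sarti two-isogeny to obtain an explicit Weierstrass form for the induced fibration $\varphi_{\rm Y}$ on the Kummer partner ${\rm Y}$, and then \emph{independently} recomputes $\varphi_{\rm Y}$ starting from the Hudson quartic model of ${\rm Km}({\rm Jac}({\rm C}))$ in terms of the four fundamental theta constants $a,b,c,d$. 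This second computation passes through an explicit pencil of plane quintics (whose coefficients fill an appendix). Matching the two Weierstrass forms yields $(\alpha,\beta,\gamma,\delta)$ as explicit polynomials $P_8,P_{12},Q_{20},Q_{24}$ in $a,b,c,d$, which are then identified with Igusa's $\mathcal{E}_4,\mathcal{E}_6,\mathcal{C}_{10},\mathcal{C}_{12}$. All four constants, including the one in front of $\mathcal{C}_{10}$, fall out simultaneously from this matching.

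Your rigidity argument is sound: $\mathcal{F}_2$ and $\mathcal{M}^{\rm N}_{{\rm K3}}$ are both the complement of $V(\text{weight-}5,\text{weight-}6)$ in ${\mathbb W}{\mathbb P}^3(2,3,5,6)$, the class group is $\mathbb Z$ with ample generator preserved, sections of $\mathcal{O}(d)$ extend across the codimension-two locus, and the resulting graded automorphism of $\mathbb C[\alpha,\beta,\gamma,\delta]$ preserving $(\gamma,\delta)$ is indeed diagonal. This is an elegant reduction and buys you the \emph{shape} of the answer essentially for free, whereas the paper has to earn it through pages of theta-constant manipulation. The restriction to $\mathcal H_1$ to pin down $a,b,d$ via the already-established ${\rm H}\oplus{\rm E}_8\oplus{\rm E}_8$ case is also legitimate.

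The genuine gap is the determination of $c$, and I don't think either of your proposed fixes closes it. Restricting to $\mathcal H_1$ gives $0=0$ in the third slot, so no information on $c$. Your ``transversality comparison'' would compare $d\gamma$ and $c\cdot d\mathcal{C}_{10}$ as sections of the conormal bundle of $\mathcal H_1$, but $d((\mathrm{per}^{-1})^*\gamma)=(\mathrm{per}^{-1})^*d\gamma$, so you would need the derivative of the period map in the direction \emph{transverse} to $\mathcal H_1$ --- precisely the information that the ${\rm M}$-polarized theory does not supply. Your ``second specialization on which $\mathcal{C}_{10}$ does not vanish'' is the right idea in principle, but you have not identified any locus (e.g.\ $\mathcal{H}_4$, or a CM point with $\mathcal{C}_{10}\neq 0$) where the inverse period map is independently known; your proposed sanity checks at $j\in\{0,1728\}$ still lie on $\mathcal H_1$. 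Obtaining even one such evaluation is essentially equivalent to carrying out, at least at a point, the explicit Kummer-side computation that constitutes the paper's actual proof. So your strategy reduces the theorem to determining a single constant, but determining that constant still requires input of the same nature --- if lesser extent --- as what the paper provides.
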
 
\noindent Theorems $\ref{theonormalform}$, $\ref{theomoduli}$ and $\ref{theoinverseperiod}$ allow one to give an explicit description of the 
dual principally polarized abelian surfaces associated by the Hodge-theoretic correspondence $(\ref{basiccor})$. In the case of K3 surfaces 
polarized by the lattice ${\rm H} \oplus {\rm E}_8 \oplus {\rm E}_8$, explicit formulas were given previously by the authors \cite{clingher3} 
as well as Shioda \cite{shioda06}. 
\begin{theorem}
\label{tthe8e8}
Under the duality correspondence $(\ref{basiccor})$, the principally polarized abelian surface ${\rm A}$ associated to $ {\rm X}(\alpha,\beta, 0, \delta)$ is given by 
$$ \left ( \ {\rm E}_1 \times {\rm E}_2, \ \mathcal{O}_{{\rm E}_1 \times {\rm E}_2} \left ( {\rm E}_1 + {\rm E}_2 \right ) \   \right ) $$ 
where ${\rm E}_1$ and ${\rm E}_2$ are complex elliptic curves with j-invariants satisfying:
$$ {\rm j}({\rm E}_1) + {\rm j}({\rm E}_2) \ = \ \frac{\alpha^3-\beta^2}{\delta} + 1, \ \ \ 
{\rm j}({\rm E}_1) \cdot {\rm j}({\rm E}_2) \ = \ \frac{\alpha^3}{\delta}.
$$
\end{theorem}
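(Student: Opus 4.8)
The plan is to connect the normal form $\Cc$ of Theorem~\ref{theonormalform} with $\gamma = 0$ directly to the known $\rm M$-polarized case, since $\gamma = 0$ is precisely the condition under which the polarization extends from $\rm N$ to $\rm M = {\rm H} \oplus {\rm E}_8 \oplus {\rm E}_8$. First I would recall from the authors' earlier work \cite{clingher3} the explicit parametrization of $\rm M$-polarized K3 surfaces: such a surface is determined by a pair of parameters, classically taken to be the elementary symmetric functions of two points on the $j$-line, and its associated principally polarized abelian surface under $(\ref{basiccor})$ is the product $\Ee_1 \times \Ee_2$ with the product polarization $\mathcal{O}_{\Ee_1 \times \Ee_2}(\Ee_1 + \Ee_2)$. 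The core of the argument is then a \emph{matching of moduli coordinates}: I would show that the one-parameter degeneration $\Cc(\alpha, \beta, 0, \delta)$ of the quartic $(\ref{nooform})$ recovers exactly the Clingher--Doran (equivalently Inose) family, and I would track how the weighted-projective coordinates $[\alpha : \beta : \delta] \in {\mathbb W}{\mathbb P}(2,3,6)$ of Theorem~\ref{theomoduli} correspond to the standard invariants of that family.

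The key steps, in order, are as follows. (1) Specialize equation $(\ref{nooform})$ to $\gamma = 0$ and verify that the resulting quartic, after a suitable change of variables, is projectively equivalent to the Inose/Clingher--Doran quartic for $\rm M$-polarized K3 surfaces; this uses part (a) of Theorem~\ref{theonormalform} together with the remark in the text that $\gamma = 0$ forces the extension of the polarization to $\rm M$. (2) Import from \cite{clingher3} (or \cite{shioda06}) the statement that the abelian surface attached to that family by $(\ref{basiccor})$ is $\Ee_1 \times \Ee_2$ with the stated polarization, together with the precise dictionary expressing $\rm j(\Ee_1)$, $\rm j(\Ee_2)$ in terms of the family parameters. (3) Compose the change of variables from step (1) with that dictionary to obtain $\rm j(\Ee_1) + \rm j(\Ee_2)$ and $\rm j(\Ee_1)\cdot\rm j(\Ee_2)$ as rational functions of $\alpha, \beta, \delta$, and check they equal $(\alpha^3 - \beta^2)/\delta + 1$ and $\alpha^3/\delta$ respectively. (4) Finally, verify compatibility with the weighted $\Cee^*$-action: under $(\alpha, \beta, \delta) \mapsto (t^2\alpha, t^3\beta, t^6\delta)$ both $(\alpha^3-\beta^2)/\delta$ and $\alpha^3/\delta$ are invariant of weight zero, so the formulas descend to ${\mathbb W}{\mathbb P}(2,3,6)$ and are well defined on $\mathcal{M}^{\rm M}_{\rm K3} \subset \mathcal{M}^{\rm N}_{\rm K3}$, matching the symmetric-function structure of the unordered pair $\{\Ee_1, \Ee_2\}$.

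An alternative, more self-contained route avoids citing the explicit $\rm M$-polarized formulas and instead works directly through periods: by Theorem~\ref{theoinverseperiod}, the locus $\gamma = 0$ in $\mathcal{M}^{\rm N}_{\rm K3}$ is the preimage under $\rm per$ of the vanishing locus of $\mathcal{C}_{10}$ in $\mathcal{F}_2$, which is exactly the Humbert surface $\mathcal{H}_1$ parametrizing products of elliptic curves. On $\mathcal{H}_1$ one has $\tau = \begin{pmatrix} \tau_1 & 0 \\ 0 & \tau_2 \end{pmatrix}$, and the genus-two Eisenstein series $\mathcal{E}_4, \mathcal{E}_6$ and the remaining Igusa form $\mathcal{C}_{12}$ factor through the classical one-variable Eisenstein series $\Ee_4(\tau_i), \Ee_6(\tau_i)$ by the standard restriction formulas for Siegel modular forms to the diagonal. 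Substituting these factorizations into the coordinate expressions $[\mathcal{E}_4 : \mathcal{E}_6 : 0 : 2^{12}3^6 \mathcal{C}_{12}]$ and using the classical $j$-invariant formula $\rm j = \Ee_4^3/(\Ee_4^3 - \Ee_6^2)$ for each factor expresses $\alpha, \beta, \delta$ symmetrically in $\rm j(\Ee_1), \rm j(\Ee_2)$, from which the two displayed identities follow by a direct symmetric-function computation.

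The main obstacle will be step (1)/the restriction formulas: pinning down the exact projective change of variables that identifies the $\gamma = 0$ slice of $(\ref{nooform})$ with the previously studied $\rm M$-polarized normal form (or, in the period-theoretic approach, getting the normalizing constants in the diagonal-restriction formulas for $\mathcal{E}_4, \mathcal{E}_6, \mathcal{C}_{12}$ exactly right, including the powers of $2$ and $3$ appearing in Theorem~\ref{theoinverseperiod}). Everything after that is a bookkeeping exercise in symmetric functions of two $j$-invariants, but the constant-matching has to be done carefully because the asserted formulas have the clean shifts ``$+1$'' and the denominators $\delta$ with coefficient exactly one, which is where any scaling error would show up.
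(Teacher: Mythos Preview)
Your primary approach (steps (1)--(4)) is correct and is exactly how the paper handles Theorem~\ref{tthe8e8}: the result is not proved anew here but is imported from the authors' earlier work \cite{clingher3} (and Shioda \cite{shioda06}) on the $\rm M$-polarized case, the quartic $(\ref{nooform})$ with $\gamma=0$ being precisely the Inose family treated there.

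Your alternative period-theoretic route via Theorem~\ref{theoinverseperiod} is a genuinely different argument, and it works, but you should flag a circularity hazard: in this paper the proof of Theorem~\ref{theoinverseperiod} (equivalently Theorem~\ref{maincomp}) is carried out only for $\gamma\neq 0$, with the $\gamma=0$ case explicitly deferred to \cite{clingher3}. So invoking Theorem~\ref{theoinverseperiod} on the locus $\gamma=0$ as stated presupposes the very result you are proving. The clean fix is to note that both sides of $(\ref{mainrelation})$ are analytic on all of $\mathcal{M}^{\rm N}_{\rm K3}$, so the identity established for $\gamma\neq 0$ extends to $\gamma=0$ by continuity; after that, your diagonal-restriction computation with the classical factorizations of $\mathcal{E}_4$, $\mathcal{E}_6$, $\mathcal{C}_{12}$ along $\mathcal{H}_1$ is valid and gives an independent derivation of the formulas. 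This buys you a self-contained proof within the present paper's framework, at the cost of the constant-tracking you already anticipated; the paper's approach (citation) buys brevity but relies on the earlier geometric two-isogeny analysis.
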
  
\noindent In this paper, we use the formulas of Theorem $\ref{theoinverseperiod}$ in order to explicitly identify the 
genus-two curves ${\rm C}$ corresponding to the remaining case, by computing the Igusa-Clebsch 
invariants $ \left [ \ \mathcal{A}, \ \mathcal{B}, \ \mathcal{C} , \ \mathcal{D} \ \right ] \in {\mathbb W}{\mathbb P}^3(2,4,6,10) $ associated 
with these curves.
\begin{theorem}
Assume $\gamma \neq 0$. Under the duality correspondence $(\ref{basiccor})$, the principally polarized abelian surface ${\rm A}$ associated to 
$ {\rm X}(\alpha,\beta, \gamma, \delta)$ is given by 
$$
\left ( \ {\rm Jac}({\rm C}), \ \mathcal{O}_{{\rm Jac}({\rm C})}(\Theta) \   \right ) 
$$
where ${\rm C}$ is a smooth genus-two curve of Igusa-Clebsch invariants:
$$ \left [ \ \mathcal{A}, \ \mathcal{B}, \ \mathcal{C} , \ \mathcal{D} \ \right ] \ = \ 
\left [ \ 2^3 3 \delta, \ 2^2  3^2 \alpha \gamma^2, \ 2^3  3^2 (4\alpha \delta + \beta \gamma) \gamma^2, \ 2^2 \gamma^6    \ \right ]. 
$$
\end{theorem}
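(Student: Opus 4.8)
The strategy is to read off the Igusa--Clebsch invariants of $\mathrm{C}$ from the classical description of these invariants as Siegel modular forms, combined with the explicit inverse period map of Theorem~\ref{theoinverseperiod}; once that theorem is available, the present statement is essentially a corollary.

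First I would locate the relevant point of $\mathcal{F}_2$. Set $\tau = \mathrm{per}\bigl(\mathrm{X}(\alpha,\beta,\gamma,\delta)\bigr)$. By the construction of the Hodge correspondence $(\ref{basiccor})$, the principally polarized abelian surface $\mathrm{A}$ attached to $\mathrm{X}(\alpha,\beta,\gamma,\delta)$ is the one whose period point is $\tau$. The Torelli morphism identifies the moduli space $\mathcal{M}_2$ of smooth genus-two curves with $\mathcal{F}_2\setminus\mathcal{H}_1$ via $\mathrm{C}\mapsto(\mathrm{Jac}(\mathrm{C}),\Theta)$, and Igusa's weight-ten cusp form $\mathcal{C}_{10}$ vanishes precisely on $\mathcal{H}_1$ (the locus of decomposable principally polarized abelian surfaces). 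On the other hand, Theorem~\ref{theoinverseperiod} gives $[\mathcal{E}_4(\tau):\mathcal{E}_6(\tau):2^{12}3^5\mathcal{C}_{10}(\tau):2^{12}3^6\mathcal{C}_{12}(\tau)] = [\alpha:\beta:\gamma:\delta]$, so that the hypothesis $\gamma\neq0$ is equivalent to $\mathcal{C}_{10}(\tau)\neq0$, i.e. to $\tau\notin\mathcal{H}_1$. Hence, under this hypothesis, $\mathrm{A}\cong\bigl(\mathrm{Jac}(\mathrm{C}),\mathcal{O}_{\mathrm{Jac}(\mathrm{C})}(\Theta)\bigr)$ for a unique smooth genus-two curve $\mathrm{C}=\mathrm{C}_\tau$, and it remains only to compute its Igusa--Clebsch invariants.

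Next I would invoke Igusa's classical formulas expressing the Igusa--Clebsch invariants $[\mathcal{A},\mathcal{B},\mathcal{C},\mathcal{D}]$ of $\mathrm{C}_\tau$, viewed as a point of $\mathbb{W}\mathbb{P}^3(2,4,6,10)=\mathbb{W}\mathbb{P}^3(1,2,3,5)$, as weighted-homogeneous combinations of $\mathcal{E}_4,\mathcal{E}_6,\mathcal{C}_{10},\mathcal{C}_{12}$ valid on $\mathcal{F}_2\setminus\mathcal{H}_1$: schematically, $\mathcal{A}$ is a constant multiple of $\mathcal{C}_{12}/\mathcal{C}_{10}$, $\mathcal{B}$ a multiple of $\mathcal{E}_4$, $\mathcal{C}$ a weight-six combination of $\mathcal{E}_6$ and $\mathcal{E}_4\mathcal{C}_{12}/\mathcal{C}_{10}$, and $\mathcal{D}$ a multiple of $\mathcal{C}_{10}$, the universal constants being powers of $2$ and $3$ fixed once the normalizations of the modular forms and of the invariants are chosen. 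Then I would substitute the inverse period map: by Theorem~\ref{theoinverseperiod} choose $\lambda\in\mathbb{C}^*$ with $\mathcal{E}_4(\tau)=\lambda^2\alpha$, $\mathcal{E}_6(\tau)=\lambda^3\beta$, $\mathcal{C}_{10}(\tau)=\lambda^5\gamma/(2^{12}3^5)$, $\mathcal{C}_{12}(\tau)=\lambda^6\delta/(2^{12}3^6)$, and plug these into the expressions above. Since the modular weights $4,6,10,12$ are twice the weights $2,3,5,6$, the powers of $\lambda$ that occur in the four coordinates are exactly $1,2,3,5$ — the reduced weights of $\mathbb{W}\mathbb{P}^3(1,2,3,5)$ — so $\lambda$ is a weighted-projective rescaling and drops out; clearing the factor $\gamma$ appearing in the denominators (equivalently, rescaling the point by $\gamma$) turns the coordinates into weighted-homogeneous polynomials in $\alpha,\beta,\gamma,\delta$, and matching the constants yields $[\mathcal{A},\mathcal{B},\mathcal{C},\mathcal{D}]=[\,2^3 3\delta,\ 2^2 3^2\alpha\gamma^2,\ 2^3 3^2(4\alpha\delta+\beta\gamma)\gamma^2,\ 2^2\gamma^6\,]$, as claimed.

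The conceptual content is modest once Theorem~\ref{theoinverseperiod} is in hand. The delicate part — and where I expect the real work to lie — is reproducing Igusa's dictionary between the Igusa--Clebsch invariants and the Siegel modular forms with the exact universal constants under one consistent set of normalizations for $\mathcal{E}_4,\mathcal{E}_6,\mathcal{C}_{10},\mathcal{C}_{12}$ and for the invariants, and then tracking the single weighted-projective rescaling so that all four coordinates emerge with precisely the stated powers of $2$ and $3$. As a consistency check on the final formula, letting $\gamma\to0$ degenerates $\mathrm{C}$ and the Igusa--Clebsch point should specialize to that of a product $\mathrm{E}_1\times\mathrm{E}_2$ of elliptic curves whose $j$-invariants satisfy the relations of Theorem~\ref{tthe8e8}.
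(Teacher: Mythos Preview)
Your proposal is correct and follows essentially the same approach as the paper: the statement is derived as a corollary of Theorem~\ref{theoinverseperiod} (equivalently Theorem~\ref{maincomp}) by substituting into Igusa's formula $(\ref{igusaclebschinsiegel})$ from Remark~\ref{remarkgen2}, which already records the exact dictionary $[\mathcal{A},\mathcal{B},\mathcal{C},\mathcal{D}] = [\,2^3 3\,\mathcal{C}_{12}/\mathcal{C}_{10},\ 2^2\mathcal{E}_4,\ 2^5\mathcal{E}_4\mathcal{C}_{12}/\mathcal{C}_{10}+2^3 3^{-1}\mathcal{E}_6,\ 2^{14}\mathcal{C}_{10}\,]$ with the normalizations used throughout, so the constants you were worried about tracking are already fixed in the paper; rescaling the resulting point by $\lambda^2=3\gamma$ in $\mathbb{WP}(2,4,6,10)$ clears the denominators and yields the stated invariants.
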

\noindent The present paper should be considered in connection with the companion paper \cite{clingher5}. This is because the proofs of the 
theorems mentioned above do not involve period computations. They rather rely on a very specific observation:  
the Hodge-theoretic correspondence $(\ref{basiccor})$
is a consequence of a purely geometric phenomenon - the existence of a pair of dual geometric two-isogenies of K3 surfaces
between the ${\rm N}$-polarized surface ${\rm X}$ and the Kummer surface ${\rm Y}$ associated to the abelian 
surface ${\rm A}$ corresponding to $({\rm X}, i)$ under $(\ref{basiccor})$. The precise meaning of this isogeny concept is 
explained in \cite{clingher5}. In short, the observation consists of the existence of two special 
Nikulin involutions $\Phi_{{\rm X}}$ and $\Phi_{{\rm Y}}$ ,
acting on the surfaces ${\rm X}$ and ${\rm Y}$, respectively, which lead to degree-two rational maps ${\rm
p}_{{\rm X}}$ and ${\rm p}_{{\rm Y}}$. The involutions $\Phi_{{\rm X}}$ and $\Phi_{{\rm Y}}$ are
associated naturally with two canonical elliptic fibrations $\varphi_{{\rm
X}}$ and $\varphi_{{\rm Y}}$ on ${\rm X}$ and ${\rm Y}$ over a base rational
curve ${\rm B}$. The involutions are fiberwise two-isogenies in the sense that they correspond to translations 
by sections of order-two within the smooth fibers of the fibrations $\varphi_{{\rm
X}}$ and $\varphi_{{\rm Y}}$.
\begin{equation}
\xymatrix %@-1.2pc
{ {\rm Y} \ar @(dl,ul) _{\Phi_{{\rm Y}}} \ar [dr] _{\varphi_{{\rm Y}}}
\ar @/_0.5pc/ @{-->} _{{\rm p}_{{\rm Y}}} [rr] &
& {\rm X} \ar @(dr,ur) ^{\Phi_{{\rm X}}}
\ar [dl] ^{\varphi_{{\rm X}}}
\ar @/_0.5pc/ @{-->} _{{\rm p}_{{\rm X}}} [ll] \\
& {\rm B} & \\
} \label{introdiag}
\end{equation}
\noindent The above geometric phenomenon allows one to make the duality map explicit, without involving an analysis of 
Hodge structures or period computations.  
\par The present work focuses on the case of ${\rm N}$-polarized K3 surfaces for which the lattice polarization does not extend to 
a polarization by the lattice ${\rm M}={\rm H}\oplus {\rm E}_8 \oplus {\rm E}_8$. The case involving ${\rm M}$-polarizations has 
been presented in \cite{clingher4}, work on which 
the present paper builds. 
\par Various partial ingredients pertaining to this construction have been discussed by the authors and others in 
earlier works. In his 1977 work \cite{inose1}, Inose presented a normal form for K3 surfaces and constructed the 
Nikulin involution $\Phi_{{\rm Y}}$ on the Kummer surface associated with the product of two elliptic curves. 
The construction of $\Phi_{{\rm Y}}$ in Inose's context uses a different elliptic fibration with respect to which 
the Nikulin involution is not a fiberwise isogeny. In the paper \cite{clingher4}, the authors constructed each piece of diagram
(\ref{introdiag}) in the case of ${\rm M}$-polarized K3 surfaces, including explicit equations for both elliptic
fibrations $\varphi_{{\rm X}}$ and $\varphi_{{\rm Y}}$.  This case was also treated in \cite{shioda06} by Shioda. 
One particular sub-family of ${\rm M}$-polarized K3 surfaces, with generic Picard lattice enhanced to ${\rm H} \oplus
{\rm E}_8 \oplus {\rm E}_8 \oplus \langle -4 \rangle$ was considered by Van Geemen and Top in \cite{geementop}. 
The Van Geemen-Top family corresponds, in terms of the duality $(\ref{basiccor})$, to pairs of two-isogeneous 
elliptic curves.  
\par An indication that the construction can be extended from ${\rm M}$-polarized to ${\rm N}$-polarized K3 surfaces
was given by Dolgachev in his appendix to the paper \cite{galluzzi} by Galluzzi and Lombardo, where, 
based on an analysis of Fourier-Mukai partners, they observe that K3 surfaces with N\'{e}ron-Severi lattice
exactly ${\rm N}$ are in correspondence with Jacobians of genus two curves. The present paper has its origin in 
Dolgachev's observation. The authors extend the geometric arguments from \cite{clingher4} to the full 
${\rm N}$-polarized case by constructing in detail the two-isogenies between the ${\rm N}$-polarized K3 surfaces and 
their dual Kummer surfaces of principally polarized abelian surfaces. An explicit computation based on 
parts of this construction was made by Kumar \cite{kumar}. \\

\section{A Four-Parameter Quartic Family}
\label{extendedinosesection}
\begin{dfn}
Consider $(\alpha, \beta, \gamma, \delta) \in \mathbb{C}^4 $. Let
${\rm Q}(\alpha,\beta, \gamma, \delta)$ be the projective quartic surface in $\mathbb{P}^3(x,y,z,w)$ given by:
\begin{equation}
\label{extendedinose}
y^2zw - 4 x^3z + 3 \alpha xzw^2 +\beta zw^3 + \gamma xz^2w
- \frac{1}{2} ( \delta z^2 w^2 + w^4 ) \ = \ 0 \ .
\end{equation}
Denote by
${\rm X}(\alpha,\beta,\gamma, \delta)$ the non-singular complex surface obtained as the minimal resolution of ${\rm Q}(\alpha,\beta, \gamma, \delta)$.
\end{dfn}
\noindent The four-parameter quartic family ${\rm Q}(\alpha,\beta, \gamma, \delta)$ generalizes a special two-parameter family of K3 surfaces introduced 
by Inose in \cite{inose1}. 
\begin{theorem}
\label{tth1}
If $\gamma \neq 0$ or $\delta \neq 0$, then ${\rm X}(\alpha,\beta,\gamma, \delta)$ is a K3 surface
endowed with a canonical ${\rm N}$-polarization.
\end{theorem}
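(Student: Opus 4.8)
The plan is to prove the two halves of the assertion separately, reducing the K3 property to a statement about the singularities of the quartic ${\rm Q} = {\rm Q}(\alpha,\beta,\gamma,\delta)$. Since ${\rm Q}$ is a quartic hypersurface in $\mathbb{P}^3$, adjunction gives $\omega_{{\rm Q}} \cong \mathcal{O}_{{\rm Q}}$, and the ideal sheaf sequence $0 \to \mathcal{O}_{\mathbb{P}^3}(-4) \to \mathcal{O}_{\mathbb{P}^3} \to \mathcal{O}_{{\rm Q}} \to 0$ gives $H^1(\mathcal{O}_{{\rm Q}}) = 0$. Hence, once one knows that every singular point of ${\rm Q}$ is a rational double point, the minimal resolution $\pi \colon {\rm X} \to {\rm Q}$ is crepant, so $\omega_{{\rm X}} \cong \mathcal{O}_{{\rm X}}$, and, the singularities being rational, $H^1(\mathcal{O}_{{\rm X}}) = H^1(\mathcal{O}_{{\rm Q}}) = 0$; thus ${\rm X}$ is a K3 surface. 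The whole weight of the first part therefore lies in the singularity analysis of ${\rm Q}$, and this is exactly where the hypothesis $\gamma \neq 0$ or $\delta \neq 0$ enters.

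To locate the singular locus, write $F$ for the defining quartic; then $\partial_y F = 2 y z w$, so every singular point satisfies $y = 0$, $z = 0$ or $w = 0$, and a short computation with the remaining partials shows that the only singular points with $z = 0$ or $w = 0$ are ${\rm P}_1 = [0:1:0:0]$, which lies on the line ${\rm L} = \{z = w = 0\} \subset {\rm Q}$, and ${\rm P}_2 = [0:0:1:0]$, while every further singular point lies in $\{z \neq 0,\, w \neq 0\}$ and has $y = 0$. At ${\rm P}_1$ (chart $y = 1$) the leading form of $F$ is $zw$, of rank two for all parameter values; computing the Milnor number — or, equivalently, performing coordinate changes that successively eliminate $w$ and $z$ — shows that ${\rm P}_1$ is always an ${\rm A}_{11}$ singularity, in particular Du Val. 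At ${\rm P}_2$ (chart $z = 1$, coordinates $(x,y,w)$) the leading form of $F$ is $\gamma x w - \tfrac{1}{2}\delta w^2$: if $\gamma \neq 0$ this has rank two and ${\rm P}_2$ is an ${\rm A}_n$ singularity; if $\gamma = 0$ and $\delta \neq 0$, completing the square in $w$ reduces the local equation to $\tilde w^2 + (\text{unit})\,x^3 + (\text{unit})\,y^4 + \cdots$, an ${\rm E}_6$ singularity; whereas when $\gamma = \delta = 0$ the leading form vanishes and ${\rm P}_2$ degenerates to a non-rational (simple elliptic) singularity. This is the step that forces the hypothesis. For the remaining singular points I would use that on $\{z \neq 0,\, w \neq 0\}$ one can write $F = (zw)\, y^2 + F_0(x,z,w)$ with $zw$ invertible, so that ${\rm Q}$ is locally the double cover $y^2 = -F_0/(zw)$ branched along the plane curve $\{F_0 = 0\}$; its singular points there are precisely the singular points of this branch curve, and one checks that these remain simple (ADE) for every admissible parameter, so that the corresponding surface singularities are Du Val.

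For the ${\rm N}$-polarization I would use the canonical genus-one fibration coming from projection off the line ${\rm L}$. The planes through ${\rm L}$ cut ${\rm Q}$ in ${\rm L}$ together with a residual cubic, and these cubics organize, after resolution, into a fibration $\varphi \colon {\rm X} \to \mathbb{P}^1 = \{[z:w]\}$; all residual cubics pass through ${\rm P}_1$ as smooth points, so resolving ${\rm P}_1$ produces a section ${\rm O}$, making $\varphi$ an elliptic fibration with section. Over the two points of $\mathbb{P}^1$ corresponding to the planes $\{z = 0\}$ and $\{w = 0\}$ the residual cubics degenerate to triple lines; combining this with the resolutions of ${\rm P}_1$ and ${\rm P}_2$ above, Tate's algorithm identifies one of these fibers as a fiber of type ${\rm II}^{*}$ and the other as a fiber of type ${\rm III}^{*}$. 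The classes of a general fiber, of ${\rm O}$, of the eight non-identity components of the ${\rm II}^{*}$ fiber, and of the seven non-identity components of the ${\rm III}^{*}$ fiber then span a sublattice of ${\rm NS}({\rm X})$ isometric to ${\rm H} \oplus {\rm E}_8 \oplus {\rm E}_7 = {\rm N}$; I would finish by checking that this sublattice is primitive and contains a pseudo-ample class (for instance ${\rm O}$ plus a suitable effective combination of fiber components), which yields the asserted canonical ${\rm N}$-polarization.

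The hard part will be the uniform control of the singularities in the middle step: one must show that \emph{no} value of $(\alpha,\beta,\gamma,\delta)$ with $\gamma \neq 0$ or $\delta \neq 0$ produces a singularity worse than a rational double point, which comes down to understanding the possible degenerations and collisions of the branch curve $\{F_0 = 0\}$ and checking they never reach an ordinary triple point or worse — together with the bookkeeping, via Tate's algorithm, that pins down the two distinguished fibers as ${\rm II}^{*}$ and ${\rm III}^{*}$. The remaining steps are either standard reductions or bounded local computations.
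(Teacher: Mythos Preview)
Your approach matches the paper's: verify that the singularities of ${\rm Q}$ are Du Val (with ${\rm P}_1$ always of type ${\rm A}_{11}$, and ${\rm P}_2$ depending on whether $\gamma$ vanishes), then read off the ${\rm N}$-polarization from the curve configuration on the resolution. The paper does this last step by directly exhibiting an ${\rm E}_8\oplus{\rm E}_7$ subdiagram together with $\langle a_9,f\rangle\simeq{\rm H}$; you package the same curves as the section, fiber class, and reducible-fiber components of the elliptic fibration coming from projection off the line $\{z=w=0\}$, which is precisely the ``standard'' fibration $\varphi_{\rm X}^{\rm s}$ the paper introduces immediately after this proof.

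One correction is needed: your blanket claim that Tate's algorithm yields fibers of types ${\rm II}^*$ and ${\rm III}^*$ holds only for $\gamma\neq 0$. When $\gamma=0$ and $\delta\neq 0$, your own local analysis makes ${\rm P}_2$ an ${\rm E}_6$ point, and the corresponding special fiber is then ${\rm II}^*$, not ${\rm III}^*$; the paper splits into these two cases for exactly this reason. The conclusion survives because ${\rm H}\oplus{\rm E}_8\oplus{\rm E}_8$ still contains ${\rm N}$ primitively, but the case distinction should be made explicit. Relatedly, in the $\gamma\neq 0$ case you should pin down ${\rm P}_2$ as exactly ${\rm A}_5$ rather than ``some ${\rm A}_n$'': the value $n=5$ is what makes the resolved curves assemble into a ${\rm III}^*$ fiber. (Primitivity of the resulting ${\rm N}$-sublattice is automatic, since $\lvert{\rm disc}({\rm N})\rvert=2$ forbids any proper integral overlattice.)
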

\begin{proof}
The hypothesis $\gamma \neq 0$ or $\delta \neq 0$ ensures that the singular locus of the quartic surface 
${\rm Q}(\alpha,\beta, \gamma, \delta)$ consists of a finite collection of rational double points. This 
fact implies, in turn, that ${\rm X}(\alpha,\beta,\gamma, \delta)$ is a K3 surface.
\par Let us present the ${\rm N}$-polarization on ${\rm X}(\alpha,\beta,\gamma, \delta)$. Note that 
${\rm Q}(\alpha,\beta, \gamma, \delta)$ has two special singular points
$$ {\rm P}_1 = [0,1,0,0], \ \ \ \ {\rm P}_2 = [0,0,1,0].  $$
For a generic choice of quadruple $(\alpha,\beta, \gamma, \delta)$, the singular locus of ${\rm Q}(\alpha,\beta, \gamma, \delta)$ is
precisely $\{ {\rm P}_1,  {\rm P}_2 \} $. Under the condition $\gamma \neq 0$ or $ \delta \neq 0$, both ${\rm P}_1$ and ${\rm P}_2$ are rational
double point singularities. The point ${\rm P}_1$ is always a rational double
point of type ${\rm A}_{11}$. The type of the rational double point ${\rm P}_2$ is covered by two situations. If $\gamma \neq 0$ then
${\rm P}_2$ has type ${\rm A}_5$. When $\gamma=0$, the singularity at ${\rm P}_2$ is of type ${\rm E}_6$.
\par The intersection locus of the quartic ${\rm Q}(\alpha,\beta, \gamma, \delta)$ with the plane of equation $w=0$ consists of
two distinct lines, ${\rm L}_1$ and ${\rm L}_2$, defined by $x=w=0$ and $z=w=0$, respectively. In addition, when $\gamma \neq 0$
one has an additional special curve on ${\rm Q}(\alpha, \beta, \gamma, \delta)$ obtained as the intersection of the plane of equation
$2 \gamma x = \delta w$ with the cubic surface
$$ 2\gamma^3 y^2 z + (-\delta^3 + 3 \alpha \gamma^2 \delta + 2 \beta \gamma^3) z w^2 - \gamma^3 w^3 = 0  \ .$$
This curve resolves to a rational curve in ${\rm X}(\alpha,\beta, \gamma, \delta)$ which we denote by $d$.
\par After resolving the singularities at ${\rm P}_1$ and ${\rm P}_2$ one obtains a special configuration
of rational curves on ${\rm X}(\alpha, \beta, \gamma, \delta)$. The dual diagram of this configuration, in the two cases in question, 
is presented below.
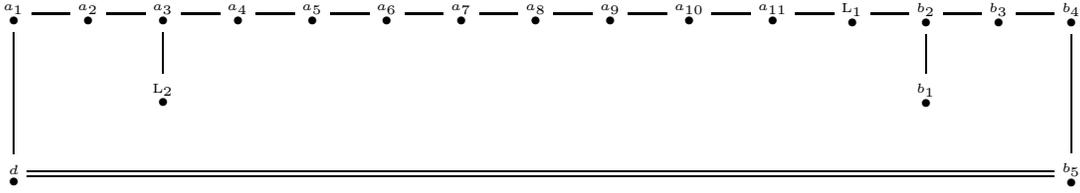
\begin{figure}[h]
\begin{center}
\leavevmode
\def\objectstyle{\scriptstyle}
\def\labelstyle{\scriptstyle}
\xymatrix @-0.8pc  {
\stackrel{a_1}{\bullet} \ar @{-} [r] \ar @{-}[dd]
& \stackrel{a_2}{\bullet} \ar @{-} [r]&
\stackrel{a_3}{\bullet} \ar @{-} [r] \ar @{-} [d] &
\stackrel{a_4}{\bullet} \ar @{-} [r] &
\stackrel{a_5}{\bullet} \ar @{-} [r] &
\stackrel{a_6}{\bullet} \ar @{-} [r] &
\stackrel{a_7}{\bullet} \ar @{-} [r] &
\stackrel{a_8}{\bullet} \ar @{-} [r] &
\stackrel{a_9}{\bullet} \ar @{-} [r] &
\stackrel{a_{10}}{\bullet} \ar @{-} [r] &
\stackrel{a_{11}}{\bullet} \ar @{-} [r] &
\stackrel{{\rm L}_{1}}{\bullet} \ar @{-} [r] &
\stackrel{b_{2}}{\bullet} \ar @{-} [d] & \stackrel{b_{3}}{\bullet} \ar @{-} [l]
 & \stackrel{b_{4}}{\bullet} \ar @{-} [l] \ar @{-}[dd]\\
 & & \stackrel{{\rm L}_{2}}{\bullet} &  & & & & &   &    & & & \stackrel{b_{1}}{\bullet} & \\
 \stackrel{d}{\bullet} \ar @{=}[rrrrrrrrrrrrrr]& & &   & & &  & & &    & & & & & \stackrel{b_{5}}{\bullet}  \\
} \end{center} \caption{$ {\rm Case} \ \gamma \neq 0 $} \label{Gneq0Config}
\end{figure}
\vspace{.2in}
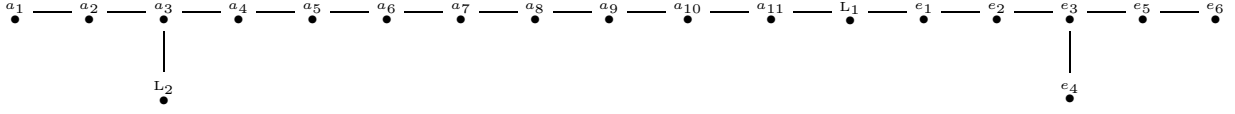
\begin{figure}[h]
\begin{center}
\leavevmode
\def\objectstyle{\scriptstyle}
\def\labelstyle{\scriptstyle}
\xymatrix @-0.81pc  {
\stackrel{a_1}{\bullet} \ar @{-} [r]
& \stackrel{a_2}{\bullet} \ar @{-} [r]& \stackrel{a_3}{\bullet} \ar @{-} [r] \ar @{-} [d] &
\stackrel{a_4}{\bullet} \ar @{-} [r] &
\stackrel{a_5}{\bullet} \ar @{-} [r] &
\stackrel{a_6}{\bullet} \ar @{-} [r] &
\stackrel{a_7}{\bullet} \ar @{-} [r] &
\stackrel{a_8}{\bullet} \ar @{-} [r] &
\stackrel{a_9}{\bullet} \ar @{-} [r] &
\stackrel{a_{10}}{\bullet} \ar @{-} [r] &
\stackrel{a_{11}}{\bullet} \ar @{-} [r] &
\stackrel{{\rm L}_{1}}{\bullet} \ar @{-} [r] &
\stackrel{e_{1}}{\bullet} \ar @{-} [r] &
\stackrel{e_{2}}{\bullet} \ar @{-} [r] &
\stackrel{e_{3}}{\bullet} \ar @{-} [d] & \stackrel{e_{5}}{\bullet} \ar @{-} [l]
 & \stackrel{e_{6}}{\bullet} \ar @{-} [l] \\
 & & \stackrel{{\rm L}_{2}}{\bullet} &  & & & & &  & & &    & & & \stackrel{e_{4}}{\bullet} & \\
} \end{center} \caption{$ {\rm Case} \ \gamma = 0 $} \label{Geq0Config}
\end{figure}

\noindent Note that when $\gamma \neq 0$ one has the following ${\rm E}_8 \oplus {\rm E}_7$ configuration.
$$
\def\objectstyle{\scriptstyle}
\def\labelstyle{\scriptstyle}
\xymatrix @-0.8pc  {
\stackrel{a_1}{\bullet} \ar @{-} [r] %\ar @{-}[dd]
& \stackrel{a_2}{\bullet} \ar @{-} [r]&
\stackrel{a_3}{\bullet} \ar @{-} [r] \ar @{-} [d] &
\stackrel{a_4}{\bullet} \ar @{-} [r] &
\stackrel{a_5}{\bullet} \ar @{-} [r] &
\stackrel{a_6}{\bullet} \ar @{-} [r] &
\stackrel{a_7}{\bullet} %\ar @{-} [r]
&
%\stackrel{a_8}{\bullet} \ar @{-} [r]
&
%\stackrel{a_9}{\bullet} \ar @{-} [r]
&
%\stackrel{a_{10}}{\bullet} \ar @{-} [r]
&
\stackrel{a_{11}}{\bullet} \ar @{-} [r] &
\stackrel{{\rm L}_{1}}{\bullet} \ar @{-} [r] &
\stackrel{b_{2}}{\bullet} \ar @{-} [d] & \stackrel{b_{3}}{\bullet} \ar @{-} [l]
 & \stackrel{b_{4}}{\bullet} \ar @{-} [l] \ar @{-}[dd]\\
 & & \stackrel{{\rm L}_{2}}{\bullet} &  & & & & &   &    & & & \stackrel{b_{1}}{\bullet} & \\
% \stackrel{d}{\bullet} \ar @{=}[rrrrrrrrrrrrrr]
& & &   & & &  & & &    & & & & & \stackrel{b_{5}}{\bullet}  \\
} $$

\noindent When $\gamma=0$, one has a similar ${\rm E}_8 \oplus {\rm E}_8$ configuration of curves.
$$
\def\objectstyle{\scriptstyle}
\def\labelstyle{\scriptstyle}
\xymatrix @-0.81pc  {
\stackrel{a_1}{\bullet} \ar @{-} [r]
& \stackrel{a_2}{\bullet} \ar @{-} [r]& \stackrel{a_3}{\bullet} \ar @{-} [r] \ar @{-} [d] &
\stackrel{a_4}{\bullet} \ar @{-} [r] &
\stackrel{a_5}{\bullet} \ar @{-} [r] &
\stackrel{a_6}{\bullet} \ar @{-} [r] &
\stackrel{a_7}{\bullet} %\ar @{-} [r]
&
%\stackrel{a_8}{\bullet} \ar @{-} [r]
&
%\stackrel{a_9}{\bullet} \ar @{-} [r]
&
%\stackrel{a_{10}}{\bullet} \ar @{-} [r]
&
\stackrel{a_{11}}{\bullet} \ar @{-} [r] &
\stackrel{{\rm L}_{1}}{\bullet} \ar @{-} [r] &
\stackrel{e_{1}}{\bullet} \ar @{-} [r] &
\stackrel{e_{2}}{\bullet} \ar @{-} [r] &
\stackrel{e_{3}}{\bullet} \ar @{-} [d] & \stackrel{e_{5}}{\bullet} \ar @{-} [l]
 & \stackrel{e_{6}}{\bullet} \ar @{-} [l] \\
 & & \stackrel{{\rm L}_{2}}{\bullet} &  & & & & &  & & &    & & & \stackrel{e_{4}}{\bullet} & \\
}
$$

\noindent The remaining orthogonal hyperbolic lattice ${\rm H}$ is spanned by the two classes associated to divisors $a_9$
and $f$ where:
\begin{equation}
\label{ttr1}
f \ = \ a_8 + 2 a_ 7 + 3 a_6 + 4a_5 + 5 a_4 + 6 a_3 + 3 {\rm L}_2 + 4 a_2 + 2 a_1 \ = \ a_{10} + 2 a_{11} + 3 {\rm L}_1 + 4 b_2 + 2b_1 + 3b_3 + 2b_2 + b_1
\end{equation}
when $\gamma \neq 0 $ and
\begin{equation}
\label{ttr2}
f \ = \ a_8 + 2 a_ 7 + 3 a_6 + 4a_5 + 5 a_4 + 6 a_3 + 3 {\rm L}_2 + 4 a_2 + 2 a_1 \ = \ a_{10} + 2 a_{11} + 3 {\rm L}_1 + 4 e_1 + 5 e_2 + 6 e_3 + 3e_4 + 2e_5 + e_6
\end{equation}
when $\gamma=0$.
\end{proof}
%\begin{rem}
%It will follow from the results of this paper that the extended Inose family ${\rm X}(\alpha,\beta, \gamma)$ actually covers
%all posible ${\rm N}$-polarized K3 surfaces. In other words, given any ${\rm N}$-polarized K3 surfaces K3 surface
%$({\rm X},i)$, one can choose a triple $ (\alpha,\beta, \gamma) \in \mathbb{C}^3$ such that ${\rm X}(\alpha,\beta, \gamma)$
%with its canonical ${\rm N}$-polarization described above is isomorphic to $({\rm X},i)$. From this point of view, one can
%regard ${\rm X}(\alpha,\beta, \gamma)$ as defining a {\it normal form} for a ${\rm N}$-polarized K3 surface.
%\end{rem}
\begin{rem}
The surfaces ${\rm X}(\alpha, \beta, 0, 0)$ are rational surfaces. On the projective quartic
surface ${\rm Q}(\alpha, \beta, 0, 0)$ the singularity at ${\rm P}_2$ is no longer a rational double point, but an elliptic singularity.
\end{rem}
\noindent Let us briefly discuss the discriminant locus of the family of quartics ${\rm Q}(\alpha,\beta,
\gamma,\delta)$. One particular discriminant component, given by the vanishing of
\begin{equation}
\label{discrr1}
\mathcal{D}_1(\alpha, \beta, \gamma, \delta) = \gamma
\end{equation}
was already mentioned during the
proof of Theorem $\ref{tth1}$. This component corresponds (on its $\delta \neq 0$ region) to the situation when the ${\rm N}$-polarization of
the K3 surface ${\rm X}(\alpha,\beta,\gamma,\delta)$ extends canonically to a lattice polarization by $ {\rm H} \oplus {\rm E}_8 \oplus {\rm E}_8 $. 
\par A second important discriminant component corresponds to the situation when,
in addition to the singular points ${\rm P}_1$ and ${\rm P}_2$, extra singularities occur on the quartic surface ${\rm
Q}(\alpha,\beta,\gamma,\delta)$. One can check that this situation is determined by the vanishing of the polynomial:
%$$
%5^5 \gamma^6 \ + \ 2\cdot 3^3 \cdot 5^4 \cdot \gamma^5 \beta \alpha \ + \
%3^3 \cdot \gamma^4 \alpha^2 \ (2^4 \cdot 3^2 \cdot 5^2 \cdot \beta^2 - 2^4 \cdot 3^2 \cdot \alpha^3 +5^3 \cdot 11 ) \ - $$
%\begin{equation}
%\label{hum4}
%-  \
%2^3 \cdot 3^3 \cdot \gamma^3 \beta \ (2^2 \cdot 3^3 \cdot \beta^4 + 2^2 \cdot 3^3 \cdot \alpha^6 - 2^3 \cdot 3^3 \cdot \beta^2 \alpha^3 -
%3^2 \cdot 5^2 \cdot \beta^2 -
%3^2 \cdot 5 \cdot 19 \cdot \alpha^3 +5^3) \ -
%\end{equation}
%$$- \ 2^3 \cdot 3^5 \cdot \gamma^2 \alpha \ (2 \cdot 3^2 \cdot \beta^4 + 2 \cdot 3^2 \cdot \alpha^6 - 2^2 \cdot 3^2 \cdot \beta^2 \alpha^3 - 5 \cdot 7 \cdot \beta^2 %- 37 \cdot \alpha^3 + 5^2 ) \ - $$
%$$ - \ 2^6 \cdot 3^6 \cdot \gamma \beta \alpha^2  \ + \ 2^4 \cdot 3^6  \ \left ( \alpha^3 - (\beta+1)^2 \right ) \ \left ( \alpha^3 - (\beta-1)^2 \right ) \ = \ 0.
%$$
$$ \mathcal{D}_4(\alpha, \beta, \gamma, \delta) \ = \ -2^53^6 \alpha^6 \beta \gamma^3 \ + \ 2^6 3^6 \alpha^3 \beta^3 \gamma^3 \ - \ 2^5 3^6 \beta^5 \gamma^3
\ - \ 2^4 3^5 \alpha^5 \gamma^4 \ + \ 2^4 3^5 5^2  \alpha^2 \beta^2 \gamma^4 \ + \ 2 \cdot 3^3 5^4 \alpha \beta \gamma^5 \ +  $$
$$ \ + \ 5^5 \gamma^6 \ - \  2^4 3^7 \alpha^7 \gamma^2 \delta \
\ + \ 2^5 3^7 \alpha^4 \beta^2 \gamma^2 \delta \ - \
2^4 3^7  \alpha \beta^4 \gamma^2 \delta \ + \ 2^33^5 5 \cdot 19  \alpha^3 \beta \gamma^3 \delta \ + \
2^3 3^5 5^2 \beta^3 \gamma^3 \delta \ + $$
$$  +  \ 3^3 5^3 11 \alpha^2 \gamma^4 \delta \ + \ 2^3 3^5 37 \alpha^4 \gamma^2 \delta^2 + \
2^3 3^5 5 \cdot 7  \alpha \beta^2 \gamma^2 \delta^2 \ - \ 2^3 3^3 5^3  \beta \gamma^3 \delta^2 \ + \ 2^4 3^6  \alpha^6 \delta^3
 \ - \ 2^5 3^6  \alpha^3 \beta^2 \delta^3 \ +  $$
\begin{equation}
\label{hum4}
   + \ 2^4 3^6  \beta^4 \delta^3 \ - \ 2^6 3^6  \alpha^2 \beta \gamma \delta^3 \ - \
2^3 3^5 5^2  \alpha \gamma^2 \delta^3 \ - \ 2^5 3^6  \alpha^3 \delta^4 \ - \ 2^5 3^6 \beta^2 \delta^4 \ + \ 2^4 3^6 \delta^5
\end{equation}
We shall see later (Remark $\ref{remarkondiscr}$), a more precise interpretation of the polynomial in $ (\ref{hum4})$. 
At this point, we note that, in terms of the ${\rm N}$-polarized K3 surfaces ${\rm X}(\alpha, \beta, \gamma, \delta) $, this discriminant 
component corresponds to the case in 
which the ${\rm N}$-polarization extends canonically to a polarization by the lattice $ {\rm H} \oplus {\rm E}_8 \oplus {\rm E}_7 \oplus {\rm A}_1$.
\par The overlap of the two discriminant components from above consists of the quartic surfaces  ${\rm Q}(\alpha,\beta, 0, \delta)$ with
\begin{equation}
\label{discrr3}
\alpha^6  + \beta^4 + \delta^2 -  2 \alpha^3 \beta^2  -  2 \alpha^3 \delta  -  2 \beta^2 \delta     \  = \  0.
\end{equation}
The K3 surfaces ${\rm X}(\alpha,\beta, \gamma, \delta)$ associated with the above condition are precisely those for which
the canonical ${\rm N}$-polarization extends to a polarization by $ {\rm H} \oplus {\rm E}_8 \oplus {\rm E}_8 \oplus {\rm A}_1$.

\subsection{Special Features on ${\rm X}(\alpha,\beta, \gamma, \delta)$}
\label{specialfeat}
\noindent Let us outline a few special properties of the four-parameter K3 family introduced above. These properties will play an important 
role in subsequent considerations.  
\par Note that the isomorphism class 
of ${\rm X}(\alpha,\beta, \gamma, \delta)$ does not change under a certain weighted scaling of the parameters $(\alpha, \beta, \gamma, \delta)$.   
\begin{prop}
Let $(\alpha, \beta, \gamma, \delta) \in \mathbb{C}^4 $ with  $\gamma \neq 0$ or $\delta \neq 0$. For any $t \in \mathbb{C}^*$,
the two ${\rm N}$-polarized K3 surfaces
$$ {\rm X}(\alpha,\beta, \gamma, \delta) \ \ {\rm and} \ \ {\rm X}(t^2 \alpha, \ t^3 \beta, \ t^5 \gamma, \ t^6 \delta) $$
are isomorphic.
\end{prop}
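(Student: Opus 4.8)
The plan is to promote the weighted rescaling of parameters to an honest projective coordinate change on $\mathbb{P}^{3}(x,y,z,w)$, to check that it carries one quartic to the other, and then to verify that it respects the distinguished curve configuration defining the ${\rm N}$-polarization. Fix $t\in\mathbb{C}^{*}$ and choose $s\in\mathbb{C}^{*}$ with $s^{2}=t$; this choice is harmless over $\mathbb{C}$ and ultimately irrelevant, since $y$ enters $(\ref{extendedinose})$ only through $y^{2}$. I would work with the diagonal automorphism
\[
\Phi\colon\ [x:y:z:w]\ \longmapsto\ [\,s^{2}x:s^{3}y:s^{-6}z:w\,]\ \in\ \mathbb{P}^{3}(x,y,z,w).
\]
Writing $F_{\alpha,\beta,\gamma,\delta}$ for the left-hand side of $(\ref{extendedinose})$: under the substitution $(x,y,z,w)\mapsto(s^{2}x,s^{3}y,s^{-6}z,w)$ the seven monomials $y^{2}zw,\ x^{3}z,\ xzw^{2},\ zw^{3},\ xz^{2}w,\ z^{2}w^{2},\ w^{4}$ acquire the factors $1,\ 1,\ s^{-4},\ s^{-6},\ s^{-10},\ s^{-12},\ 1$ respectively, so that $F_{t^{2}\alpha,\,t^{3}\beta,\,t^{5}\gamma,\,t^{6}\delta}\circ\Phi=F_{\alpha,\beta,\gamma,\delta}$. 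Hence $\Phi$ maps ${\rm Q}(\alpha,\beta,\gamma,\delta)$ isomorphically onto ${\rm Q}(t^{2}\alpha,t^{3}\beta,t^{5}\gamma,t^{6}\delta)$. Since $\gamma\neq 0$ or $\delta\neq 0$ forces $t^{5}\gamma\neq 0$ or $t^{6}\delta\neq 0$, Theorem $\ref{tth1}$ applies on both sides, and the isomorphism of quartics lifts canonically to an isomorphism $\widetilde\Phi\colon{\rm X}(\alpha,\beta,\gamma,\delta)\to{\rm X}(t^{2}\alpha,t^{3}\beta,t^{5}\gamma,t^{6}\delta)$ of the minimal resolutions.

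The next step is to see that $\widetilde\Phi$ intertwines the two canonical ${\rm N}$-polarizations. Being diagonal, $\Phi$ fixes the coordinate points ${\rm P}_{1}=[0:1:0:0]$ and ${\rm P}_{2}=[0:0:1:0]$ and preserves the hyperplane $\{w=0\}$ together with the two lines ${\rm L}_{1}=\{x=w=0\}$ and ${\rm L}_{2}=\{z=w=0\}$ on it; and since it respects the defining equations, it carries the special curve $d$ (when $\gamma\neq 0$), respectively its analogues (when $\gamma=0$), of ${\rm Q}(\alpha,\beta,\gamma,\delta)$ to the corresponding curve(s) of ${\rm Q}(t^{2}\alpha,t^{3}\beta,t^{5}\gamma,t^{6}\delta)$. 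Therefore $\widetilde\Phi$ identifies the configuration of rational curves of Figure $\ref{Gneq0Config}$ (resp. Figure $\ref{Geq0Config}$) on the source surface, label by label, with the one on the target surface; and because the remaining two generators $a_{9}$ and $f$ of the orthogonal copy of ${\rm H}$ are the fixed integral combinations of these classes recorded in $(\ref{ttr1})$ and $(\ref{ttr2})$, the induced map $\widetilde\Phi^{*}$ sends the marked sublattice ${\rm N}\hookrightarrow{\rm NS}$ of the target onto that of the source. In the notation of the definition of isomorphic ${\rm N}$-polarized K3 surfaces, this is exactly the assertion $\widetilde\Phi^{*}\circ i'=i$, which completes the argument.

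The only step with genuine content is the monomial bookkeeping in the first paragraph: one must identify the weights $(2,3,-6,0)$ on $(x,y,z,w)$ and $(4,6,10,12)=2\cdot(2,3,5,6)$ on $(\alpha,\beta,\gamma,\delta)$ for which $(\ref{extendedinose})$ is quasi-homogeneous of weight zero — equivalently, exhibit the $\mathbb{C}^{*}$-action leaving the four-parameter family invariant — and these are forced by demanding that each monomial scale consistently. Everything afterwards (the canonical lift to minimal resolutions and the tracking of the distinguished curves) is formal, and the passage from $t$ to a square root $s$ introduces no obstruction; I expect the entire proof to be short.
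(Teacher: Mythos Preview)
Your proposal is correct and matches the paper's approach: the paper also chooses a square root $q$ of $t$ and uses the diagonal projective automorphism $[x:y:z:w]\mapsto[q^{8}x:q^{9}y:z:q^{6}w]$, which after clearing the common factor $s^{6}$ is exactly your map $[s^{2}x:s^{3}y:s^{-6}z:w]$. The paper records only that $\Phi$ fixes the two singular points ${\rm P}_{1},{\rm P}_{2}$, whereas you spell out the preservation of the full curve configuration and of the generators of the ${\rm H}$-summand; this extra bookkeeping is correct and a helpful elaboration, but the underlying argument is the same.
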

\begin{proof}
Let $q$ be a square root of $t$. The proposition then follows from the fact that the projective automorphism:
$$ \Phi \colon \mathbb{P}^3 \ \longrightarrow \ \mathbb{P}^3, \ \ \ [x,y,z,w] \ \mapsto \ [\ q^8x, \ q^9y, z, \ q^6w \ ] $$
maps the quartic ${\rm Q}(\alpha,\beta, \gamma, \delta)$ to ${\rm Q}(t^2 \alpha, \ t^3 \beta, \ t^5 \gamma, \
t^6 \delta)$ while satisfying $\Phi({\rm P}_1)={\rm P}_1$,  $\Phi({\rm P}_2)={\rm P}_2$.
\end{proof}
\noindent The K3 family ${\rm X}(\alpha,\beta, \gamma, \delta)$ can therefore be regarded as being parametrized, up to an isomorphism, 
by the three-dimensional open analytic space:
\begin{equation}
\label{weightedspace}
\mathcal{P}_{{\rm N}} \ = \ \left \{ \
[ \ \alpha,\beta,\gamma, \delta \ ] \in  {\mathbb W}{\mathbb P}^3(2,3,5,6) \ \middle \vert \ \gamma \neq 0 \  {\rm or} \ \delta \neq 0
\ \right \}. 
\end{equation} 
One of the main results of this paper (to be justified by the subsequent sections) is that the space 
$\mathcal{P}_{{\rm N}}$ is a coarse moduli space for ${\rm N}$-polarized 
K3 surfaces. 
\par We also note that K3 surfaces $ {\rm X}(\alpha,\beta, \gamma, \delta) $ carry two special elliptic fibrations:
$$ \varphi^{{\rm s}}_{{\rm X}},  \varphi^{{\rm a}}_{{\rm X}} \colon  {\rm X}(\alpha,\beta, \gamma, \delta) \rightarrow \mathbb{P}^1,  $$
which we shall refer to as {\it standard} and {\it alternate}\footnote{The broader context of the elliptic fibrations $\varphi^{{\rm s}}_{{\rm X}}$, 
$\varphi^{{\rm a}}_{{\rm X}}$ is discussed in Section $\ref{stanalt}$.}. They are induced, respectively, from the pencils on 
$ {\rm Q}(\alpha,\beta, \gamma, \delta) $ associated with the rational projections  
$$ {\rm pr}_1, {\rm pr}_2 \colon \mathbb{P}^3 \dashrightarrow \mathbb{P}^1, \ \ \ \  
{\rm pr}_1 ([x,y,z,w]) = [z,w] , \ \ \ \  {\rm pr}_2 ([x,y,z,w]) = [x,w] . $$
\begin{equation}
\label{twofibrations2}
\def\objectstyle{\scriptstyle}
\def\labelstyle{\scriptstyle}
\xymatrix @-0.9pc
{
 & & & & \mathbb{P}^1  \\
  {\rm X}(\alpha, \beta, \gamma, \delta) \ar @{->} [rr] 
  \ar @/^0.5pc/ @{->} ^{\varphi_{{\rm X}}^{{\rm s}}} [urrrr] 
  \ar @/_0.5pc/ @{->} _{\varphi_{{\rm X}}^{{\rm a}}} [drrrr]
  & & \mathbb{P}^3 \ar @{-->} [drr] ^{{\rm pr}_2} \ar @{-->} [urr] _{{\rm pr}_1} &  \\
 & & & & \mathbb{P}^1 \\
}
\end{equation}
The above elliptic fibrations are completely explicit and one can easily write explicit Weierstrass forms for them. For instance:
$$ v^2 \ = \ u^2 + f_{{\rm s}}(\lambda) u + g_{{\rm s}}(\lambda) $$
with  
$$ f_{{\rm s}}(\lambda) \ = \ \lambda ^4 (\gamma \lambda + 3 \alpha ) , \ \ \ g_{{\rm s}}(\lambda) \ = \ - \lambda^5 (\delta \lambda ^2 - 2 \beta  \lambda +1)$$
describes the standard elliptic fibration $\varphi_{{\rm X}}^{{\rm s}}$ over the affine chart $ \{ \ [\lambda, 1] \ \vert \ \lambda \in \mathbb{C} \ \}  $ 
of its base. A simple computation determines the discriminant of this elliptic curve family as:
$$ 4f_{{\rm s}}^3(\lambda)+27g_{{\rm s}}^2(\lambda) \ = \ $$
$$ = \ \lambda^{10} \left ( \ 4 \gamma^3 \lambda^5 + 3 (16 \alpha \gamma^2+9 \delta^2) \lambda^4 + 
12 (16 \alpha^2 \gamma-9 \beta \delta) \lambda^3 + 2 (128 \alpha^3+54 \beta^2+27 \delta) \lambda^2 -108 \beta \lambda + 27 \ \right ). 
$$
For the alternate fibration $\varphi_{{\rm X}}^{{\rm a}}$, one can describe the fibers over the affine chart 
$ \{ \ [\mu, 1] \ \vert \ \mu \in \mathbb{C} \ \}  $ of the base as:
$$ v^2 \ = \ u^2 + f_{{\rm a}}(\mu) u + g_{{\rm a}}(\mu) $$
with:
$$ f_{{\rm a}}(\mu) \ = \ \frac{1}{12} \left ( \ 
-64 \mu^6 + 96 \alpha \mu^4 + 32 \beta \mu^3 - 36 \alpha^2 \mu^2 -6 (4 \alpha \beta + \gamma ) \mu -4 \beta^2 + 3 \delta  
\ \right ) 
$$
$$
 g_{{\rm a}}(\mu) \ = \ \frac{1}{108} \left ( \ 4 \mu^3 - 3 \alpha \mu - \beta \ \right )
 \left ( \ 
 128 \mu ^6 -192 \alpha \mu^4 -64 \beta \mu^3 + 72 \alpha^2 \mu^2 + 6 (8 \alpha \beta +3 \gamma ) \mu + 8 \beta ^2-9 \delta
 \ \right )   \ .
 $$
 The discriminant of this family is:
$$ 4f_{{\rm a}}^3(\mu)+27g_{{\rm a}}^2(\mu) \ = \ $$
$$
-\frac{1}{16} \left ( 2 \gamma \mu - \delta \right )^2 \left ( \ 
16 \mu^6 - 24 \alpha \mu^4 - 8 \beta \mu^3 + 9 \alpha^2 \mu^2 + 2 (3 \alpha \beta + \gamma ) \mu + \beta^2- \delta \
\ \right )
$$
An analysis based on Tate's algorithm \cite{tate}, applied in the context of the above formulas, allows one to conclude the following:  
\begin{prop}
Assume $ \gamma \neq 0 $ or $ \delta \neq 0$. The standard elliptic fibration 
$\varphi_{{\rm X}}^{{\rm s}} \colon  {\rm X}(\alpha,\beta, \gamma, \delta) \rightarrow \mathbb{P}^1 $ carries a section, 
given by the curve $a_9$ from Figures $\ref{Gneq0Config}$ or $\ref{Geq0Config}$. In addition, there are 
two special singular fibers over the base points $ [0,1] $ and $ [1,0] $. The fiber $\varphi^{{\rm s}}_{[0,1]}$ 
has Kodaira type ${\rm II}^*$ and is represented by the divisor
$$ 2 a_1 + 4 a_2 + 6 a_3 + 3 {\rm L}_2 + 5 a_4 + 4 a_5 + 3 a_6 + 2 a_7 + a_8. $$  
If $ \gamma \neq 0 $, then the fiber $\varphi^{{\rm s}}_{[1,0]}$ has type ${\rm III}^*$ and is 
represented by the divisor
$$ b_5 + 2 b_4 + 3 b_3 + 4 b_2 + 2 b_1 + 3 {\rm L}_1 + 2 a_{11} + a_{10} $$
from Figure $\ref{Gneq0Config}$. If $ \gamma = 0 $, then the fiber $\varphi^{{\rm s}}_{[1,0]}$ has type ${\rm II}^*$ and is 
represented by the divisor
$$  2 e_6 + 4 e_5 + 6 e_3 + 3 e_4 + 5 e_2 + 4 e_1 + 3 {\rm L}_1 + 2 a_{11} + a_{10}$$
from Figure $\ref{Geq0Config}$.
\end{prop}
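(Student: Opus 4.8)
The plan is to read everything off the explicit Weierstrass presentation of $\varphi_{{\rm X}}^{{\rm s}}$ over the chart $\lambda\in\mathbb{C}$,
$$ v^2 \ = \ u^3 + f_{{\rm s}}(\lambda)\,u + g_{{\rm s}}(\lambda),\qquad f_{{\rm s}}(\lambda)=\lambda^4(\gamma\lambda+3\alpha),\quad g_{{\rm s}}(\lambda)=-\lambda^5(\delta\lambda^2-2\beta\lambda+1), $$
together with the displayed discriminant $\Delta_{{\rm s}}(\lambda)=\lambda^{10}P_{{\rm s}}(\lambda)$ (where $\deg P_{{\rm s}}=5$) and the curve configurations of Figures $\ref{Gneq0Config}$ and $\ref{Geq0Config}$. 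Since ${\rm X}(\alpha,\beta,\gamma,\delta)$ is a K3 surface (Theorem $\ref{tth1}$), the relatively minimal fibration $\varphi_{{\rm X}}^{{\rm s}}$ has $\chi(\mathcal{O}_{{\rm X}})=2$, so in its global Weierstrass model over $\mathbb{P}^1$ the data $f_{{\rm s}},g_{{\rm s}},\Delta_{{\rm s}}$ are sections of $\mathcal{O}_{\mathbb{P}^1}(8),\mathcal{O}_{\mathbb{P}^1}(12),\mathcal{O}_{\mathbb{P}^1}(24)$; in particular their vanishing orders at the remaining base point $[1,0]$ equal $8-\deg f_{{\rm s}}$, $12-\deg g_{{\rm s}}$, $24-\deg\Delta_{{\rm s}}$. (Minimality of the local Weierstrass equations at $[0,1]$ and $[1,0]$ will be clear a posteriori, since at neither point do ${\rm ord}\,f_{{\rm s}}\ge4$ and ${\rm ord}\,g_{{\rm s}}\ge6$ hold together.)

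First I would run Tate's algorithm \cite{tate} at $\lambda=0$: there ${\rm ord}\,f_{{\rm s}}\ge4$, ${\rm ord}\,g_{{\rm s}}=5$ (the bracket of $g_{{\rm s}}$ has constant term $1$), ${\rm ord}\,\Delta_{{\rm s}}=10$, and these invariants force a minimal equation of Kodaira type ${\rm II}^*$ (they exclude $I_4^*$, the only other type with ${\rm ord}\,\Delta=10$). Its $\widetilde{{\rm E}}_8$ fibre, with the standard multiplicities, is the divisor $2a_1+4a_2+6a_3+3{\rm L}_2+5a_4+4a_5+3a_6+2a_7+a_8$ of $(\ref{ttr1})$, realised by the subdiagram on $\{a_1,\dots,a_8,{\rm L}_2\}$ of Figure $\ref{Gneq0Config}$. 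At $\lambda=\infty$: if $\gamma\neq0$ then $\deg f_{{\rm s}}=5$, $\deg g_{{\rm s}}\le7$, $\deg\Delta_{{\rm s}}=15$ (leading coefficient of $P_{{\rm s}}$ equal to $4\gamma^3$), so the local invariants $({\rm ord}\,f_{{\rm s}},{\rm ord}\,g_{{\rm s}},{\rm ord}\,\Delta_{{\rm s}})=(3,\ge5,9)$ give type ${\rm III}^*$ — the $\widetilde{{\rm E}}_7$ fibre $b_5+2b_4+3b_3+4b_2+2b_1+3{\rm L}_1+2a_{11}+a_{10}$ assembled from the ${\rm A}_5$-resolution of ${\rm P}_2$ and the curves ${\rm L}_1,a_{11},a_{10}$; if instead $\gamma=0$ (forcing $\delta\neq0$) then $\deg f_{{\rm s}}\le4$, $\deg g_{{\rm s}}=7$, $\deg\Delta_{{\rm s}}=14$ (leading coefficient $27\delta^2$), so the invariants $(\ge4,5,10)$ give again type ${\rm II}^*$, now the $\widetilde{{\rm E}}_8$ fibre $2e_6+4e_5+6e_3+3e_4+5e_2+4e_1+3{\rm L}_1+2a_{11}+a_{10}$ of Figure $\ref{Geq0Config}$, assembled from the ${\rm E}_6$-resolution of ${\rm P}_2$.

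To pin down the section, note that the Weierstrass model carries the tautological section $O\colon\lambda\mapsto[u:v:w]=[0:1:0]$, so $\varphi_{{\rm X}}^{{\rm s}}$ is Jacobian and $O$ is a smooth rational $(-2)$-curve on ${\rm X}$. The fibre over $[0,1]$ is the preimage under the minimal resolution of the hyperplane section $\{z=0\}\cap{\rm Q}$, hence is supported on ${\rm L}_2$ together with the ${\rm A}_{11}$-exceptional chain over ${\rm P}_1$; since by the previous step it is a nine-component type ${\rm II}^*$, it must be the $\widetilde{{\rm E}}_8$ configuration on $\{a_1,\dots,a_8,{\rm L}_2\}$ (forced: $a_3$ is the only trivalent curve among the candidates), leaving $a_9,a_{10},a_{11}$ outside. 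A section of a Jacobian elliptic surface meets every reducible fibre in a multiplicity-one component; for the ${\rm II}^*$ fibre this is the far end $a_8$ of its longest arm (consistently with $(\ref{ttr1})$), and the only curve in Figure $\ref{Gneq0Config}$ meeting $a_8$ but not lying on that fibre is $a_9$. Hence $O=a_9$, and one checks likewise that $a_9$ meets the fibre over $[1,0]$ in its multiplicity-one component $a_{10}$.

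The one genuinely delicate point is the fibre at $\lambda=\infty=[1,0]$: its Kodaira type is governed by the vanishing orders there of $f_{{\rm s}},g_{{\rm s}},\Delta_{{\rm s}}$ \emph{regarded as sections of} $\mathcal{O}(8),\mathcal{O}(12),\mathcal{O}(24)$ — equivalently by the degree deficits $8-\deg f_{{\rm s}}$, $12-\deg g_{{\rm s}}$, $24-\deg\Delta_{{\rm s}}$ — and not by a naive substitution $\lambda\mapsto1/\mu$ in the affine polynomials without the compensating weight. It is exactly the drop of $\deg\Delta_{{\rm s}}$ from $15$ to $14$ as $\gamma\to0$, which mirrors the degeneration of the singularity at ${\rm P}_2$ from type ${\rm A}_5$ to type ${\rm E}_6$, that converts the fibre there from ${\rm III}^*$ to ${\rm II}^*$. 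The rest is routine: Tate's algorithm plus matching the abstract fibre types against the Kac labels of $\widetilde{{\rm E}}_7$, $\widetilde{{\rm E}}_8$ and the configurations of Figures $\ref{Gneq0Config}$ and $\ref{Geq0Config}$.
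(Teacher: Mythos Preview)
Your proposal is correct and follows exactly the approach the paper indicates: the paper simply states that ``an analysis based on Tate's algorithm, applied in the context of the above formulas'' yields the proposition, and you have carried out that analysis in detail, computing the vanishing orders of $f_{{\rm s}},g_{{\rm s}},\Delta_{{\rm s}}$ at $\lambda=0$ and $\lambda=\infty$ and matching the resulting Kodaira types against the curve configurations of Figures~\ref{Gneq0Config} and~\ref{Geq0Config}. Your identification of the section as $a_9$ via its intersection with the multiplicity-one component $a_8$ of the ${\rm II}^*$ fibre is a nice supplementary argument not spelled out in the paper.
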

\begin{prop}
\label{detaliialt}
Assume $ \gamma \neq 0 $ or $ \delta \neq 0$. The alternate elliptic fibration 
$\varphi_{{\rm X}}^{{\rm a}} \colon  {\rm X}(\alpha,\beta, \gamma, \delta) \rightarrow \mathbb{P}^1 $ carries two disjoint 
sections, given by the pairs of curves $ a_1$, $b_4$ or $ a_1$, $e_6$ from Figures $\ref{Gneq0Config}$ or 
$\ref{Geq0Config}$, respectively. There is a special singular fiber over the base point $ [1,0] $. If 
$ \gamma \neq 0$, then the fiber $\varphi^{{\rm a}}_{[1,0]}$ 
has Kodaira type ${\rm I}_{10}^*$ and is represented by the divisor
$$ a_2 + {\rm L}_2 + 2 \left ( \ a_3 + a_4 + a_5 + a_6 + a_7 + a_8 + a_9 + a_{10} + a_{11} + {\rm L}_1 + b_2 \ \right ) + b_3 + b_1  $$  
from Figure $\ref{Gneq0Config}$. In such a case, one also has a singular fiber over $[\delta, 2 \gamma ] $ given by the divisor:
$$ d+ b_5 $$ of Figure $\ref{Gneq0Config}$. The fiber $\varphi^{{\rm a}}_{[\delta, 2 \gamma ]}$ has type ${\rm I}_2$ if 
$$3 \alpha \gamma^2 \delta + 2 \beta \gamma^3 - \delta^3 \neq 0 $$ 
and type ${\rm III}$ if
$$3 \alpha \gamma^2 \delta + 2 \beta \gamma^3 - \delta^3 = 0 . $$
If $ \gamma = 0 $, then the fiber $\varphi^{{\rm a}}_{[1,0]}$ has type ${\rm I}^*_{12}$ and is  
represented by the divisor
$$ a_2 + {\rm L}_2 + 2 \left ( \ a_3 + a_4 + a_5 + a_6 + a_7 + a_8 + a_9 + a_{10} + a_{11} + {\rm L}_1 + e_1 + e_2 + e_3 \ \right ) + e_5 + e_4 $$
from Figure $\ref{Geq0Config}$. %In such a situation, the fiber $\varphi^{{\rm a}}_{[\delta, 2 \gamma ]}$ is smooth.
\end{prop}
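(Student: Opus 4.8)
The plan is to prove the statement by running Tate's algorithm \cite{tate} on the Weierstrass model $v^2 = u^3 + f_{\rm a}(\mu)u + g_{\rm a}(\mu)$ displayed above for $\varphi^{\rm a}_{\rm X}$, and then matching the fiber types found against the explicit configuration of rational curves from the proof of Theorem \ref{tth1} (Figures \ref{Gneq0Config} and \ref{Geq0Config}). First I would record the data needed: since ${\rm X}(\alpha,\beta,\gamma,\delta)$ is a K3 surface and $\deg_\mu f_{\rm a} = 6 \le 8$, $\deg_\mu g_{\rm a} = 9 \le 12$ with visibly nonzero leading coefficients, the model extends across the base point $[1,0]$ after the standard rescaling $\mu = \nu^{-1}$, $u = \nu^{-4}U$, $v = \nu^{-6}V$, becoming $V^2 = U^3 + \nu^2\tilde f(\nu)U + \nu^3\tilde g(\nu)$ with $\tilde f(0)\tilde g(0) \ne 0$; hence at $\nu = 0$ one has $v(c_4) = 2$ and $v(c_6) = 3$. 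From the displayed factorization $\Delta(\mu) = -\tfrac{1}{16}(2\gamma\mu-\delta)^2 S(\mu)$, with $S(\mu) = 16\mu^6 - 24\alpha\mu^4 - 8\beta\mu^3 + 9\alpha^2\mu^2 + 2(3\alpha\beta+\gamma)\mu + \beta^2-\delta$, the affine discriminant has degree $8$ when $\gamma \ne 0$ and degree $6$ when $\gamma = 0$, so its valuation at $\nu = 0$ is $24 - \deg_\mu\Delta$, namely $16$ resp. $18$ (the total discriminant degree $24$ being the expected K3 value). Tate's algorithm in the regime $v(c_4) = 2$, $v(c_6) = 3$, $v(\Delta) = 6+n$ returns Kodaira type $\mathrm{I}^*_n$, so $\varphi^{\rm a}_{[1,0]}$ has type $\mathrm{I}^*_{10}$ when $\gamma \ne 0$ and type $\mathrm{I}^*_{12}$ when $\gamma = 0$. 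An $\mathrm{I}^*_n$ fiber has $n+5$ components in the affine $\tilde D_{n+4}$ shape; comparing component count and incidences with the relevant figure then identifies the fiber with the displayed divisor, the integer coefficients being forced as the $\tilde D$-multiplicities once one knows the fiber class is primitive and effective.

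Next, for $\gamma \ne 0$ I would examine the base point $[\delta,2\gamma]$, i.e. $\mu_0 = \delta/(2\gamma)$. The square factor gives $v_{\mu_0}(\Delta) \ge 2$, and a one-line substitution (plug $\mu_0$ into $S$, clear denominators) shows $S(\mu_0) = 0$ exactly when $3\alpha\gamma^2\delta + 2\beta\gamma^3 - \delta^3 = 0$. Independently, by the computation already carried out in the proof of Theorem \ref{tth1} the plane fiber ${\rm Q} \cap \{2\gamma x = \delta w\}$ splits off the base line $\{x = w = 0\}$ and a residual cubic curve — one substitutes $x = \tfrac{\delta}{2\gamma}w$ into \eqref{extendedinose}, factors out $w$, and recovers precisely the cubic surface of that proof — so after resolution $\varphi^{\rm a}_{[\delta,2\gamma]} = d + b_5$ has two components and is therefore of Kodaira type $\mathrm{I}_2$ or $\mathrm{III}$. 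The valuation of the discriminant decides between them: $v_{\mu_0}(\Delta) = 2$, i.e. $3\alpha\gamma^2\delta + 2\beta\gamma^3 - \delta^3 \ne 0$, gives $\mathrm{I}_2$; $v_{\mu_0}(\Delta) = 3$, i.e. the quantity vanishes (and $\mu_0$ is then a simple root of $S$), gives $\mathrm{III}$ — the geometric reflection of this being the degeneration of the cubic surface that carries $d$. When $\gamma = 0$ the factor $(2\gamma\mu-\delta)^2 = \delta^2$ is constant, so the $\mathrm{I}^*_{12}$ fiber over $[1,0]$ is the only reducible one, as stated.

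For the sections, the Weierstrass model supplies its zero section, and one checks directly that the cubic $u^3 + f_{\rm a}(\mu)u + g_{\rm a}(\mu)$ has a root that is a polynomial of degree $3$ in $\mu$ — its leading coefficient $a$ satisfying $27a^3 - 144a + 128 = 0$, with $a = \tfrac{4}{3}$ a solution — yielding a $2$-torsion section disjoint from the zero section (this is the section that defines the fiberwise two-isogeny $\Phi_{\rm X}$ of \eqref{introdiag}). To see that these are the curves $a_1$ and $b_4$ (resp. $a_1$ and $e_6$): each is a smooth rational $(-2)$-curve not occurring among the fiber components, and from the figures each meets the $\mathrm{I}^*_n$ fiber class exactly once — $a_1$ at the component $a_2$, and $b_4$ (resp. $e_6$) at $b_3$ (resp. $e_5$) — so by the standard theory of elliptic fibrations each is a section; disjointness $a_1 \cdot b_4 = 0$ (resp. $a_1 \cdot e_6 = 0$) is visible from the figures.

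The step I expect to be the main obstacle is the local analysis at $\mu = \infty$: rigorously pinning down the exact type $\mathrm{I}^*_n$ (rather than some other type with $v(c_4) = 2$) requires carrying the blow-up steps of Tate's algorithm through and checking the splitting type of the auxiliary polynomials produced along the way, even though the clean valuations $v(c_4) = 2$, $v(c_6) = 3$ already confine the type to the $\mathrm{I}^*_n$ series and $v(\Delta)$ then fixes $n$. A secondary bookkeeping point is to confirm there are no hidden reducible fibers among the roots of $S$: for parameters off the discriminant locus \eqref{hum4} the sextic $S$ has six distinct roots, none equal to $\mu_0$ or $\infty$, each giving a nodal $\mathrm{I}_1$, which closes the Euler-characteristic count $16+2+6 = 24$ (resp. $18+6 = 24$).
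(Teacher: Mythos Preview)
Your proposal is correct and follows essentially the same approach as the paper, which simply states that ``an analysis based on Tate's algorithm, applied in the context of the above formulas, allows one to conclude the following'' and then records the two propositions without further detail. You have filled in precisely the computations the paper omits: the valuations at $\nu=0$, the factorization $S(\mu_0)=\mathcal{P}_{\rm X}(\mu_0)^2$ (visible from the form $y_1^2=z_1^3+\mathcal{P}_{\rm X}z_1^2+\mathcal{Q}_{\rm X}z_1$ in $(\ref{firstrightsideww})$, since $\mathcal{Q}_{\rm X}(\mu_0)=0$), the identification of the two-torsion section, and the Euler-number bookkeeping.
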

\noindent Note that the standard fibration $\varphi_{{\rm X}}^{{\rm s}} $ offers an alternate way of defining the ${\rm N}$-polarization on the K3 
surface ${\rm X}(\alpha,\beta, \gamma, \delta)$. It is known (see \cite{clingher3, kondo, shapiro}) that a pseudo-ample 
${\rm N}$-polarization on a K3 surface is equivalent geometrically 
with the existence of a jacobian elliptic fibration with two distinct special fibers of Kodaira types ${\rm II}^*$ and ${\rm III}^*$ (or higher), 
respectively. 
\par However, it is the alternate elliptic fibration $\varphi_{{\rm X}}^{{\rm a}} $, that will play the major 
role in the consideration of this paper. Let us consider the case $\gamma \neq 0$.  Then, the alternate fibration 
has two disjoint sections given by the curves $a_1$ and $b_4$ and a singular fiber of type 
${\rm I}^*_{10}$ occurs over the base point $[1,0]$ of $\varphi_{{\rm X}}^{{\rm a}} $. Consider the affine 
chart $[\mu,1]$ as in Proposition $\ref{detaliialt}$. The elliptic fiber of $\varphi_{{\rm X}}^{{\rm a}} $ over 
$ [\mu,1]$ has then the cubic form:
\begin{equation}
\label{cubicfam1}
\left \{ \ 
y^2z - \left ( 4 \mu^3  - 3 \alpha \mu   - \beta \right ) zw^2 + \gamma \mu z^2w
- \frac{1}{2} ( \delta z^2w  + w^3) \ = \ 0 \ 
\right \} \ \subset \ \mathbb{P}^2(y,z,w) \ , \ 
\end{equation}
with two special points $[1,0,0]$ and $[0,1,0]$ associated with the two sections. 
The affine version of the cubic equation in $(\ref{cubicfam1})$, in the base chart $w=1$, is:
\begin{equation}
\label{affineccc}
y^2z  \ = \ 
z^2  \left ( 
\frac{1}{2} \delta  - \gamma \mu   
\right )  
+ z \left ( 
4 \mu ^3  - 3 \alpha \mu - \beta  
\right ) + 
\frac{1}{2}   \ ,
\end{equation}   
and one can easily verify that this affine cubic curve carries a special involution:
\begin{equation}
\label{invo}
(y,z) \ \mapsto \ \left ( -y, \ \frac{1}{(\delta -2\gamma \mu) z}\right ) \ . 
\end{equation}
The map $(\ref{invo})$ extends to an involution of $ (\ref{cubicfam1}) $ which exchanges the section points $[1,0,0]$ and $[0,1,0]$. 
For the smooth elliptic curves in $ (\ref{cubicfam1}) $, the point $[0,1,0]$ can be seen as a point of order two in the elliptic curve 
group with origin at $ [1,0,0]$. The involution determined by $(\ref{invo})$ amounts then to a fiber-wise translation by $[0,1,0]$. 
\par Note that, after multiplying $(\ref{affineccc})$  by $ z \left ( \frac{1}{2} \delta - \gamma \lambda \right ) ^2 $, one gets:
$$
\left [ yz \left ( \frac{1}{2} \delta - \gamma \mu \right ) \right ]^2 \ = \ 
\left [ z \left ( \frac{1}{2} \delta - \gamma \mu \right ) \right ]^3 
+ \left [ z \left ( \frac{1}{2} \delta - \gamma \mu \right ) \right ]^2 \left ( 
4 \mu ^3  - 3 \alpha \mu - \beta  
\right ) + 
\left [ z \left ( \frac{1}{2} \delta - \gamma \mu \right ) \right ]
\frac{1}{2} \left ( \frac{1}{2} \delta - \gamma \mu \right )  \ .
$$ 
With the coordinate change:
$$ y_1 = yz \left ( \frac{1}{2} \delta - \gamma \mu \right ), \ \ \ z_1 = z \left ( \frac{1}{2} \delta - \gamma \mu \right ), $$
one obtains:
\begin{equation}
\label{firstrightsideww}
 y_1^2 \ = \ z_1^3 + \mathcal{P}(\mu) \cdot z_1^2 + \mathcal{Q}(\mu) \cdot z_1, 
 \end{equation}
where 
$$  
\mathcal{P}(\mu)=4 \mu ^3  - 3 \alpha \mu - \beta  , \ \  
\mathcal{Q}(\mu) = \frac{1}{2} \left ( \frac{1}{2} \delta - \gamma \mu \right ) \ . $$ 
One can recognize in $ (\ref{firstrightsideww})$ the classical equation for a jacobian elliptic fibration with a special section of order-two 
(see, for instance, Section 4 of the work of Van Geemen and Sarti \cite{sarti1}). The involution of $(\ref{invo})$ can be described in this new coordinate 
context as:
$$ 
\left ( z_1, \ y_1 \right ) \ \mapsto \ 
\left (
\frac{\mathcal{Q}(\mu)}{z_1} , \ -\frac{\mathcal{Q}(\mu) \cdot y_1 }{z_1^2} 
\right ). $$
One obtains the following result:
\begin{prop}
\label{propcuinv}
Let $(\alpha, \beta, \gamma, \delta) \in \mathbb{C}^4 $ with  $\gamma \neq 0$ or $\delta \neq 0$. The birational projective involution:
\begin{equation}
\label{vgsbirat}
\Psi \colon \mathbb{P}^3 \ \dashrightarrow \ \mathbb{P}^3, \ \ \ \  
 \Psi \left ( [x,y,z,w] \right )  \ = \ [ \ xz(\delta w-2\gamma x), \ -yz(\delta w-2\gamma x), \ w^3, \ zw(\delta w-2 \gamma x) \ ]
 \end{equation}
restricts to a birational involution of the quartic surface ${\rm Q}(\alpha, \beta, \gamma, \delta) $. Moreover $\Psi$ lift to 
a non-trivial involution $\Phi_{{\rm X}}$ of the ${\rm N}$-polarized K3 surface $ {\rm X}(\alpha,\beta, \gamma, \delta) $. 
\begin{equation}
\label{diagru}
\def\objectstyle{\scriptstyle}
\def\labelstyle{\scriptstyle}
\xymatrix %@-0.9pc
{
{\rm X}(\alpha,\beta, \gamma, \delta) \ar @{->} [r] ^{\Phi_{{\rm X}}} \ar @{->} [d] & {\rm X}(\alpha,\beta, \gamma, \delta) \ar @{->} [d] \\
\mathbb{P}^3 \ar @{-->} ^{\Psi} [r] & \mathbb{P}^3 \\
}
\end{equation}
The involution $\Phi_{{\rm X}}$ exchanges the two disjoint sections of the alternate fibration $ \varphi^{{\rm a}} $ and, on the smooth fibers of this fibration, 
amounts to a fiber-wise translation by a section of order two.   
\end{prop}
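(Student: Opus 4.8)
The strategy is to read off nearly everything from the fiber-wise analysis already carried out just above the statement: the alternate fibration $\varphi^{\rm a}_{\rm X}$ has been put in the Weierstrass form $(\ref{firstrightsideww})$, and its fiber-wise translation by the order-two section has been identified with the involution $(\ref{invo})$ of the plane cubics $(\ref{cubicfam1})$. The task is to show that the projective birational map $\Psi$ of $(\ref{vgsbirat})$ globalizes this picture.

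First I would verify by a direct computation that $\Psi$ is a birational involution of $\mathbb{P}^3$: in the affine chart $w=1$ it reads $(x,y,z)\mapsto (x,\,-y,\,1/((\delta-2\gamma x)z))$, whose square is the identity, and it fails to be biregular precisely along $z=0$ and along $\delta w = 2\gamma x$. Next, since the first and fourth coordinates of $\Psi([x,y,z,w])$ are $x$ and $w$ each multiplied by the common factor $z(\delta w - 2\gamma x)$, the map $\Psi$ commutes with ${\rm pr}_2$; hence it acts fiber-by-fiber on the pencil cut on ${\rm Q}(\alpha,\beta,\gamma,\delta)$ by ${\rm pr}_2$, whose members are the plane cubics $(\ref{cubicfam1})$. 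Dehomogenizing $\Psi$ in the chart $w=1$, where $\mu = x$, shows that on the fiber over $[\mu,1]$ the map $\Psi$ is exactly the involution $(\ref{invo})$; by the reduction through $(\ref{affineccc})$ to the Weierstrass model $(\ref{firstrightsideww})$ this involution preserves the cubic curve, interchanges the section point $[1,0,0]$ with $[0,1,0]$, and on the smooth members is translation by the order-two point $[0,1,0]$. Assembling these fibers shows $\Psi$ restricts to a birational involution of ${\rm Q}(\alpha,\beta,\gamma,\delta)$, which is the first assertion.

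For the lift, $\Psi|_{\rm Q}$ induces a birational self-map $\Phi_{\rm X}$ of the minimal resolution ${\rm X}(\alpha,\beta,\gamma,\delta)$, which is a K3 surface by Theorem $\ref{tth1}$. Because a birational self-map of a K3 surface is automatically biregular (the minimal-model fact $\mathrm{Bir}({\rm X})=\mathrm{Aut}({\rm X})$), $\Phi_{\rm X}$ is a genuine automorphism, and $\Phi_{\rm X}^2 = {\rm id}$ since $\Psi^2 = {\rm id}$. The final sentence of the statement is then immediate from the fiber-wise picture: on the generic fiber of $\varphi^{\rm a}_{\rm X}$, $\Phi_{\rm X}$ is translation by a two-torsion point, so it is nontrivial, and it interchanges the zero section with the two-torsion section; in the notation of Figures $\ref{Gneq0Config}$ and $\ref{Geq0Config}$, and by Proposition $\ref{detaliialt}$, these are precisely the disjoint sections $a_1$ and $b_4$ (resp. $a_1$ and $e_6$) of the alternate fibration.

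The point requiring the most care is the passage from ``birational involution of ${\rm Q}$'' to ``biregular involution of ${\rm X}$''. The clean route is the minimal-model fact just quoted; alternatively one could track the indeterminacy locus of $\Psi$ on ${\rm Q}$ — the lines ${\rm L}_1$, ${\rm L}_2$, the plane section $\delta w = 2\gamma x$, and the curve $d$ — through the resolution of the ${\rm A}_{11}$ and ${\rm A}_5$ (resp. ${\rm E}_6$) singularities at ${\rm P}_1$ and ${\rm P}_2$, and check that no component of the configuration of Figure $\ref{Gneq0Config}$ (resp. $\ref{Geq0Config}$) is contracted. I would present the first argument in the body of the proof and relegate the explicit indeterminacy check to a remark.
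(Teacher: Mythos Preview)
Your proposal is correct and follows essentially the same route as the paper: the proposition is stated there as a direct consequence of the fiber-wise analysis in $(\ref{cubicfam1})$--$(\ref{firstrightsideww})$, and you reproduce that analysis while adding the explicit verification that $\Psi$ commutes with ${\rm pr}_2$ and the minimal-model argument $\mathrm{Bir}({\rm X})=\mathrm{Aut}({\rm X})$ for the biregular lift. The latter point is a genuine addition, since the paper simply asserts that $\Psi$ lifts to an involution $\Phi_{\rm X}$ without saying why the lift is everywhere defined; your clean invocation of the K3 minimal-model fact is the right way to close that gap.
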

\noindent Using the terminology of Definition 1.1 in \cite{clingher5}, 
$\Phi_{{\rm X}} \colon {\rm X}(\alpha,\beta, \gamma, \delta) \rightarrow {\rm X}(\alpha,\beta, \gamma, \delta) $ is a Van Geemen-Sarti involution. In the context of the dual diagrams of Figures $\ref{Gneq0Config}$ and $\ref{Geq0Config}$, the involution $\Phi_{{\rm X}}$ acts as a horizontal left-right flip.

\section{Hodge Theory and Siegel Modular Forms}
\label{hodgesiegel}
\par A coarse moduli space for the isomorphism classes of ${\rm N}$-polarized K3 surfaces can be constructed by 
gluing together spaces of local deformations. We refer the reader to \cite{asterix, dolga1} for a detailed description 
of the method. The moduli space 
$\mathcal{M}_{{\rm K3}}^{{\rm N}} $ so obtained is a quasi-projective analytic space of complex 
dimension three. Hodge theory, by the period map and the appropriate version of the Global Torelli Theorem provides 
one with an effective method of analyzing the structure of this space.
\subsection{The Period Isomorphism}
\label{modk3}
Recall that, up to an overall isometry, there exists a unique primitive embedding of ${\rm N}$ into the K3 lattice
$$ {\rm L}={\rm H} \oplus {\rm H} \oplus {\rm H} \oplus {\rm E}_8 \oplus {\rm E}_8 . $$
Fix such a lattice embedding and denote by $\Tt$ the orthogonal complement of its image. The classical period domain associated
to the lattice $\Tt$ is then:
$$ \Omega \ = \
\{ \
\omega \in \mathbb{P}^1 \left ( \Tt \otimes \Cee \right ) \ \vert \ (\omega, \omega)=0, \ \ (\omega, \bar{\omega})>0
\ \}.$$
One also has the following group isomorphism:
$$ \{ \
\sigma \in \mathcal{O}({\rm L}) \ \vert \ \sigma(\gamma) = \gamma \ {\rm for \ every} \ \gamma \in {\rm N}
\ \}  \ \stackrel{\simeq}{\longrightarrow} \ \mathcal{O}(\Tt).
$$
Via the classical Hodge decomposition, one associates to each ${\rm N}$-polarized K3 surface $({\rm X},i)$ a well-defined
point in the classifying space of ${\rm N}$-polarized Hodge structures
$$ \mathcal{O}(\Tt) \backslash \Omega . $$
Moreover, by the Global Torelli Theorem \cite{dolga1} for lattice polarized K3 surfaces, one has that the period map so constructed:
\begin{equation}
\label{permap11}
{\rm per} \colon \mathcal{M}_{{\rm K3}}^{{\rm N}}  \ \longrightarrow \  \mathcal{O}(\Tt) \backslash \Omega
\end{equation}
is an isomorphism of analytic spaces.
\par Let us analyze in detail the period domain $\Omega$. Note that the rank-five lattice $ \Tt$ is naturally
isomorphic to the orthogonal direct sum ${\rm H} \oplus {\rm H} \oplus (2) $. We select an integral basis 
$\{ p_1,p_2,q_1,q_2,r \} $ for $\Tt$ with intersection matrix:
$$
\left (
\begin{array}{ccccc}
0 & 0 & 1 & 0 & 0 \\
0 & 0 & 0 & 1 & 0 \\
1 & 0 & 0 & 0 & 0 \\
0 & 1 & 0 & 0 & 0 \\
0 & 0 & 0 & 0 & -2
\end{array}
\right )  \ .
$$
Since $p_1,p_2,q_1,q_2$ are all isotropic vectors, their intersection pairing with any given period line in
$\Omega$ is non-zero. The Hodge-Riemann bilinear relations imply then that every period in $\Omega$ can be
uniquely realized in this basis as:
$$ \omega(\tau,u,z) \ = \ ( \ \tau, \ 1, \ u , \ z^2 -\tau u,\  z \ )  $$
with $\tau,u,z \in \Cee$ satisfying $ \tau_2 u_2 > z_2^2 $. The 2-indices represent the fact that the imaginary part has been taken.
\par The period domain $\Omega$ has two connected components $ \Omega_o$ and $\overline{\Omega}_o$ which get interchanged
by the complex conjugation. Moreover, the map
\begin{equation}
\label{ident}
\kappa = \left (
\begin{array}{cc}
\tau & z \\
z & u
\end{array}
\right ) \ \ \rightarrow \ \ \omega(\tau,u,z)
\end{equation}
provides an analytic identification between the classical Siegel upper-half space of degree two:
\begin{equation}
\label{upperhalfspace}
\mathbb{H}_2 \ = \ \left \{ \ \kappa =
\left (
\begin{array}{cc}
\tau & z \\
z & u
\end{array}
\right ) \ \vert \ \
\tau_2 u_2 > z_2^2 , \ \
\tau_2 > 0
\ \right \}
\end{equation}
and the connected component $\Omega_o$. The action of the discrete group $\mathcal{O}({\rm T})$ admits a nice reinterpretion under this identification.
Note that
the real orthogonal group $\mathcal{O}(\Tt, \mathbb{R})$ has four connected components and the kernel of its action on
$\Omega$ is given by $\pm {\rm id}$. Let $\mathcal{O}^+(\Tt, \mathbb{R})$ be the (index-two) subgroup of
$\mathcal{O}(\Tt, \mathbb{R})$ that fixes the connected component of $\Omega_o$. This group can also be seen as:
$$ \mathcal{O}^+(\Tt, \mathbb{R}) \ = \ \{ \pm {\rm id} \}  \cdot \mathcal{S}\mathcal{O}^+(\Tt, \mathbb{R})$$
where $\mathcal{S}\mathcal{O}^+(\Tt, \mathbb{R})$ is the subgroup of $\mathcal{O}^+(\Tt, \mathbb{R})$ corresponding
to isometries of positive spinor-norm. Finally, set $\mathcal{O}^+(\Tt)=\mathcal{O}^+(\Tt, \mathbb{R}) \cap \mathcal{O}(\Tt)$. One has then the
following isomorphism of groups:
\begin{equation}
\label{gritsenkoizo}
{\rm Sp}_4(\mathbb{Z}) / \{ \pm {\rm I}_4 \} \ \longrightarrow \ \mathcal{O}^+(\Tt) / \{ \pm {\rm id} \} \ \simeq \   \mathcal{S}\mathcal{O}^+(\Tt).
\end{equation}
The details of $(\ref{gritsenkoizo})$ are given in \cite{gritsenko} (see Lemma 1.1 therein). In
an explicit form, the map $(\ref{gritsenkoizo})$ associates to a matrix:
$$
\left(
\begin{array}{llll}
 a_1 & a_2 & b_1 & b_2 \\
 a_3 & a_4 & b_3 & b_4 \\
 c_1 & c_2 & d_1 & d_2 \\
 c_3 & c_4 & d_3 & d_4
\end{array}
\right ) \ \in {\rm Sp}_4(\mathbb{Z})
$$
the isometry in $\mathcal{O}^+(\Tt)$ whose matrix, in  the basis $\{ p_1,p_2,q_1,q_2,r \} $, is:
$$
\left(
\begin{array}{lllll}
 -b_4 c_1+a_3 d_2 & -b_4 d_1+b_3 d_2 & b_3 c_2-a_4 d_1 & a_4 c_1-a_3
c_2 & b_3 c_1-b_4 c_2-a_3 d_1+a_4 d_2 \\
 c_3 d_2-c_1 d_4 & d_2 d_3-d_1 d_4 & -c_4 d_1+c_2 d_3 & -c_2 c_3+c_1
c_4 & -c_3 d_1+c_4 d_2+c_1 d_3-c_2 d_4 \\
 -b_4 c_1+a_3 d_2 & -b_4 d_1+b_3 d_2 & b_3 c_2-a_4 d_1 & a_4 c_1-a_3 c_2 & b_3 c_1-b_4 c_2-a_3 d_1+a_4 d_2 \\
 a_3 b_2-a_1 b_4 & b_2 b_3-b_1 b_4 & -a_4 b_1+a_2 b_3 & -a_2 a_3+a_1
a_4 & -a_3 b_1+a_4 b_2+a_1 b_3-a_2 b_4 \\
 -b_4 c_3+a_3 d_4 & -b_4 d_3+b_3 d_4 & b_3 c_4-a_4 d_3 & a_4 c_3-a_3
c_4 & b_3 c_3-b_4 c_4-a_3 d_3+a_4 d_4
\end{array}
\right).
$$
Under $(\ref{gritsenkoizo})$ and in connection with the classical action of the group
$ \Gamma_2 = {\rm Sp}_4(\mathbb{Z}) $ on
$\mathbb{H}_2$ , the identification $(\ref{ident})$ becomes equivariant. The following sequence of isomorphisms holds:
$$  \Gamma_2 \backslash \mathbb{H}_2 \ \simeq \ \mathcal{O}^+(\Tt) \backslash \Omega_o \ \simeq \ \mathcal{O}(\Tt) \backslash \Omega \ .  $$
One obtains:
\begin{prop}
\label{thmperisom} The period isomorphism $(\ref{permap11})$ identifies the moduli space
$\mathcal{M}_{{\rm K3}}^{{\rm N}}$ with the standard {\it Siegel modular threefold} 
\begin{equation}
 \mathcal{F}_2 = \Gamma_2
\backslash \mathbb{H}_2 \ . 
\end{equation}
\end{prop}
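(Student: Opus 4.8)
The plan is to assemble, into a single chain of isomorphisms, the identifications prepared in the discussion preceding the statement. By the Global Torelli Theorem for lattice-polarized K3 surfaces, the period map $(\ref{permap11})$ is already an isomorphism of analytic spaces onto $\mathcal{O}(\Tt)\backslash\Omega$, so it suffices to produce an analytic isomorphism $\mathcal{O}(\Tt)\backslash\Omega\simeq\Gamma_2\backslash\mathbb{H}_2$. First I would pin down the structure of $\Omega$ itself. Working in the basis $\{p_1,p_2,q_1,q_2,r\}$ of $\Tt$ with the displayed intersection form, the Hodge--Riemann bilinear relations force every period line to pair nontrivially with each of the four isotropic vectors $p_1,p_2,q_1,q_2$; hence it has a unique normalized representative $\omega(\tau,u,z)=(\tau,1,u,z^2-\tau u,z)$ subject to $\tau_2 u_2>z_2^2$. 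Since $\tau_2 u_2>z_2^2\ge 0$ forces $\tau_2$ and $u_2$ to have the same nonzero sign, and complex conjugation reverses it, the period domain $\Omega$ splits into the two connected components $\Omega_o$ (where $\tau_2>0$) and $\overline{\Omega}_o$, interchanged by complex conjugation, as recalled above.

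Next I would descend the quotient. Complex conjugation is induced by an element of $\mathcal{O}(\Tt,\mathbb{R})$, so $\mathcal{O}(\Tt)\backslash\Omega=\mathcal{O}^+(\Tt)\backslash\Omega_o$ for the index-two subgroup $\mathcal{O}^+(\Tt)$ stabilizing $\Omega_o$; and because $-{\rm id}$ acts trivially on $\Omega$ while having determinant $(-1)^5=-1$ on the rank-five lattice $\Tt$, we have $\mathcal{O}^+(\Tt)=\{\pm{\rm id}\}\cdot\mathcal{S}\mathcal{O}^+(\Tt)$, so the quotient only feels $\mathcal{S}\mathcal{O}^+(\Tt)$. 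The assignment $\kappa\mapsto\omega(\tau,u,z)$ of $(\ref{ident})$ is then a transparent biholomorphism from $\mathbb{H}_2$ — which is cut out precisely by $\tau_2 u_2>z_2^2$ and $\tau_2>0$, and is connected — onto $\Omega_o$. The problem is thereby reduced to matching the two group actions through $\kappa$.

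The heart of the argument, and the step I expect to be the main obstacle, is checking that the explicit homomorphism $(\ref{gritsenkoizo})$ — which sends a symplectic matrix to the isometry of $\Tt$ written in the basis $\{p_1,p_2,q_1,q_2,r\}$ by the displayed $5\times 5$ matrix — intertwines the standard fractional-linear action of $\Gamma_2={\rm Sp}_4(\mathbb{Z})$ on $\mathbb{H}_2$ with the linear action of $\mathcal{O}^+(\Tt)$ on $\Omega_o$ transported via $\kappa$. This is a direct but lengthy verification, which I would carry out on a generating set of ${\rm Sp}_4(\mathbb{Z})$ — for instance the block generators $\left(\begin{smallmatrix}A&0\\0&(A^{t})^{-1}\end{smallmatrix}\right)$, $\left(\begin{smallmatrix}I&B\\0&I\end{smallmatrix}\right)$ with $B$ symmetric, and the symplectic involution $\left(\begin{smallmatrix}0&-I\\I&0\end{smallmatrix}\right)$ — by applying the $5\times 5$ matrix to $\omega(\tau,u,z)$ and recognizing the image, after rescaling back to the normalized representative, as $\omega$ evaluated at $(A\kappa+B)(C\kappa+D)^{-1}$. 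Rather than reproduce this computation in full, I would invoke Lemma 1.1 of \cite{gritsenko}, which establishes exactly this equivariance of $(\ref{ident})$ under $(\ref{gritsenkoizo})$. Composing the chain $\mathcal{O}(\Tt)\backslash\Omega\simeq\mathcal{O}^+(\Tt)\backslash\Omega_o\simeq\Gamma_2\backslash\mathbb{H}_2=\mathcal{F}_2$ with the period isomorphism $(\ref{permap11})$ then yields the asserted identification of $\mathcal{M}_{{\rm K3}}^{{\rm N}}$ with the Siegel modular threefold $\mathcal{F}_2$, completing the plan.
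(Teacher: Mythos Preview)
Your proposal is correct and follows essentially the same approach as the paper: the proposition is presented there as the immediate conclusion of the preceding discussion, which assembles exactly the chain $\mathcal{O}(\Tt)\backslash\Omega\simeq\mathcal{O}^+(\Tt)\backslash\Omega_o\simeq\Gamma_2\backslash\mathbb{H}_2$ via the explicit parametrization $\omega(\tau,u,z)$, the identification $(\ref{ident})$, and the equivariance under the Gritsenko--Nikulin homomorphism $(\ref{gritsenkoizo})$. Your write-up in fact supplies a bit more detail than the paper on why $\Omega$ has two components and on how one would verify the equivariance, but the logical skeleton is identical.
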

\noindent As it is well-known (see, for instance, Chapter 8 of \cite{birkenhake}), complex abelian surfaces $({\rm A}, \Pi)$ endowed with principal 
polarizations are also classified by Hodge structures of weight two associated with the lattice  ${\rm T}$. Moreover, via an appropriate 
version of Global Torelli Theorem, one has that the corresponding period map establishes an analytic identification between the coarse moduli space 
$\mathcal{A}_2$ of isomorphism classes of principally polarized complex abelian surfaces and the Siegel modular threefold $\mathcal{F}_2$. 
In connection with the above considerations, one obtains then the following result: 
\begin{prop}
\label{hodge1}
There exists a Hodge theoretic correspondence:
\begin{equation}
\label{ddduality} ({\rm A}, \Pi) \ \longleftrightarrow \ ({\rm X}, i)
\end{equation}
associating bijectively to every ${\rm N}$-polarized K3 surface $({\rm X}, i)$ a unique principally polarized
abelian surface $({\rm A}, \Pi)$. The correspondence $(\ref{ddduality})$ underlies an analytic
identification
\begin{equation}
\mathcal{A}_2 \ \cong \ \mathcal{M}_{{\rm K3}}^{{\rm N}}.
\end{equation}
between the corresponding coarse moduli spaces.
\end{prop}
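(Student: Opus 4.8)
The plan is to obtain the correspondence (\ref{ddduality}) by composing two period isomorphisms that share the common target $\mathcal{F}_2 = \Gamma_2 \backslash \mathbb{H}_2$. One of the two, on the K3 side, is already in hand: Proposition \ref{thmperisom} identifies $\mathcal{M}_{\rm K3}^{\rm N}$ with $\mathcal{F}_2$, the invariant attached to $({\rm X},i)$ being the weight-two polarized Hodge structure carried by the transcendental lattice of $({\rm X},i)$, which after the identifications of Section \ref{modk3} is the lattice $\Tt \cong {\rm H}\oplus{\rm H}\oplus(-2)$ of signature $(2,3)$. What remains is to record the analogous period isomorphism on the abelian side and then to check that the two are strictly compatible, in the sense that a point of $\mathcal{F}_2$ encodes literally the \emph{same} Hodge structure on $\Tt$ whether it is reached from $\mathcal{M}_{\rm K3}^{\rm N}$ or from $\mathcal{A}_2$.

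For the abelian side I would invoke the classical theory of principally polarized abelian surfaces (see, e.g., Chapter 8 of \cite{birkenhake}). For such an $({\rm A},\Pi)$, the cohomology $H^2({\rm A},\Zee)\cong\wedge^2 H^1({\rm A},\Zee)$ is an even unimodular lattice of signature $(3,3)$, and $\Pi$ is represented by a primitive class of self-intersection $2$; an elementary lattice computation then shows that the orthogonal complement of such a class in $H^2({\rm A},\Zee)$ is isometric to $\Tt = {\rm H}\oplus{\rm H}\oplus(-2)$, and it inherits the weight-two polarized Hodge structure $\wedge^2$ of the weight-one Hodge structure on $H^1({\rm A})$, which is again of K3 type and of signature $(2,3)$. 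Global Torelli for abelian surfaces, together with the identification of $\mathbb{H}_2$ with the classifying space of such Hodge structures, then yields the period isomorphism $\mathcal{A}_2 \cong \mathcal{F}_2$ recalled just above.

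With both period isomorphisms available, one defines $({\rm X},i)\leftrightarrow ({\rm A},\Pi)$ by declaring their images in $\mathcal{F}_2$ to coincide; bijectivity on isomorphism classes is then immediate, and the induced map of moduli spaces is biholomorphic because each period map is an isomorphism of analytic spaces. The step that actually demands care — and which I regard as the core of the argument rather than a formality — is checking that the two identifications of the respective transcendental and primitive lattices with $\Tt$ are compatible not merely as lattices but as Hodge structures carrying their monodromy-group actions: one must verify that the $\mathcal{O}(\Tt)$-action occurring in the K3 period quotient $\mathcal{O}(\Tt)\backslash\Omega$ coincides, under the isomorphism (\ref{gritsenkoizo}) between ${\rm Sp}_4(\Zee)/\{\pm {\rm I}_4\}$ and $\mathcal{S}\mathcal{O}^+(\Tt)$, with the $\Gamma_2$-action defining $\mathcal{A}_2$, and that the choice of connected component $\Omega_o\subset\Omega$ corresponds to $\mathbb{H}_2$ on both sides. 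Granting this compatibility — which is built into the explicit formulas of Section \ref{modk3} and the reference \cite{gritsenko} — the resulting correspondence is canonical and is realized by an isomorphism of rational Hodge structures $\Tt\otimes\mathbb{Q}$ attached to the two surfaces, so it is indeed a Hodge correspondence, and the asserted analytic identification $\mathcal{A}_2\cong\mathcal{M}_{\rm K3}^{\rm N}$ follows at once.
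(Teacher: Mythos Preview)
Your proposal is correct and follows essentially the same approach as the paper: the paper simply remarks that principally polarized abelian surfaces are classified by weight-two Hodge structures on the lattice $\Tt$ (citing \cite{birkenhake}), giving $\mathcal{A}_2\cong\mathcal{F}_2$, and then combines this with Proposition~\ref{thmperisom} to obtain the correspondence. Your write-up in fact supplies more detail than the paper does---the lattice computation for $\langle\Pi\rangle^\perp$ and the compatibility check for the group actions---but the underlying argument is identical.
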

\noindent One can further refine the correspondence $(\ref{ddduality})$. Recall (see, for instance, Chapter 4 of \cite{gonzalez}) that 
a principal polarization $\Pi$ on a complex abelian surface ${\rm A}$ can be of two types:
\begin{itemize}
\item [(i)] $\Pi = \mathcal{O}_{{\rm A}}({\rm E}_1 + {\rm E}_2)$ where ${\rm E}_1$ and ${\rm E}_2$ are smooth complex elliptic curves.
In this case, the abelian surface ${\rm A}$ splits canonically as a cartesian product ${\rm E}_1 \times {\rm E}_2$.
\item [(ii)] $\Pi = \mathcal{O}_{{\rm A}}({\rm C})$ where ${\rm C}$ is a smooth complex genus-two curve. In this case
one can identify ${\rm A}$ canonically with the Jacobian variety ${\rm Jac}({\rm C})$, with the divisor ${\rm C}$
being given by the image of the Abel-Jacobi map.
\end{itemize}
\noindent Case $(i)$ corresponds with the situation when the abelian surface ${\rm A}$ admits an ${\rm H}$-polarization. Under $(\ref{ddduality})$, 
one obtains then that the polarization ${\rm N}$-polarization $i$ of the corresponding K3 surface ${\rm X}$ can be extended to a polarization 
by the rank-eighteen lattice ${\rm H} \oplus {\rm E}_8 \oplus {\rm E}_8$. the case $(ii)$ corresponds with the situation when the principal 
polarization given by $\Pi$ cannot be extended to an ${\rm H}$-polarization of ${\rm A}$. Therefore, via $(\ref{ddduality})$ one obtains 
${\rm N}$-polarized K3 surfaces $({\rm X},i)$ for which the polarization $i$ cannot be extended to a 
${\rm H} \oplus {\rm E}_8 \oplus {\rm E}_8$-polarization. 
\par The considerations of this section lead then to the following conclusion. The bijective correspondence $(\ref{ddduality})$ breaks into 
two parts. first, one has a bijective correspondence:
\begin{equation}
\label{ddduality1} 
\left ( \ {\rm E}_1, \ {\rm E}_2 \right )  \ \longleftrightarrow \ ({\rm X}, i)
\end{equation} 
between un-ordered pairs of complex elliptic curves and ${\rm H} \oplus {\rm E}_8 \oplus {\rm E}_8$-polarized K3 surfaces ({\rm X}, i). 
Secondly, one has a bijective correspondence:
\begin{equation}
\label{ddduality2} 
{\rm C}  \ \longleftrightarrow \ ({\rm X}, i)
\end{equation}
between smooth complex genus-two curves ${\rm C}$ and ${\rm N}$-polarized K3 surfaces  ({\rm X}, i) with the property that polarization $i$ 
does not extend to a ${\rm H} \oplus {\rm E}_8 \oplus {\rm E}_8$ polarization.   
\par The correspondence $(\ref{ddduality1})$ was the central topic of the previous work \cite{clingher3} of the authors. The present paper 
gives an explicit description for $(\ref{ddduality2})$ 
\subsection{Siegel Modular Forms in Genus Two}
An effective way to understand the geometry of $\mathcal{F}_2$ is to use the Siegel modular forms of genus two.
Let us enumerate here the main such forms that will be relevant to the present paper. For detailed references, we refer
the reader to the classical papers of Igusa \cite{igusa62, igusa67, igusa79} and Hammond \cite{hammond} as well
as the more recent monographs of Van der Geer \cite{vdgeer06} and Klingen \cite{klingen}.
\par The simplest Siegel modular forms of genus two are those derived from Eisenstein series. These are modular forms of
even weight and are defined through the classical series:
\begin{equation}
\label{eisenstein}
\mathcal{E}_{2t}(\kappa) \ = \  \sum_{({\rm C},{\rm D})} \ {\rm det}({\rm C} \kappa + {\rm D})^{-2t}, \ \ \ \ \ t >1 .
\end{equation}
The group $\Gamma_1 = {\rm SL}(2,{\mathbb Z})$ acts by simultaneous left-multiplication on the pairs $({\rm C}, {\rm D})$
of symmetric $2 \times 2$ integral matrices, and the sum in $(\ref{eisenstein})$ is taken over the orbits of
this action. The Eisenstein forms $\mathcal{E}_{2t}$ are also integral, in the sense that their Fourier
coefficients are integers.
\par A second special class of Siegel modular forms of degree two are the Siegel cusp forms, which lie in the
kernel of the Siegel operator.
%$$ \Phi \colon {\rm A}\left ( \Gamma_2 , \mathbb{C} \right ) \longrightarrow  {\rm A}\left ( \Gamma_1 , \mathbb{C} \right ). $$
The most important cusp forms are $\mathcal{C}_{10}$, $\mathcal{C}_{12}$ and $\mathcal{C}_{35}$, of weights $10$, $12$ and 
$35$, respectively. One has\footnote{
Note that in Igusa's original notation \cite{igusa62, igusa67, igusa79}, the modular forms 
$\mathcal{E}_{4}$, $\mathcal{E}_{6}$, $ \mathcal{C}_{10}$, $\mathcal{C}_{12}$, $\mathcal{C}_{35}$ appear as 
$\psi_{4}$, $\psi_{6}$, $ \chi_{10}$, $\chi_{12}$ and $\chi_{35}$.
}:  
% can be defined using theta constants as follows (see \cite{igusa67,igusa79}).
\begin{equation}
\label{ccusp10}
\mathcal{C}_{10} \ = \ -43867 \cdot 2^{-12} \cdot 3^{-5} \cdot 5^{-2} \cdot 7^{-1} \cdot 53^{-1}  
\left ( \ \mathcal{E}_4 \mathcal{E}_6 - \mathcal{E}_{10} \ \right )   
\end{equation}
\begin{equation}
\label{ccusp12}
\mathcal{C}_{12} \ = \ 131 \cdot 593 \cdot 2^{-13} \cdot 3^{-7} \cdot 5^{-3} \cdot 7^{-2} \cdot 337^{-1} 
\left ( \ 3^2 \cdot 7^2 \mathcal{E}_4^3 + 2 \cdot 5^3 \mathcal{E}_6^2 - 691  \mathcal{E}_{12} \ \right )   
\end{equation}
while $ \mathcal{C}_{35} $ satisfies a polynomial equation 
$\mathcal{C}_{35} ^2 = {\rm P}(\mathcal{E}_{4}, \mathcal{E}_{6}, \mathcal{C}_{10}, \mathcal{C}_{12})$ 
where ${\rm P}$ is a specific polynomial with all monomials of weighted degree $70$. The exact form of 
${\rm P}(\mathcal{E}_{4}, \mathcal{E}_{6}, \mathcal{C}_{10}, \mathcal{C}_{12})$ can be found 
in \cite[page 849]{igusa67}. 
\par The structure of the ring of Siegel modular forms of genus two is given by Igusa's Theorem:
\begin{theorem} (Igusa \cite{igusa79}) \
The graded ring ${\rm A}(\Gamma_2, \mathbb{C})$ of Siegel modular forms of degree two is generated by
$\mathcal{E}_{4}$, $\mathcal{E}_{6}$, $ \mathcal{C}_{10}$, $\mathcal{C}_{12}$ and $\mathcal{C}_{35}$ and is
isomorphic to:
$$ \mathbb{C} \left [ \mathcal{E}_{4}, \mathcal{E}_{6}, \mathcal{C}_{10}, \mathcal{C}_{12}, \mathcal{C}_{35} \right ] /
\left ( \mathcal{C}_{35} ^2 = {\rm P}(\mathcal{E}_{4}, \mathcal{E}_{6}, \mathcal{C}_{10}, \mathcal{C}_{12})\right ) \ . $$
\end{theorem}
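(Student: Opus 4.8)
\noindent The goal is to prove three things about the five forms $\mathcal{E}_4,\mathcal{E}_6,\mathcal{C}_{10},\mathcal{C}_{12},\mathcal{C}_{35}$: \textbf{(G)} they generate $\mathrm{A}(\Gamma_2,\mathbb{C})$; \textbf{(R)} the relation $\mathcal{C}_{35}^2=\mathrm{P}(\mathcal{E}_4,\mathcal{E}_6,\mathcal{C}_{10},\mathcal{C}_{12})$ holds for a specific weight-$70$ polynomial; and \textbf{(R$'$)} there is no further relation. I would organise the argument around the splitting $\mathrm{A}(\Gamma_2,\mathbb{C})=\mathrm{A}^{\mathrm{ev}}\oplus\mathrm{A}^{\mathrm{odd}}$ into even- and odd-weight parts, and establish: (i) $\mathrm{A}^{\mathrm{ev}}=\mathbb{C}[\mathcal{E}_4,\mathcal{E}_6,\mathcal{C}_{10},\mathcal{C}_{12}]$, a polynomial ring on four algebraically independent generators; (ii) $\mathrm{A}^{\mathrm{odd}}$ is a free rank-one $\mathrm{A}^{\mathrm{ev}}$-module generated by $\mathcal{C}_{35}$; (iii) $\mathcal{C}_{35}^2$, which lands in $\mathrm{A}^{\mathrm{ev}}$ by (ii), equals the claimed $\mathrm{P}$. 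Given (i)--(iii), the ring is $\mathbb{C}[\mathcal{E}_4,\mathcal{E}_6,\mathcal{C}_{10},\mathcal{C}_{12}]\oplus\mathcal{C}_{35}\,\mathbb{C}[\mathcal{E}_4,\mathcal{E}_6,\mathcal{C}_{10},\mathcal{C}_{12}]$, which is exactly the stated quotient. The one genuinely easy item is the algebraic independence of $\mathcal{E}_4,\mathcal{E}_6,\mathcal{C}_{10},\mathcal{C}_{12}$: this follows from a Jacobian computation on their Fourier--Jacobi expansions, so $\mathbb{C}[\mathcal{E}_4,\mathcal{E}_6,\mathcal{C}_{10},\mathcal{C}_{12}]\subseteq\mathrm{A}^{\mathrm{ev}}$ is a polynomial subring with Hilbert series $\prod_{k\in\{4,6,10,12\}}(1-t^k)^{-1}$, and it already handles (R$'$) once (i)--(ii) are known.

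For piece (i) I would pass to the Baily--Borel compactification $\mathcal{F}_2^{*}=\mathrm{Proj}\,\mathrm{A}^{\mathrm{ev}}$, so that $\mathrm{A}^{\mathrm{ev}}$ is the section ring of the powers of the Hodge line bundle. On the open part, Torelli gives $\mathcal{F}_2\setminus\mathcal{H}_1\cong\mathcal{M}_2\cong[\{\text{binary sextics with distinct roots}\}/\mathrm{SL}_2]$, and the invariant ring of a binary sextic is classically $\mathbb{C}[I_2,I_4,I_6,I_{10},I_{15}]/(I_{15}^2-\mathrm{R})$ with Hilbert series $\tfrac{1+t^{15}}{(1-t^2)(1-t^4)(1-t^6)(1-t^{10})}$. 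Igusa's dictionary --- built from Thomae's formula expressing the even theta constants through the root differences of the sextic, together with the known expansions of the Eisenstein series at the cusps --- matches this invariant ring, after the appropriate regrading and a correction supported on $\mathcal{H}_1$, with the ring generated by $\mathcal{E}_4,\mathcal{E}_6,\mathcal{C}_{10},\mathcal{C}_{12},\mathcal{C}_{35}$. The decisive remaining point is \emph{surjectivity}: no further even-weight forms exist. Here $\mathcal{C}_{10}$ serves as a reduction divisor (it cuts out $\mathcal{H}_1$ together with the boundary divisor, so a form vanishing there is divisible by it), the Siegel $\Phi$-operator and Koecher's principle control the restrictions to the one- and zero-dimensional strata of $\mathcal{F}_2^{*}$, and an induction on weight reduces an arbitrary even form to a polynomial in the four generators; equivalently $\mathcal{F}_2^{*}\cong{\mathbb W}{\mathbb P}(2,3,5,6)$. (This is consistent with Theorem~\ref{theomoduli}: the open space $\mathcal{M}^{{\rm N}}_{{\rm K3}}$, identified with $\mathcal{F}_2$ by the period map of Section~\ref{hodgesiegel}, sits inside exactly ${\mathbb W}{\mathbb P}^3(2,3,5,6)$.) A more computational packaging of the same step imports the dimension formula for $\dim M_k(\Gamma_2)$ --- via the Selberg trace formula, or Riemann--Roch on a toroidal resolution --- and merely checks that the polynomial subring already attains the full dimension in every even weight.

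For piece (ii), note that $\mathrm{diag}(1,-1,1,-1)\in\mathrm{Sp}_4(\mathbb{Z})$ fixes the divisor $\{z=0\}\subset\mathbb{H}_2$ pointwise while multiplying a weight-$k$ form by $(-1)^k$ there; hence every odd-weight form vanishes along $\mathcal{H}_1$ and, by the analogous reflections, along the reduced divisor in $\mathcal{F}_2$ which one checks to be precisely the zero locus of $\mathcal{C}_{35}$ (the union of the Humbert surfaces fixed by order-two elements acting by $-1$ on the relevant normal line). Consequently $f\mapsto f/\mathcal{C}_{35}$ is a well-defined weight-lowering isomorphism $\mathrm{A}^{\mathrm{odd}}\to\mathrm{A}^{\mathrm{ev}}$ (holomorphy of the quotient uses that $\mathrm{div}(\mathcal{C}_{35})$ is reduced; the modular transformation law is immediate; boundedness at the cusp is automatic in genus two), so $\mathrm{A}(\Gamma_2,\mathbb{C})=\mathrm{A}^{\mathrm{ev}}\oplus\mathcal{C}_{35}\,\mathrm{A}^{\mathrm{ev}}$. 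In particular $\mathcal{C}_{35}^2\in\mathrm{A}^{\mathrm{ev}}=\mathbb{C}[\mathcal{E}_4,\mathcal{E}_6,\mathcal{C}_{10},\mathcal{C}_{12}]$ is weighted-homogeneous of weight $70$; since that graded piece is finite-dimensional, comparing finitely many Fourier coefficients pins down the explicit $\mathrm{P}$ of piece (iii), completing (R). The main obstacle is the surjectivity assertion inside (i): algebraic independence, the reflection symmetry of (ii), and the coefficient comparison of (iii) are all routine, but ruling out ``extra'' even-weight modular forms is exactly where one must either work out the geometry of the Satake boundary of $\mathcal{F}_2$ in full or borrow the complete dimension formula --- there is no shortcut, which is why the result is quoted above as Igusa's theorem.
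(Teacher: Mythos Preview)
The paper does not prove this theorem at all: it is stated with attribution to Igusa \cite{igusa79} and used as background input, so there is no proof in the paper to compare your proposal against. Your sketch is a faithful outline of the standard argument (even/odd splitting, algebraic independence via Fourier--Jacobi Jacobian, odd forms divisible by $\mathcal{C}_{35}$ via the reflections fixing $\mathcal{H}_1$ and $\mathcal{H}_4$, and the hard surjectivity step in even weight handled either geometrically on the Satake compactification or by the dimension formula), and you yourself correctly flag that the surjectivity step is the substantive content and the reason the paper simply quotes the result rather than proving it.

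One small point of care in your part (ii): you assert that the reduced divisor on which every odd-weight form is forced to vanish coincides with $\mathrm{div}(\mathcal{C}_{35})$. In the paper's notation this divisor is $\mathcal{H}_1+\mathcal{H}_4$, and while the reflection $\mathrm{diag}(1,-1,1,-1)$ handles $\mathcal{H}_1$ as you say, you should name the companion element of $\mathrm{Sp}_4(\mathbb{Z})$ that fixes $\{\tau=u\}$ pointwise with automorphy factor $-1$ to cover $\mathcal{H}_4$; without it the quotient $f/\mathcal{C}_{35}$ could a priori acquire a pole along $\mathcal{H}_4$. This is easy to supply, but as written the step ``by the analogous reflections'' is where a reader would ask you to be explicit.
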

\noindent Note that, by Igusa's work \cite{igusa62}, the cusp forms $\mathcal{C}_{10}$, $\mathcal{C}_{12}$ and $\mathcal{C}_{35}$ can also be introduced in 
terms of theta constants of even characteristics as follows: 
\begin{equation}
\label{cusp10}
\mathcal{C}_{10}(\kappa) \ = \ \ - 2^{-14} \cdot \prod_{m \ {\rm even}} \ \theta_m(\kappa)^2
\end{equation}
\begin{equation}
\label{cusp12}
\mathcal{C}_{12}(\kappa) \ = \ \ 2^{-17} \cdot 3^{-1} \cdot  \sum_{(m_1m_2m_3m_4m_5m_6)} \ \left ( \ 
\theta_{m_1}(\kappa) \theta_{m_2}(\kappa) \theta_{m_3}(\kappa)  \theta_{m_4}(\kappa) \theta_{m_5}(\kappa) \theta_{m_6}(\kappa) \ 
\right )^4
\end{equation}
\begin{equation}
\label{cusp35} \mathcal{C}_{35}(\kappa) \ = \ \ - i \cdot 2^{-39} \cdot 5^{-3} \cdot \left ( \prod_{m \ {\rm even}} \
\theta_m(\kappa) \right ) \cdot \left ( \sum_{\stackrel{(m_1m_2m_3)}{{\rm asyzygous}}} \  \pm
(\theta_{m_1}(\kappa)\theta_{m_2}(\kappa) \theta_{m_3}(\kappa) ) ^{20}  \right )
\end{equation}
The products in $(\ref{cusp10})$ and
$(\ref{cusp35})$ are taken over the ten even characteristics. The sum in $(\ref{cusp12})$ is taken over the
complements of the fifteen syzygous (G\"{o}pel) quadruples of even characteristics. The sum in
$(\ref{cusp35})$ is taken over the sixty asyzygous triples of even characteristics. According to Igusa's terminology, a 
triple of even characteristics is called syzygous if the sum of the three
characteristics is even.  Otherwise, the triple is called asyzygous.  A set of even characteristics is called syzygous
(repectively asyzygous) if all triples of the set are syzygous (respectively asyzygous).
%
%
%
%The three normalizations are chosen such that the three forms are integral.
\par The factors:
\begin{equation}
\label{cusp5}
 \mathcal{C}_5(\kappa) \ = \  2^{-7} \cdot \prod_{m \ {\rm even}} \ \theta_m (\kappa)
 \end{equation}
\begin{equation}
\label{cusp30} \mathcal{C}_{30}(\kappa) \ = \ \ - i \cdot 2^{-32} \cdot 5^{-3} \cdot \left ( \sum_{\stackrel{(m_1m_2m_3)}{{\rm
asyzygous}}} \  \pm (\theta_{m_1}(\kappa)\theta_{m_2}(\kappa) \theta_{m_3}(\kappa) ) ^{20}  \right ).
\end{equation}
are not Siegel modular forms in the traditional sense, as they carry non-trivial characters $\Gamma_2 \rightarrow \Zee/ 2 \Zee$. 
The forms $ \mathcal{C}_{5}$, $ \mathcal{C}_{30}$ however satisfy the relations:
$$ \mathcal{C}_5 ^2 \ = \ - \mathcal{C}_{10} , \ \ \ \ \mathcal{C}_{5} \mathcal{C}_{30} \ = \ \mathcal{C}_{35}. $$
\noindent When computing with modular forms in practice, one can employ standard methods of \cite{mumford1, mumford2, igusa60} that reduce expressions 
involving the ten theta constants with even characteristics to four fundamental theta constants (as given in Section $\ref{therealkummerside}$). Using 
Igusa's formulas in Section 4 of \cite{igusa60} and Section 3 of \cite{igusa79}, one obtains explicit (and far from complicated) expressions: 
\begin{align}
\label{tesst1}
\mathcal{E}_4 \ =& \ 2^4 P_8 \\ 
\mathcal{E}_6 \ =& \ 2^6 P_{12} \notag \\
\mathcal{C}_{10} \ =& \ -2^2 Q_{20} \notag \\
\mathcal{C}_{12} \ =& \ 2^4 \cdot 3^{-1} Q_{24} \notag
\end{align}
where $P_8$, $P_{12}$, $Q_{20}$ and $Q_{24}$ are homogeneous polynomials in the fundamental theta constants $a$, $b$, $c$, $d$. 
The precise formulas for $P_8$, $P_{12}$, $Q_{20}$ and $Q_{24}$, are given in Appendix \ref{specialpol}.
\subsection{The Singular Locus of $\mathcal{F}_2$}
\noindent The Siegel modular threefold $ \mathcal{F}_2 = \Gamma_2 \backslash \mathbb{H}_2$ is non-compact and highly singular. 
The singular locus of $ \mathcal{F}_2$ consists of the images under the projection
\begin{equation}
\label{pproj1}
\mathbb{H}_2 \ \rightarrow \ \Gamma_2 \backslash \mathbb{H}_2
\end{equation}
of the points in $\mathbb{H}_2$ whose associated periods $\omega(\tau,u,z)$ are orthogonal to roots\footnote{A
root of ${\rm T}$ is an element $r \in {\rm T}$ such that $(r,r)=-2$.} of the rank-five lattice
${\rm T}$. As ${\rm T}$ is isomorphic to the orthogonal direct sum ${\rm H} \oplus {\rm H} \oplus {\rm A}_1$,
the set of roots of ${\rm T}$ forms two distinct orbits under the natural action of $\mathcal{O}({\rm T})$. The
two orbits are distinguished by the lattice type of the orthogonal complement $\{r\}^{\perp} \subset {\rm
T}$ of a particular root $r$. For roots $r$ in one orbit, the orthogonal complements
$\{r\}^{\perp} $ are isomorphic to ${\rm H} \oplus {\rm H}$. For roots $r$ belonging to the second orbit,
$\{r\}^{\perp}$ are isomorphic to ${\rm H} \oplus \left ( 2 \right )  \oplus (-2) $. These
facts can be shown either directly, or deduced from more general results such as the ones in \cite{nikulin1}.
\par The singular locus of $ \mathcal{F}_2 $ has therefore two connected components, which turn out
to be the two Humbert surfaces $\mathcal{H}_1$ and $\mathcal{H}_4$. These surfaces are the images under the
projection $(\ref{pproj1})$ of the two divisors in $\mathbb{H}_2$ associated to $z=0$ and $\tau=u$,
respectively. As analytic spaces, both these surfaces are isomorphic (see for instance Chapter IX of
\cite{vdgeer}) to the Hilbert modular surface:
\begin{equation}
\label{hilbertmods}
 \left ( \Gamma_1 \times \Gamma_1 \right ) \rtimes \mathbb{Z} / 2 \mathbb{Z} \ \  \backslash  \ \left ( \mathbb{H}_1 \times \mathbb{H}_1 \right ).
 \end{equation}
\noindent The Humbert surfaces $\mathcal{H}_1$, $\mathcal{H}_4$ are the vanishing locus of the cusp forms $\mathcal{C}_{5}$ and $\mathcal{C}_{30}$, respectively. 
The formal sum $\mathcal{H}_1 + \mathcal{H}_4$ is then the vanishing divisor of the Siegel cusp form $\mathcal{C}_{35}$.
\par We also note that, under the period isomorphism of Proposition \ref{thmperisom}, the Humbert 
surfaces $\mathcal{H}_1$ and $\mathcal{H}_4$ correspond to ${\rm N}$-polarized K3 surfaces $({\rm X},i)$ 
for which the lattice polarizations $ i $ extends to a ${\rm H} \oplus {\rm E}_8 \oplus {\rm
E}_8$-polarization or ${\rm H} \oplus {\rm E}_8 \oplus {\rm E}_7 \oplus {\rm A}_1$-polarization,
respectively.
\begin{rem}
\label{remarkgen2}
Via the periods of the polarized Jacobian varieties ${\rm Jac}({\rm C})$, one gets a natural identification between 
the open subset $ \mathcal{F}_2 \setminus \mathcal{H}_1$ and the moduli space $\mathcal{M}_2$ of isomorphism classes of 
non-singular complex genus-two curves. The Igusa-Clebsch invariants \cite{bolza, clebsch, igusa60}
\begin{equation}
\left [ \ \mathcal{A}, \ \mathcal{B}, \ \mathcal{C}, \ \mathcal{D} \  \right ] \ \in \mathbb{WP}(2,4,6,10)
\end{equation}
classify the isomorphism class of a genus-two curve, as well as realize explicit coordinates on $ \mathcal{F}_2 \setminus \mathcal{H}_1$.
The invariants can be defined \cite{igusa67}, in terms of Siegel modular forms of genus two, as:
\begin{equation}
\label{igusaclebschinsiegel}
\left [ \ \mathcal{A}, \ \mathcal{B}, \ \mathcal{C}, \ \mathcal{D} \  \right ] \ \ = \left [ \  
2^3 3  \frac{ \ \mathcal{C}_{12}\ }{ \ \mathcal{C}_{10} \ } , \ 
2^2  \mathcal{E}_{4} , \ 
2^5  \frac{ \ \mathcal{E}_{4}\mathcal{C}_{12} \ }{ \ \mathcal{C}_{10} \ } + 2^3  3^{-1}  \mathcal{E}_{6} , \ 
2^{14}  \mathcal{C}_{10}  
 \ \right ].
\end{equation}
The above expression makes sense, as for period classes 
$ [\kappa] \in  \mathcal{F}_2 \setminus \mathcal{H}_1$, one has $\mathcal{C}_{10}(\kappa) \neq 0$. 
\par In particular, the Igusa-Clebsch 
invariants realize an explicit identification between $ \mathcal{F}_2 \setminus \mathcal{H}_1$, the moduli space $\mathcal{M}_2$ of genus-two 
curves and 
the open variety:
\begin{equation}
\left \{ \ 
\left [ \ \mathcal{A}, \ \mathcal{B}, \ \mathcal{C}, \ \mathcal{D} \  \right ] \ \in \mathbb{WP}(2,4,6,10) \ \vert \ 
\mathcal{D} \neq 0 \ 
\right \} \ .
\end{equation}
\end{rem}
\par 
\subsection{The Main Theorem}
\noindent The main theorem of this paper asserts the following:
\begin{theorem}
\label{maincomp}
Let ${\rm X}(\alpha, \beta, \gamma, \delta)$ be the four-parameter family of ${\rm N}$-polarized K3 surfaces introduced in 
Section $\ref{extendedinosesection}$. For $ \gamma \neq 0$ or $ \delta \neq 0$, let $ \kappa \in \mathbb{H}_2$ 
be a period point associated with ${\rm X}(\alpha, \beta, \gamma, \delta)$. Then, one has the following identity involving 
weighted projective 
points in $\mathbb{WP}(2,3,5,6)$:
\begin{equation}
\label{mainrelation}
\left [ \ 
\alpha, \ \beta , \ \gamma, \ \delta 
\ \right ] \ = \ \left [ \ \mathcal{E}_4, \  \mathcal{E}_6, \ 2^{12}  3^{5}  \mathcal{C}_{10}, \ 2^{12} 3^{6}  \mathcal{C}_{12}  \ \right ] \ .
\end{equation}    
\end{theorem}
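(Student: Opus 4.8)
The plan is to establish the identity (\ref{mainrelation}) without computing periods, by exploiting the geometric two-isogeny diagram (\ref{introdiag}) and the known formulas on the ${\rm M}$-polarized side. The strategy has four stages. First I would restrict attention to the Humbert surface $\mathcal{H}_1$, i.e. to the subfamily ${\rm X}(\alpha,\beta,0,\delta)$, where the result of \cite{clingher3, clingher4} already gives the period in terms of the two elliptic curves ${\rm E}_1,{\rm E}_2$; concretely, Theorem \ref{tthe8e8} together with the classical relations between $({\rm j}({\rm E}_1),{\rm j}({\rm E}_2))$ and the genus-two Siegel forms restricted to $\mathbb{H}_1\times\mathbb{H}_1$ should pin down $[\alpha,\beta,0,\delta]=[\mathcal{E}_4,\mathcal{E}_6,0,2^{12}3^6\mathcal{C}_{12}]$ on $\mathcal{H}_1$ (recall $\mathcal{C}_{10}$ vanishes on $\mathcal{H}_1$, matching $\gamma=0$). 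Second, I would handle the generic locus $\gamma\neq 0$: here the alternate fibration $\varphi^{{\rm a}}_{{\rm X}}$ with its Van Geemen–Sarti involution $\Phi_{{\rm X}}$ (Proposition \ref{propcuinv}) realizes ${\rm X}(\alpha,\beta,\gamma,\delta)$ as a quotient related to the Kummer surface of ${\rm Jac}({\rm C})$, and the companion paper \cite{clingher5} identifies the genus-two curve ${\rm C}$; one then reads off the Igusa–Clebsch invariants $[\mathcal{A},\mathcal{B},\mathcal{C},\mathcal{D}]$ of ${\rm C}$ in terms of $(\alpha,\beta,\gamma,\delta)$ (this is exactly the content of the last theorem in the excerpt).

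The heart of the argument is then to invert the dictionary. Using the expression (\ref{igusaclebschinsiegel}) for the Igusa–Clebsch invariants as ratios of $\mathcal{E}_4,\mathcal{E}_6,\mathcal{C}_{10},\mathcal{C}_{12}$, and the formula $[\mathcal{A},\mathcal{B},\mathcal{C},\mathcal{D}]=[2^3 3\delta,\,2^2 3^2\alpha\gamma^2,\,2^3 3^2(4\alpha\delta+\beta\gamma)\gamma^2,\,2^2\gamma^6]$ for the curve ${\rm C}$ attached to ${\rm X}(\alpha,\beta,\gamma,\delta)$, I would solve the resulting system of weighted-projective equations for $[\alpha,\beta,\gamma,\delta]$ in terms of $[\mathcal{E}_4,\mathcal{E}_6,\mathcal{C}_{10},\mathcal{C}_{12}]$. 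The ratios $\mathcal{A}/\mathcal{D}$, $\mathcal{B}^?/\mathcal{D}$, etc. give $\delta/\gamma^6$, $\alpha/\gamma^4$, $\beta/\gamma^5$ (up to explicit constants), hence determine the point $[\alpha,\beta,\gamma,\delta]\in\mathbb{WP}(2,3,5,6)$ once one normalizes $\gamma$ against $\mathcal{C}_{10}$; tracking all powers of $2$ and $3$ through (\ref{igusaclebschinsiegel}) should produce precisely the constants $2^{12}3^5$ and $2^{12}3^6$ appearing in (\ref{mainrelation}). Finally I would argue that the identity, established on the Zariski-dense open set $\{\gamma\neq 0\}$ (equivalently $\mathcal{F}_2\setminus\mathcal{H}_1$), extends by continuity to the whole of $\mathcal{M}^{{\rm N}}_{{\rm K3}}$: both sides of (\ref{mainrelation}) are holomorphic maps $\mathcal{F}_2\to\mathbb{WP}(2,3,5,6)$ (the right side because $\mathcal{E}_4,\mathcal{E}_6,\mathcal{C}_{10},\mathcal{C}_{12}$ never vanish simultaneously, the left side by Theorem \ref{theonormalform}(b)), and they agree on a dense set, so they agree everywhere — which also subsumes the $\mathcal{H}_1$ computation of the first stage and makes it merely a consistency check rather than a logically separate case.

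The main obstacle I anticipate is bookkeeping the exact scalar normalizations — the powers of $2$ and $3$ — consistently across three different normalization conventions: the weighted scaling $(t^2\alpha,t^3\beta,t^5\gamma,t^6\delta)$ of Theorem \ref{theomoduli}, the normalization of the Igusa–Clebsch invariants in (\ref{igusaclebschinsiegel}), and the normalization of the Eisenstein and cusp forms fixed by (\ref{eisenstein}), (\ref{ccusp10}), (\ref{ccusp12}), (\ref{cusp10}), (\ref{cusp12}). A clean way to finesse this is to pass to the theta-constant expressions (\ref{tesst1}) for $\mathcal{E}_4,\mathcal{E}_6,\mathcal{C}_{10},\mathcal{C}_{12}$ in terms of $a,b,c,d$ and the polynomials $P_8,P_{12},Q_{20},Q_{24}$ of Appendix \ref{specialpol}, so that the constants are determined once and for all by a single comparison at one convenient point (e.g. a product of elliptic curves with known $j$-invariants, where everything is explicit); the rest is then forced. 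A secondary subtlety is checking that the period point $\kappa$ associated to ${\rm X}(\alpha,\beta,\gamma,\delta)$ in the statement is the \emph{same} one that the geometric correspondence (\ref{basiccor}) assigns to ${\rm Jac}({\rm C})$ — this is exactly where the results of \cite{clingher5} are invoked, and I would cite them for the compatibility of the two Nikulin involutions $\Phi_{{\rm X}},\Phi_{{\rm Y}}$ with the Hodge structures on ${\rm T}\otimes\mathbb{Q}$.
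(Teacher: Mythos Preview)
Your proposal has a genuine circularity problem. The key input you invoke --- the formula
\[
[\mathcal{A},\mathcal{B},\mathcal{C},\mathcal{D}] \;=\; [\,2^3 3\,\delta,\; 2^2 3^2\alpha\gamma^2,\; 2^3 3^2(4\alpha\delta+\beta\gamma)\gamma^2,\; 2^2\gamma^6\,]
\]
for the Igusa--Clebsch invariants of the curve $\mathrm{C}$ attached to $\mathrm{X}(\alpha,\beta,\gamma,\delta)$ --- is, in this paper, \emph{Corollary~3.7}, and it is deduced \emph{from} Theorem~\ref{maincomp} together with~(\ref{igusaclebschinsiegel}), not the other way around. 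You attribute this formula to \cite{clingher5} and to ``the last theorem in the excerpt'', but \cite{clingher5} only supplies the abstract geometric two-isogeny (that $\mathrm{Y}$ carries a canonical Kummer structure, that $\Phi_{\mathrm{X}},\Phi_{\mathrm{Y}}$ are dual Van Geemen--Sarti involutions, that the Hodge structures match); it does \emph{not} compute the invariants of $\mathrm{C}$ explicitly in terms of $(\alpha,\beta,\gamma,\delta)$. That explicit identification is precisely the content of Section~\ref{explicitcomp} of the present paper, which your proposal sidesteps.

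What the paper actually does is to write down the elliptic fibration $\varphi_{\mathrm{Y}}$ on the Kummer surface $\mathrm{Y}=\mathrm{Km}(\mathrm{Jac}(\mathrm{C}))$ twice: once via the Nikulin quotient of $(\ref{firstrightsideww2})$, giving $y_2^2=z_2^3+\mathcal{P}_{\mathrm{Y}}(\mu)z_2^2+\mathcal{Q}_{\mathrm{Y}}(\mu)z_2$ with coefficients in $\alpha,\beta,\gamma,\delta$; and once from the classical Hudson quartic / six-line picture in $\mathbb{P}^2$, producing after a long explicit theta-constant computation the form $(\ref{fifthexpr})$ with coefficients expressed through $P_8,P_{12},Q_{20},Q_{24}$. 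Matching the two Weierstrass forms yields $(\ref{almostthere})$ and hence $(\ref{mainrelation})$. That match \emph{is} the proof; the Igusa--Clebsch formula then drops out afterwards by plugging $(\ref{mainrelation})$ into $(\ref{igusaclebschinsiegel})$.

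Your inversion algebra is correct, by the way: if one \emph{does} have the Igusa--Clebsch formula independently (Kumar \cite{kumar} carries out an equivalent computation, and you could legitimately cite that as an external input), then solving $(\ref{igusaclebschinsiegel})$ for $\mathcal{E}_4,\mathcal{E}_6,\mathcal{C}_{10},\mathcal{C}_{12}$ and substituting gives exactly $[\alpha,\beta,\gamma,\delta]$ after rescaling by $t=3\gamma$ in $\mathbb{WP}(2,3,5,6)$. So the route is salvageable, but only by importing the hard step from elsewhere rather than from this paper's own corollary. As written, your argument assumes what it is meant to prove.
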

\noindent The computation leading to the above result is based on a special geometric two-isogeny of K3 surfaces, 
the details of which are presented in the companion paper \cite{clingher5}. An ouline of this transformation is provided 
here in Section $\ref{twoisogeny}$. The proof of Theorem $\ref{maincomp}$ is given in Section 
$\ref{explicitcomp}$.
\par In the light of Theorem $\ref{maincomp}$ and based on the classical considerations of Igusa \cite{igusa60, igusa67}, one obtains that the period map:
$$ {\rm per} \colon \mathcal{P}_{{\rm N}} \rightarrow \mathcal{F}_2  $$
is an isomorphism and $(\ref{mainrelation})$ gives an explicit description of its inverse map. In particular, one obtains:
\begin{cor}
The open analytic space:
 \begin{equation}
\label{weightedspace1}
\mathcal{P}_{{\rm N}} \ = \ \left \{ \
[ \ \alpha,\beta,\gamma, \delta \ ] \in  {\mathbb W}{\mathbb P}^3(2,3,5,6) \ \middle \vert \ \gamma \neq 0 \  {\rm or} \ \delta \neq 0
\ \right \} 
\end{equation}
forms a coarse moduli space for isomorphism classes of ${\rm N}$-polarized K3 surfaces.
\end{cor}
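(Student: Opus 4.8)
The plan is to identify the map underlying the relation $(\ref{mainrelation})$ with an explicit inverse of the period morphism, and then to transport the coarse moduli property from $\mathcal{M}_{{\rm K3}}^{{\rm N}}$ along the resulting isomorphism of analytic spaces. Recall from Section~\ref{modk3} that $\mathcal{M}_{{\rm K3}}^{{\rm N}}$ is a quasi-projective analytic space whose points are in natural bijection with the isomorphism classes of ${\rm N}$-polarized K3 surfaces, and that by Proposition~\ref{thmperisom} the period morphism $(\ref{permap11})$ is an isomorphism ${\rm per}\colon\mathcal{M}_{{\rm K3}}^{{\rm N}}\xrightarrow{\sim}\mathcal{F}_2$.

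First I would construct a classifying morphism $c\colon\mathcal{P}_{{\rm N}}\to\mathcal{M}_{{\rm K3}}^{{\rm N}}$. By Theorem~\ref{tth1} each $[\alpha,\beta,\gamma,\delta]\in\mathcal{P}_{{\rm N}}$ gives an ${\rm N}$-polarized K3 surface ${\rm X}(\alpha,\beta,\gamma,\delta)$; by the weighted-scaling invariance of Section~\ref{specialfeat} its isomorphism class depends only on the weighted projective class of $(\alpha,\beta,\gamma,\delta)$; and the surfaces ${\rm X}(\alpha,\beta,\gamma,\delta)$ form a holomorphic family over $\mathcal{P}_{{\rm N}}$ (away from the discriminant loci this is immediate, and across them one invokes simultaneous resolution of the family of rational double points, as in \cite{clingher4}). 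The universal property of $\mathcal{M}_{{\rm K3}}^{{\rm N}}$ as a coarse moduli space then yields $c$, which by construction sends $[\alpha,\beta,\gamma,\delta]$ to the class of ${\rm X}(\alpha,\beta,\gamma,\delta)$. Set $\pi:={\rm per}\circ c\colon\mathcal{P}_{{\rm N}}\to\mathcal{F}_2$, a holomorphic map carrying $[\alpha,\beta,\gamma,\delta]$ to the point of $\mathcal{F}_2$ determined by a period of ${\rm X}(\alpha,\beta,\gamma,\delta)$.

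Next I would use two properties of $\pi$. By part (b) of Theorem~\ref{theonormalform}, every ${\rm N}$-polarized K3 surface is isomorphic to some ${\rm X}(\alpha,\beta,\gamma,\delta)$, so $c$ and hence $\pi$ are surjective. By Theorem~\ref{maincomp}, if $\kappa$ is a period point of ${\rm X}(\alpha,\beta,\gamma,\delta)$ then $[\alpha,\beta,\gamma,\delta]=[\,\mathcal{E}_4(\kappa),\mathcal{E}_6(\kappa),2^{12}3^{5}\mathcal{C}_{10}(\kappa),2^{12}3^{6}\mathcal{C}_{12}(\kappa)\,]$; equivalently, the map $\Xi:=[\,\mathcal{E}_4,\mathcal{E}_6,2^{12}3^{5}\mathcal{C}_{10},2^{12}3^{6}\mathcal{C}_{12}\,]$ satisfies $\Xi\circ\pi={\rm id}_{\mathcal{P}_{{\rm N}}}$. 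Because $\pi$ is onto, at every point of $\mathcal{F}_2$ the four forms $\mathcal{E}_4,\mathcal{E}_6,\mathcal{C}_{10},\mathcal{C}_{12}$ do not all vanish, and $(\mathcal{C}_{10},\mathcal{C}_{12})$ does not vanish either, so $\Xi$ is a well-defined holomorphic map $\mathcal{F}_2\to\mathcal{P}_{{\rm N}}$ (this is also a classical feature of the ring of genus-two Siegel modular forms; see \cite{igusa60,igusa67}). Now $\Xi\circ\pi={\rm id}$ forces $\pi$ to be injective, so together with surjectivity $\pi$ is bijective; then $\Xi=\pi^{-1}$, and since both maps are holomorphic, $\pi$ is a biholomorphism. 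As ${\rm per}$ is an isomorphism, so is $c={\rm per}^{-1}\circ\pi\colon\mathcal{P}_{{\rm N}}\xrightarrow{\sim}\mathcal{M}_{{\rm K3}}^{{\rm N}}$. An isomorphism of analytic spaces transports the defining universal property of a coarse moduli space to its target, and $c$ realizes precisely the natural bijection between points of $\mathcal{P}_{{\rm N}}$ and isomorphism classes of normal forms ${\rm X}(\alpha,\beta,\gamma,\delta)$; hence $\mathcal{P}_{{\rm N}}$, together with this classifying map, is a coarse moduli space for isomorphism classes of ${\rm N}$-polarized K3 surfaces.

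Given Theorems~\ref{theonormalform} and \ref{maincomp}, the argument above is essentially formal, so I expect the genuine difficulty to lie upstream of it: both inputs are established not by period or Hodge-theoretic computations but through the explicit geometric two-isogeny of K3 surfaces of \cite{clingher5}. Within this proof the only points requiring real care are the holomorphy and well-definedness of $c$ across the discriminant loci, and the claim that $\Xi$ misses the cusp stratum $\gamma=\delta=0$ of ${\mathbb W}{\mathbb P}^3(2,3,5,6)$; the latter is automatic once $\pi$ is known to be surjective, since then the identity $[\gamma:\delta]=[2^{12}3^{5}\mathcal{C}_{10}:2^{12}3^{6}\mathcal{C}_{12}]\neq[0:0]$ holds throughout $\mathcal{F}_2$.
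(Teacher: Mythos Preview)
Your proposal is correct and follows essentially the paper's approach: the paper simply records (in one sentence preceding the corollary) that Theorem~\ref{maincomp} together with Igusa's classical description of $\mathcal{F}_2$ via $\mathcal{E}_4,\mathcal{E}_6,\mathcal{C}_{10},\mathcal{C}_{12}$ shows that ${\rm per}\colon\mathcal{P}_{{\rm N}}\to\mathcal{F}_2$ is an isomorphism with inverse $(\ref{mainrelation})$. Your argument fleshes this out; the only variation is that you secure surjectivity of $\pi$ by invoking Theorem~\ref{theonormalform}(b), whereas the paper relies directly on Igusa's identification of $\mathcal{F}_2$ with the open subset $\mathcal{P}_{{\rm N}}\subset\mathbb{WP}^3(2,3,5,6)$.
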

\noindent We note that for $\gamma=0$, case in which the K3 surface ${\rm X}(\alpha, \beta, 0, \delta)$ carries a canonical 
$ {\rm H} \oplus {\rm E}_8 \oplus {\rm E}_8$-polarization, an identity equivalent with $(\ref{mainrelation})$ has been established by 
the authors in \cite{clingher3}. In this work we shall therefore focus on the $\gamma \neq 0$ case. 
\par For $\gamma \neq 0$, Theorem $\ref{maincomp}$ in connection with Remark $\ref{remarkgen2}$, provides 
an explicit formula, in terms of Igusa-Clebsch invariants, for the geometric transformation underlying the Hodge theoretic correspondence     
$(\ref{ddduality2})$.
\begin{cor}
Let ${\rm X}(\alpha, \beta, \gamma, \delta)$ be a ${\rm N}$-polarized K3 surface with $ \gamma \neq 0$. Then, the genus-two curve ${\rm C}$ 
associated to ${\rm X}(\alpha, \beta, \gamma, \delta)$ by the correspondence $(\ref{ddduality2})$ has Igusa-Clebsch invariants given by:
$$ \left [ \ \mathcal{A}, \ \mathcal{B}, \ \mathcal{C} , \ \mathcal{D} \ \right ] \ = \ 
\left [ \ 2^3 3 \delta, \ 2^2  3^2 \alpha \gamma^2, \ 2^3  3^2 (4\alpha \delta + \beta \gamma) \gamma^2, \ 2^2 \gamma^6    \ \right ]. $$ 
\end{cor}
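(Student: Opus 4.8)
The plan is to derive the Igusa-Clebsch invariants of $\mathrm{C}$ directly from Theorem~\ref{maincomp} by substituting the modular-form expressions for $[\alpha,\beta,\gamma,\delta]$ into the formula~(\ref{igusaclebschinsiegel}) for the Igusa-Clebsch invariants. By Theorem~\ref{maincomp}, for a period point $\kappa$ associated with $\mathrm{X}(\alpha,\beta,\gamma,\delta)$ with $\gamma\neq 0$ we have the projective identity $[\alpha,\beta,\gamma,\delta]=[\mathcal{E}_4,\mathcal{E}_6,2^{12}3^5\mathcal{C}_{10},2^{12}3^6\mathcal{C}_{12}]$ in $\mathbb{WP}(2,3,5,6)$; that is, after choosing a representative, there is a scalar $t$ (of the appropriate weight) so that $\alpha=t^2\mathcal{E}_4(\kappa)$, $\beta=t^3\mathcal{E}_6(\kappa)$, $\gamma=t^5\cdot 2^{12}3^5\mathcal{C}_{10}(\kappa)$, $\delta=t^6\cdot2^{12}3^6\mathcal{C}_{12}(\kappa)$. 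Since $\gamma\neq 0$ forces $\mathcal{C}_{10}(\kappa)\neq 0$, we are on $\mathcal{F}_2\setminus\mathcal{H}_1$, so Remark~\ref{remarkgen2} applies and $\mathrm{C}$ is the genus-two curve with $[\mathcal{A},\mathcal{B},\mathcal{C},\mathcal{D}]$ given by~(\ref{igusaclebschinsiegel}).

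The core of the argument is then the following bookkeeping. First I would rewrite each of the four entries of~(\ref{igusaclebschinsiegel}), namely $2^3 3\,\mathcal{C}_{12}/\mathcal{C}_{10}$, $2^2\mathcal{E}_4$, $2^5\mathcal{E}_4\mathcal{C}_{12}/\mathcal{C}_{10}+2^3 3^{-1}\mathcal{E}_6$, $2^{14}\mathcal{C}_{10}$, in terms of $\alpha,\beta,\gamma,\delta$ using the substitutions above. The key observation is that $\mathcal{C}_{12}/\mathcal{C}_{10}$ is a \emph{weight-two} quantity, and under the substitution one gets $\mathcal{C}_{12}/\mathcal{C}_{10}=(3/\gamma\delta^{-1})\cdot(\ldots)$; more precisely $2^{12}3^6\mathcal{C}_{12}=\delta/t^6$ and $2^{12}3^5\mathcal{C}_{10}=\gamma/t^5$, so $\mathcal{C}_{12}/\mathcal{C}_{10}=3^{-1}\cdot(\delta/\gamma)\cdot t^{-1}$, $\mathcal{E}_4=\alpha/t^2$, $\mathcal{E}_6=\beta/t^3$, and $\mathcal{C}_{10}=2^{-12}3^{-5}\gamma t^{-5}$. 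Plugging these in, each Igusa-Clebsch entry becomes a monomial in $\alpha,\beta,\gamma,\delta$ times the corresponding power of $t^{-1}$ matching its weight in $\mathbb{WP}(2,4,6,10)$: $\mathcal{A}=2^3 3\cdot 3^{-1}(\delta/\gamma)t^{-1}=2^3 3^0(\delta/\gamma)t^{-1}$—but wait, $\mathcal{A}$ has weight $2$, so one must carry one further factor; I would track the $\gamma$-denominators carefully and clear them by rescaling the projective representative by a power of $\gamma$. After clearing denominators by multiplying the whole tuple by the weighted action of $\gamma$ (legitimate since $[\mathcal{A},\ldots,\mathcal{D}]$ lives in weighted projective space and $\gamma\neq 0$), the four entries collapse to $[2^3 3\,\delta,\ 2^2 3^2\alpha\gamma^2,\ 2^3 3^2(4\alpha\delta+\beta\gamma)\gamma^2,\ 2^2\gamma^6]$, which is the claimed answer.

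The main obstacle is purely organizational: keeping the weighted-projective scalings consistent. One must simultaneously (i) track the auxiliary scalar $t$ from Theorem~\ref{maincomp}, which appears to a weight-matching power in each slot; (ii) clear the $\mathcal{C}_{10}=\gamma t^{-5}$ denominators appearing in $\mathcal{A}$ and $\mathcal{C}$, which requires multiplying through by the correct power of $\gamma$ and adjusting with the $\mathbb{WP}(2,4,6,10)$-action; and (iii) check that the mixed term $2^5\mathcal{E}_4\mathcal{C}_{12}/\mathcal{C}_{10}+2^3 3^{-1}\mathcal{E}_6$ really produces $2^3 3^2(4\alpha\delta+\beta\gamma)\gamma^2$ up to the overall scalar—this is the one slot where two summands of a priori different shapes must combine, and it is where a sign or power-of-$2,3$ slip is most likely. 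Explicitly, $2^5\mathcal{E}_4\mathcal{C}_{12}/\mathcal{C}_{10}=2^5\cdot 3^{-1}\alpha\delta\gamma^{-1}t^{-3}$ and $2^3 3^{-1}\mathcal{E}_6=2^3 3^{-1}\beta t^{-3}$, so the sum is $3^{-1}\gamma^{-1}t^{-3}(2^5\alpha\delta+2^3\beta\gamma)=2^3 3^{-1}\gamma^{-1}t^{-3}(4\alpha\delta+\beta\gamma)$; multiplying the whole tuple through by $\gamma^6 t^{6}$ (the weight-adjusting factor that also kills $t$) and rescaling by a common power of $3$ then yields the stated tuple. I would close by noting the result is independent of the choice of period point $\kappa$ in the $\Gamma_2$-orbit, since both sides transform by the same weighted-projective scaling, and independent of the representative $(\alpha,\beta,\gamma,\delta)$ by the weighted-projective action of Theorem~\ref{theomoduli}.
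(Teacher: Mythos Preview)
Your approach is correct and is exactly what the paper does: the corollary is obtained by substituting the identification $[\alpha,\beta,\gamma,\delta]=[\mathcal{E}_4,\mathcal{E}_6,2^{12}3^5\mathcal{C}_{10},2^{12}3^6\mathcal{C}_{12}]$ from Theorem~\ref{maincomp} into the Igusa--Clebsch formula~(\ref{igusaclebschinsiegel}) of Remark~\ref{remarkgen2}. One small wording fix: the phrase ``multiplying the whole tuple through by $\gamma^6 t^6$'' is not literally a valid operation in $\mathbb{WP}(2,4,6,10)$; what you actually do (and what your computations in fact show) is rescale by $s$ with $s^2=3\gamma t$, so that the slots get multiplied by $s^2,s^4,s^6,s^{10}$ respectively.
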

\noindent The formula given by the above corrolary can be seen to agree with the computation done by Kumar \cite{kumar}. 
\begin{rem}
\label{remarkondiscr}
As a special remark, note that, under the formulas in $(\ref{mainrelation})$, one obtains the expected period interpretation 
for the discriminats $ (\ref{discrr1}) $ and $ (\ref{hum4}) $ of the quartic family ${\rm X}(\alpha, \beta, \gamma, \delta)$. Up to scaling 
by a constant, one has: 
$$  \ \mathcal{D}_1(\alpha, \beta, \gamma, \delta) \cdot \mathcal{D}_4(\alpha, \beta, \gamma, \delta) \  
\ = \ 
{\rm P}(\mathcal{E}_{4}, \mathcal{E}_{6}, \mathcal{C}_{10}, \mathcal{C}_{12}) 
\ = \ 
\mathcal{C}^2_{35} \ .$$
\end{rem}

\section{A Geometric Two-Isogeny of K3 Surfaces}
\label{twoisogeny}
This section outlines a purely geometric transformation upon which the main computation of this paper is based. For 
details regarding the transformation, as well as proofs, we refer the reader to the companion paper \cite{clingher5}. 
Various parts of the construction have also been discussed by Dolgachev (the Appendix Section of \cite{galluzzi}) 
and Kumar \cite{kumar}. 
\subsection{Elliptic Fibrations on ${\rm N}$-polarized K3 Surfaces}  
\label{stanalt}
\noindent Let $({\rm X},i)$ be a ${\rm N}$-polarized K3 surface. Assume also that the lattice polarization 
$i$ cannot be extended to a polarization by the rank-eighteen lattice $ {\rm H} \oplus {\rm E}_8 \oplus {\rm E}_8$. 
We are therefore in the case associated, under the Hodge theoretic correspondence (\ref{basiccor}), to principally polarized 
abelian surfaces obtained as Jacobians of genus-two curves.
\par By standard results on jacobian elliptic fibrations 
on K3 surfaces \cite{clingher3, kondo, shapiro}, the lattice polarization $i$ determines a canonical elliptic fibration 
$$ \varphi^{{\rm s}}_{{\rm X}} \colon {\rm X} \rightarrow \mathbb{P}^1 $$ 
with a section ${\rm S}^{{\rm s}}$ and two singular fibers of Kodaira types ${\rm II}^*$ and ${\rm III}^*$, respectively. We shall refer to 
$ \varphi^{{\rm s}}_{{\rm X}} $ as the {\it standard} elliptic fibration of ${\rm X}$. In the context of $\varphi^{{\rm s}}_{{\rm X}}$, 
one has the following dual configuration of rational curves on the K3 surface ${\rm X}$.
\begin{equation}
\label{diagg55}
\def\objectstyle{\scriptstyle}
\def\labelstyle{\scriptstyle}
\xymatrix @-0.9pc  {
\stackrel{a_1}{\bullet} \ar @{-} [r] 
& \stackrel{a_2}{\bullet} \ar @{-} [r]&
\stackrel{a_3}{\bullet} \ar @{-} [r] \ar @{-} [d] &
\stackrel{a_5}{\bullet} \ar @{-} [r]  &
\stackrel{a_6}{\bullet} \ar @{-} [r] &
\stackrel{a_7}{\bullet} \ar @{-} [r] &
\stackrel{a_8}{\bullet} \ar @{-} [r] &
\stackrel{a_9}{\bullet} \ar @{-} [r] &
\stackrel{S^{{\rm s}}}{\bullet} \ar @{-} [r] &
\stackrel{b_8}{\bullet} \ar @{-} [r] &
\stackrel{b_7}{\bullet} \ar @{-} [r] &
\stackrel{b_6}{\bullet} \ar @{-} [r] &
\stackrel{b_4}{\bullet} \ar @{-} [r] \ar @{-} [d] &
\stackrel{b_3}{\bullet} & \stackrel{b_2}{\bullet} \ar @{-} [l]
 & \stackrel{b_1}{\bullet} \ar @{-} [l] \\
 &   & \stackrel{a_4}{\bullet} & & & & & & &  &    & & \stackrel{b_5}{\bullet}  & &   \\
} 
\end{equation}
The fiber ${\rm F}^{{\rm s}}$ of the elliptic fibration $\varphi^{{\rm s}}_{{\rm X}}$ is represented by the line bundle: 
$$ 
%\mathcal{O}_{{\rm X}} \left ( {\rm F}^{{\rm s}} \right ) = 
\mathcal{O}_{{\rm X}} \left ( 2a_1+4a_2+6a_3+3a_4+5a_5+4a_6+3a_7+2a_8+a_9  \right )  = 
\mathcal{O}_{{\rm X}} \left ( b_1+2b_2+3b_3+4b_4+2b_5+3b_6+2b_7+b_8 \right ). $$
The ${\rm N}$-polarization of ${\rm X}$ appears in this context as:
$$ \langle  {\rm F}^{{\rm s}},{\rm S}^{{\rm s}} \rangle \oplus \langle a_1, a_2, \cdots a_8 \rangle \oplus \langle b_1, b_2, \cdots b_7 \rangle. $$
A second {\it alternate} elliptic fibration 
$ \varphi^{{\rm a}}_{{\rm X}} \colon {\rm X} \rightarrow \mathbb{P}^1 $ is obtained via the classical theory of Kodaira \cite{kodaira}. This elliptic pencil is associated with the line bundle:
$$ 
\mathcal{O}_{{\rm X}} \left ( \   
a_2 + a_4 + 2 (a_3+a_5+a_6+a_7 + a_8+{\rm S}^{{\rm s}}+b_8+b_7+b_6+ b_4) + b_3 + b_5 \ 
\right ). 
$$
The alternate elliptic fibration $\varphi^{{\rm a}}_{{\rm X}}$ has two disjoint sections 
$$ {\rm S}^{{\rm a}}_1= a_1, \ \ {\rm S}^{{\rm a}}_2 = b_2. $$ 
The assumption that the polarization $i$ does not extend to a lattice polarization by 
$ {\rm H} \oplus {\rm E}_8 \oplus {\rm E}_8$ implies the existence of an additional rational 
curve $c$, such that $ b_1 + c$ forms a singular fiber of type ${\rm I}_2$ (or ${\rm III}$) in the elliptic fibration 
$\varphi^{{\rm a}}_{{\rm X}}$. The diagram $(\ref{diagg55})$ completes to the following nineteen-curve diagram on ${\rm X}$.    
\begin{equation}
\label{diagg88}
\def\objectstyle{\scriptstyle}
\def\labelstyle{\scriptstyle}
\xymatrix @-0.9pc  {
& \stackrel{a_1}{\bullet} \ar @{-} [r] \ar @{-}[dd]  
& \stackrel{a_2}{\bullet} \ar @{-} [r]&
\stackrel{a_3}{\bullet} \ar @{-} [r] \ar @{-} [d] &
\stackrel{a_5}{\bullet} \ar @{-} [r]  &
\stackrel{a_6}{\bullet} \ar @{-} [r] &
\stackrel{a_7}{\bullet} \ar @{-} [r] &
\stackrel{a_8}{\bullet} \ar @{-} [r] &
\stackrel{a_9}{\bullet} \ar @{-} [r] &
\stackrel{S^{{\rm s}}}{\bullet} \ar @{-} [r] &
\stackrel{b_8}{\bullet} \ar @{-} [r] &
\stackrel{b_7}{\bullet} \ar @{-} [r] &
\stackrel{b_6}{\bullet} \ar @{-} [r] &
\stackrel{b_4}{\bullet} \ar @{-} [r] \ar @{-} [d] &
\stackrel{b_3}{\bullet} \ar @{-} [r] & \stackrel{b_2}{\bullet} \ar @{-}[dd]
 & \\
 & & &  \stackrel{a_4}{\bullet} &  & & & & &  &       &    & & \stackrel{b_5}{\bullet}  & & \\
  & \stackrel{c}{\bullet} \ar @{=}[rrrrrrrrrrrrrr]& &  & &  & & & & &     & & & & &   \stackrel{b_1}{\bullet}  & \\
} 
\end{equation}
\subsection{The Nikulin Construction}
As proved in \cite{clingher5}, the section $b_2$ has order two, as a member of the Mordell-Weil group ${\rm MW}(\varphi^{{\rm a}}_{{\rm X}}, a_1)$. 
Translations by $b_2$ in the smooth fibers of the elliptic fibration $\varphi^{{\rm a}}_{{\rm X}}$ extend then to a canonical 
Van Geemen-Sarti\footnote{For details regarding this concept, we refer the reader to Definition 1.1 of \cite{clingher5}.} 
involution 
\begin{equation}
\label{vgsonk3}
 \Phi_{{\rm X}} \colon {\rm X} \rightarrow {\rm X} \ .
 \end{equation} 
 The involution  $\Phi_{{\rm X}} $ acts on the curves of 
diagram $(\ref{diagg88})$ as a horizontal left-right flip. In particular, $\Phi_{{\rm X}}$ establishes a Shioda-Inose 
structure \cite{shiodainose, morrison1}, as it exchanges the two ${\rm E}_8$-configurations:
$$ \langle a_1, a_2,a_3, a_4.a_5, a_6, a_7, a_8 \rangle, \ \ \ \langle b_2,b_3, b_4, b_5, b_6, b_7, {\rm S}^{{\rm s}} \rangle \ .$$    
At this point one performs the Nikulin construction. Take the quotient of ${\rm X}$ by the involution $\Phi_{{\rm X}}$ which produces 
a singular surface with eight rational double points of type ${\rm A}_1$. Then take the minimal resolution of this quotient, hence obtaining 
a new K3 surface ${\rm Y}$. The construction exhibits a rational two-to-one map 
\begin{equation}
\label{projdouble}
{\rm p}_{\Phi_{{\rm X}}} \colon {\rm X} \dashrightarrow {\rm Y}. 
\end{equation} 
Moreover, as explained in \cite{clingher5}, the surface ${\rm Y}$ 
inherits an elliptic fibration 
\begin{equation}
\label{inducedfromalt} 
\varphi_{{\rm Y}} \colon {\rm Y} \rightarrow \mathbb{P}^1 
\end{equation} 
which is induced from the alternate fibration on ${\rm X}$. The elliptic fibration $\varphi_{{\rm Y}}$ carries a singular fiber of 
Kodaira type ${\rm I}_5^*$ and two 
disjoint sections $\widetilde{{\rm S}}_1$, $\widetilde{{\rm S}}_2$. As before, the section $\widetilde{{\rm S}}_2$ determines an element of order two in the Mordell-Weil group 
${\rm MW}(\varphi_{{\rm Y}},  \widetilde{{\rm S}}_1) $ 
and fiber-wise translations by $\widetilde{{\rm S}}_2$ extend to a dual Van Geemen-Sarti involution 
\begin{equation}
\label{dualvgs}
 \Phi_{{\rm Y}} \colon {\rm Y} \rightarrow {\rm Y} .
\end{equation} 
The Nikulin construction associated to $\Phi_{{\rm Y}}$ recovers back the 
K3 surface ${\rm X}$ as well as its alternate fibration. Hence, 
surfaces ${\rm X}$ and ${\rm Y}$ are naturally related by a geometric two-isogeny of K3 surfaces.           
\begin{equation}
\label{chartt1}
\xymatrix %@-1.2pc
{
{\rm Y} \ar @(dl,ul) _{\Phi_{{\rm Y}}} \ar [dr] _{\varphi_{{\rm Y}}} \ar @/_0.5pc/ @{-->} _{{\rm p}_{\Phi_{{\rm Y}}}} [rr]
&
& {\rm X} \ar @(dr,ur) ^{\Phi_{{\rm X}}} \ar [dl] ^{\varphi^{{\rm a}}_{{\rm X}}} \ar @/_0.5pc/ @{-->} _{{\rm p}_{\Phi_{{\rm X}}}} [ll] \\
& \mathbb{P}^1 & \\
}
\end{equation}
A key observation at this point is that the K3 surface ${\rm Y}$ carries a canonical Kummer structure. Let us summarize this fact. 
The Nikulin construction associated to the involution $ \Phi_{{\rm X}} $ induces a natural push-forward morphism at the cohomology 
level:
\begin{equation}
\label{morphhh}
( {\rm p}_{\Phi_{{\rm X}}} )_* \colon {\rm H}^2( {\rm X}, \mathbb{Z}) \rightarrow {\rm H}^2( {\rm Y}, \mathbb{Z}).  
\end{equation}
Denote by ${\rm U}_i$, with $1 \leq i \leq 8$, the exceptional rational curves on ${\rm Y}$ obtained from resolving the singularities 
associated with the eight fixed points of involution $ \Phi_{{\rm X}} $. The curves ${\rm U}_1, {\rm U}_2, \cdots {\rm U}_8$ form 
the even-eight configuration associated with 
the rational two-to-one map $(\ref{projdouble})$. The rank-eight lattice $\mathcal{N}$ defined as the minimal primitive sublattice of 
$ {\rm H}^2( {\rm X}, \mathbb{Z}) $ containing ${\rm U}_1, {\rm U}_2, \cdots {\rm U}_8$ is a Nikulin lattice. One has:
$$ \left \langle \ ( {\rm p}_{\Phi_{{\rm X}}} )_*(x), \ y \ \right \rangle_{{\rm Y}} \ = \ 0, \ $$
for any $ x \in {\rm H}^2( {\rm X}, \mathbb{Z})  $ and $ y \in \mathcal{N}$. 
\par Set then $ \mathcal{G} $ as the rank-seventeen sublattice 
of ${\rm NS}({\rm Y})$ given by the orthogonal direct product 
$$ ( {\rm p}_{\Phi_{{\rm X}}} )_*( i({\rm N})) \ \oplus \ \mathcal{N} \ . $$ 
Denote by $i({\rm N})^{\perp}$ and $\mathcal{G}^{\perp}$ the orthogonal complements in ${\rm H}^2( {\rm X}, \mathbb{Z})$ and 
${\rm H}^2( {\rm Y}, \mathbb{Z})$, respectively. In this context, one has:   
\begin{lem} \ 
\begin{itemize}
\item [(a)] The restriction of $(\ref{morphhh})$ induces a Hodge isometry 
\begin{equation}
\label{izo1}
 ( {\rm p}_{\Phi_{{\rm X}}} )_* \colon i({\rm N})^{\perp}(2) \ \stackrel{\simeq}{\longrightarrow} \  \mathcal{G}^{\perp} \ .  
 \end{equation}
\item [(b)] Let $\mathcal{K}$ be the rank-sixteen Kummer lattice\footnote{As defined in \cite{morrison1} or \cite{nikulin3}}. One has a canonical primitive lattice embedding:
\begin{equation}
\label{kummerstructure}
\mathcal{K} \oplus (4) \ \hookrightarrow \ \mathcal{P}. 
\end{equation}
\end{itemize} 
\end{lem}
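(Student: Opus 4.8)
The plan is to prove the two statements by tracking the geometry of the Nikulin (even-eight) construction through cohomology, exactly as in the companion paper and in the $\mathrm{M}$-polarized case treated in \cite{clingher4}. The key input is that $\mathrm{Y}$ is a Kummer surface: the Van Geemen--Sarti involution $\Phi_{\mathrm{X}}$ exchanges the two $\mathrm{E}_8$-configurations in diagram $(\ref{diagg88})$, hence realizes a Shioda--Inose structure on $\mathrm{X}$, and the quotient K3 surface $\mathrm{Y}$ obtained from the Nikulin construction is therefore the Kummer surface of a principally polarized abelian surface $\mathrm{A}$ --- namely the surface $(\mathrm{A},\Pi)$ attached to $(\mathrm{X},i)$ by $(\ref{basiccor})$. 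This is the statement whose proof is deferred to \cite{clingher5}, and I would invoke it.

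For part (a): the push-forward $({\rm p}_{\Phi_{\mathrm{X}}})_*$ on $\mathrm{H}^2$ is, up to the standard factor of $2$ coming from a degree-two rational map between K3 surfaces, an isometric embedding once one restricts to the orthogonal complement of the classes that become exceptional. Concretely, $({\rm p}_{\Phi_{\mathrm{X}}})^* ({\rm p}_{\Phi_{\mathrm{X}}})_* = 1 + \Phi_{\mathrm{X}}^*$ on $\mathrm{H}^2(\mathrm{X},\mathbb{Z})$, and on the $\Phi_{\mathrm{X}}$-invariant part this is multiplication by $2$; dually $({\rm p}_{\Phi_{\mathrm{X}}})_* ({\rm p}_{\Phi_{\mathrm{X}}})^* = 2$ on the image. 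Since $\Phi_{\mathrm{X}}$ acts trivially on $i(\mathrm{N})^{\perp}$ (the transcendental-type part $\mathrm{T}\otimes\mathbb{Q}$ is fixed because the Shioda--Inose structure preserves the Hodge structure, cf.\ the discussion around $(\ref{basiccor})$), the map $({\rm p}_{\Phi_{\mathrm{X}}})_*$ restricted to $i(\mathrm{N})^{\perp}$ multiplies the form by $2$, i.e.\ it is an isometry $i(\mathrm{N})^{\perp}(2)\to\mathrm{H}^2(\mathrm{Y},\mathbb{Z})$. Its image is orthogonal to the even-eight classes $\mathrm{U}_1,\dots,\mathrm{U}_8$ by the vanishing relation $\langle ({\rm p}_{\Phi_{\mathrm{X}}})_*(x),y\rangle_{\mathrm{Y}}=0$ stated just above the lemma, hence orthogonal to $\mathcal{N}$, and it is also orthogonal to $({\rm p}_{\Phi_{\mathrm{X}}})_*(i(\mathrm{N}))$ because orthogonality of $x$ to $i(\mathrm{N})$ in $\mathrm{H}^2(\mathrm{X})$ is preserved; so the image lands in $\mathcal{G}^{\perp}$. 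A rank and discriminant count --- $\mathrm{rk}\,i(\mathrm{N})^{\perp}=22-17=5=22-17=\mathrm{rk}\,\mathcal{G}^{\perp}$, and the discriminant groups match because $i(\mathrm{N})^{\perp}(2)$ has the right discriminant form (this is where one uses that $\mathcal{N}$ is a Nikulin lattice of discriminant $2^6$ sitting saturated inside, together with Nikulin's gluing calculus) --- forces the embedding to be onto, and it is automatically a Hodge isometry since $({\rm p}_{\Phi_{\mathrm{X}}})_*$ is a morphism of Hodge structures.

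For part (b): the sixteen $(-2)$-curves on the Kummer surface $\mathrm{Y}$ arising as blow-ups of the sixteen two-torsion points of $\mathrm{A}$ span (primitively) the Kummer lattice $\mathcal{K}$ by definition \cite{morrison1, nikulin3}. One must produce, in addition, a class $\ell$ with $\ell^2 = 4$ that is orthogonal to $\mathcal{K}$ and such that $\mathcal{K}\oplus(4)\hookrightarrow\mathcal{P}$ is primitive. The natural candidate is the pullback (or the appropriate combination) of the polarization class $\Pi$ on $\mathrm{A}$: the transfer of a principal polarization to the Kummer surface gives a class of self-intersection $2\cdot(\Pi^2)/\ldots$; concretely $\Pi^2=2$ on the abelian surface and the induced class on $\mathrm{Km}(\mathrm{A})$ has square $4$, and it is orthogonal to all sixteen exceptional curves because $\Pi$ does not meet the exceptional divisors. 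Primitivity of $\mathcal{K}\oplus(4)$ inside $\mathcal{P}$ --- recall $\mathcal{P}$ here should read $\mathcal{G}^{\perp}$, or more precisely the relevant overlattice containing both $\mathcal{K}$ and the image of $i(\mathrm{N})^{\perp}(2)$; I would check the indexing against \cite{clingher5} --- again follows from the discriminant bookkeeping of Nikulin lattices and Kummer lattices, both of which have well-understood discriminant forms ($(\mathbb{Z}/2)^6$ and $(\mathbb{Z}/2)^6$ respectively), together with the identification of $\mathcal{G}^{\perp}$ from part (a).

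The main obstacle is the discriminant-form bookkeeping: one must verify that the image of $i(\mathrm{N})^{\perp}(2)$ together with $\mathcal{N}$ and $\mathcal{K}$ glue correctly inside $\mathrm{H}^2(\mathrm{Y},\mathbb{Z})$ so that surjectivity in (a) and primitivity in (b) hold. This is a finite computation with Nikulin's quadratic forms on discriminant groups, but it is the delicate part because the even-eight lattice $\mathcal{N}$ and the Kummer lattice $\mathcal{K}$ are not themselves saturated and their embeddings interact; the cleanest route is to lean on the general even-eight/Kummer gluing results of \cite{morrison1, nikulin3} rather than redo the lattice theory from scratch, and to pin down the class $\ell$ of square $4$ explicitly as the image of a section or fiber class in diagram $(\ref{diagg88})$ under $({\rm p}_{\Phi_{\mathrm{X}}})_*$, whose self-intersection and orthogonality properties can then be read off directly.
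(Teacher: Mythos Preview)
The paper does not actually prove this lemma: Section~\ref{twoisogeny} opens by stating that ``for details regarding the transformation, as well as proofs, we refer the reader to the companion paper \cite{clingher5},'' and the lemma is simply asserted. Your sketch is therefore being compared against what the companion paper presumably contains, namely the standard Shioda--Inose argument of \cite{morrison1}, and on that score your outline follows the expected route and is essentially correct.

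One point deserves more care. You assert that $\Phi_{\mathrm{X}}^*$ acts trivially on $i(\mathrm{N})^\perp$ because ``the Shioda--Inose structure preserves the Hodge structure.'' A Nikulin involution always fixes $H^{2,0}$ and hence the transcendental lattice $T_{\mathrm{X}}$, but $i(\mathrm{N})^\perp$ may strictly contain $T_{\mathrm{X}}$ when the Picard rank exceeds~$17$, so symplecticity alone does not suffice. What makes the claim true here is the specific curve configuration: the left-right flip of diagram~$(\ref{diagg88})$ has anti-invariant lattice spanned by classes of the form $a_i - \Phi_{\mathrm{X}}(a_i)$ and $c - b_1$, and one checks (using that $c$ lies in the fiber class of $\varphi_{\mathrm{X}}^{\mathrm a}$ together with $b_1$, and that $b_8$ is determined by the ${\rm III}^*$ fiber relation) that every such class already lies in $i(\mathrm{N})$. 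Hence the anti-invariant lattice sits inside $i(\mathrm{N})$, forcing $i(\mathrm{N})^\perp$ into the invariant part. This is the step that genuinely uses the geometry of this particular Shioda--Inose structure.

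Your observation about the undefined symbol $\mathcal{P}$ is apt: the paper uses $\mathcal{P}$ in part~(b) and in $(\ref{izo2})$ without introducing it, and from the rank count (both $\mathcal{K}\oplus(4)$ and $\mathcal{G}$ have rank~$17$) and the parallel roles of $\mathcal{G}^\perp$ and $\mathcal{P}^\perp$ in $(\ref{izo1})$--$(\ref{izo3})$, it should indeed be read as $\mathcal{G}$.
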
  
\noindent By Nikulin's criterion \cite{nikulin3}, the lattice embedding $(\ref{kummerstructure})$ determines a canonical Kummer structure on ${\rm Y}$,  
that is ${\rm Y}$ is a Kummer surface associated to a principally 
polarized abelian surface $({\rm A}, \Pi)$ and the sixteen exceptional curves determining the Kummer structure are explicitly determined. 
Let $ \pi \colon {\rm A} \dashrightarrow {\rm Y}$ be the rational two-to-one map 
associated to this Kummer structure. By restricting the map $\pi_*$ to the orthogonal complement of the principal 
polarization $\Pi$ in 
${\rm H}^2({\rm A}, \mathbb{Z})$, one obtains a classical Hodge isometry:
\begin{equation}
\label{izo2}
 \pi_* \colon \langle \Pi \rangle ^{\perp}(2) \ \stackrel{\simeq}{\longrightarrow} \  \mathcal{P}^{\perp} \ .  
\end{equation}
Connecting $(\ref{izo1})$ and $(\ref{izo2})$, one obtains an isometry of Hodge structures:
\begin{equation}
\label{izo3}
 (\pi_*)^{-1} \circ ( {\rm p}_{\Phi_{{\rm X}}} )_* \colon i({\rm N})^{\perp} \ \stackrel{\simeq}{\longrightarrow} \ \langle \Pi \rangle ^{\perp}  \ .
\end{equation}  
Both lattices $ \langle \Pi \rangle ^{\perp} $ and $i({\rm N})^{\perp}$ are isometric to ${\rm H} \oplus {\rm H} \oplus (-2) $. Hence, via 
the considerations of Section \ref{hodgesiegel}, one obtains that $({\rm A}, \Pi)$ is the abelian surface associated to $({\rm X},i)$ 
by the Hodge-theoretic correspondence $(\ref{basiccor})$. In particular $({\rm A}, \Pi)$ is isomorphic, as principally polarized 
abelian surface, to  
$$ \left ( \ {\rm Jac}({\rm C}), \ \mathcal{O}_{{\rm Jac}({\rm C})}(\Theta) \   \right ) $$ 
with ${\rm C}$ a well-defined complex non-singular genus-two curve.
\subsection{Elliptic Fibrations in the Context of the Kummer Structure}
\label{kummerside}
\noindent As it turns out, the elliptic fibration $ \varphi_{{\rm Y}}$, as well as the Van Geemen-Sarti involution $ \Phi_{{\rm Y}}$ can be 
explicitly described from classical features of the Kummer surface $ {\rm Y}={\rm Km}({\rm Jac}({\rm C}))$. In order to present this description, 
we shall first need to establish some notations.  
\subsubsection{Classical Facts on $ {\rm Km}({\rm C})$}
\noindent Let ${\rm C}$ be a complex non-singular genus-two curve. Assume a choice of labeling, $ a_1, a_2, \cdots a_6$, for the six ramification points of 
the canonical hyperelliptic structure. The Jacobian surface ${\rm Jac}({\rm C})$ parametrizes the degree-zero line bundles on
${\rm C}$. It comes equipped with a natural abelian group structure and contains sixteen
two-torsion points that form a subgroup
$${\rm Jac}({\rm C})_2 \ \simeq \ \left ( \Zee / 2 \Zee \right )^4. $$
The two-torsion points can be described as follows. Denote by ${\rm p}_{\varnothing}$ the neutral element of ${\rm Jac}({\rm C})$, i.e. the point
associated to the trivial line bundle of ${\rm C}$. The fifteen points of order two are then given by ${\rm p}_{ij}$ representing
the line bundles
$$ \mathcal{O}_{{\rm C}} \left ( a_i+a_j-2a_1 \right ),  \ \ \ 1 \leq i<j \leq 6.$$
The abelian group law on ${\rm Jac}({\rm C})_2$ can be seen as
$$ {\rm p}_{{\rm U}} + {\rm p}_{{\rm V}}  =  {\rm p}_{{\rm W}} $$
where ${\rm U}$, ${\rm V}$ and ${\rm W}$ are subsets of $\{ 1, 2, \cdots , 6 \}$, containing either zero or two elements, and:
\begin{equation}
\label{level2}
 {\rm W} \ = \
\begin{cases}
{\rm U} & {\rm if} \ \ {\rm V} = \varnothing \\
{\rm V} & {\rm if} \ \ {\rm U} = \varnothing \\
\varnothing & {\rm if} \ \ {\rm U} = {\rm V}  \\
\left ( {\rm U} \cup {\rm V} \right ) \setminus \left ( {\rm U} \cap {\rm V} \right )   & {\rm if} \ \ \vert {\rm U} \cap {\rm V} \vert = 1  \\
\{ 1, 2, \cdots , 6 \} \setminus \left ( {\rm U} \cup {\rm V} \right )  & {\rm if} \ \  {\rm U} \neq \varnothing, \ {\rm V} \neq \varnothing
\ {\rm and} \ {\rm U} \cap {\rm V}  = \varnothing
\end{cases}.
\end{equation}
The choice of labeling of the ramification points of ${\rm C}$ defines a level-two structure on ${\rm Jac}({\rm C})$.
\par Consider the Abel-Jacobi embedding associated to the Weierstrass point $a_0$, i.e:
$$ {\rm C} \hookrightarrow {\rm Jac}({\rm C}),  \ \ \
x \rightsquigarrow \mathcal{O}_{{\rm C}} \left ( x-a_1 \right ) $$ and denote by $\Theta_{\varnothing}$ the image of ${\rm C}$ under this map. This
is an irreducible curve on ${\rm Jac}({\rm C})$, canonically isomorphic to ${\rm C}$ and containing the six two-torsion points:
${\rm p}_{\varnothing}$, ${\rm p}_{12}$,   ${\rm p}_{13}$, ${\rm p}_{14}$, ${\rm p}_{15}$, ${\rm p}_{16}$. Let then $\Theta_{ij}$ be the image
of $\Theta_{\varnothing}$ under the translation by the order-two point ${\rm p}_{ij}$. Each of the resulting sixteen Theta divisors
$\Theta_{\varnothing}$, $\Theta_{ij}$ contains exactly six of the sixteen two-torsion points. For instance $ \Theta_{1j}$, for $2 \leq j \leq 6$, contains
$$ {\rm p}_{1j}, \ {\rm p}_{2j}, \  \cdots \  {\rm p}_{j-1 j} , \ {\rm p}_{\varnothing} , \ {\rm p}_{jj+1},  \ \cdots \   {\rm p}_{j6}, $$
while $\Theta_{ij}$, for $2 \leq i < j \leq 6$, contains:
$$ {\rm p}_{1i}, \ {\rm p}_{1j}, \ {\rm p}_{ij}, \ {\rm p}_{kl}, \ {\rm p}_{km}, \ {\rm p}_{lm} $$
where $\{ k,l,m \} = \{1,2, \cdots 6 \} \setminus \{ 0,i,j\}$. Each two-torsion point lies on precisely six of the sixteen Theta divisors.
\par The sixteen two-torsion points together with the sixteen Theta divisors on ${\rm Jac}({\rm C})$ yield, via the Kummer construction, 
a classical configuration of thirty-two smooth rational curves on ${\rm Km}({\rm C})$ - the $(16;6)$ configuration. Sixteen of the curves, 
denoted ${\rm E}_{\varnothing}$, ${\rm E}_{ij}$ are 
the exceptional curves associated to the two-torsion points ${\rm p}_{\varnothing}$, ${\rm p}_{ij}$ of ${\rm Jac}({\rm C})$, respectively.
The remaining sixteen rational curves, denoted $\Delta_{\varnothing}$, $\Delta_{ij}$ are the proper transforms of the
images of the Theta divisors $\Theta_{\varnothing}$, $\Theta_{ij}$, respectively. Following the classical terminology, we shall
refer to these latter sixteen curves as {\it tropes}.
\par On the Jacobian surface ${\rm Jac}({\rm C})$, one has $h^0({\rm Jac}({\rm C}), 2 \Theta_{\varnothing} ) = 4 $ and the linear system 
$\vert 2 \Theta_{\varnothing} \vert $ is base point free. The associated morphism:
$$ \varphi_{\vert 2 \Theta_{\varnothing} \vert} \colon {\rm Jac}({\rm C}) \rightarrow \mathbb{P}^3 $$
is generically two-to-one and its image
$${\rm S}({\rm C}) = \varphi_{\vert 2 \Theta_{\varnothing} \vert} \left ( {\rm Jac}({\rm C}) \right ) \ \subset \ \mathbb{P}^3 $$
is a quartic surface. One has a canonical identification
$$ {\rm S}({\rm C}) \ = \ {\rm Jac}({\rm C}) / \{ \pm {\rm id} \} $$
and the images of the sixteen two-torsion points of ${\rm Jac}({\rm C})$
are singularities on ${\rm S}({\rm C})$: rational double points of type ${\rm A}_1$. By convention, we shall also label these
sixteen singularities as ${\rm p}_{\varnothing}$, ${\rm p}_{ij}$. The minimal resolution of ${\rm S}({\rm C})$
is then isomorphic to the Kummer surfaces ${\rm Km}({\rm C}) $.
\begin{equation}
\label{diagc1}
\xymatrix {
{\rm Jac}({\rm C}) \ar @{->} _{\varphi_{\vert 2 \Theta_{\varnothing} \vert}} [dr] \ar @{-->} [r] & {\rm Km}({\rm C}) \ar @{->} ^{\sigma} [d] \\
%{\rm Km}({\rm C}) \ar @{->} ^{\rho} [r] 
& \mathbb{P}^3 & &  \\
}
\end{equation}
In this context, the sixteen curves ${\rm G}_{\varnothing}$, ${\rm G}_{ij}$ are resulting from resolving the sixteen singular 
points of ${\rm S}({\rm C})$. The tropes $\Delta_{\varnothing}$, $\Delta_{ij}$ are conics resulting
from intersecting the quartic surface ${\rm S}({\rm C})$ with sixteen special planes of $\mathbb{P}^3$. The linear system of hyperplane sections 
associated to the morphism $\sigma \colon {\rm Km}({\rm C}) \rightarrow  \mathbb{P}^3$ of diagram $(\ref{diagc1})$
is given by
$$ \vert 2\Delta_{\varnothing} + {\rm G}_{\varnothing} +  \sum_{2 \leq t \leq 6 } \ {\rm G}_{1t}   \vert
\ = \
\vert 2\Delta_{1j} + {\rm G}_{\varnothing} + \sum_{\stackrel{1 \leq t \leq 6 }{t \neq j}} \ {\rm G}_{1t}   \vert
\ = \
\vert 2\Delta_{ij} + {\rm G}_{1i} +{\rm G}_{1j}+{\rm G}_{ij} +  {\rm G}_{kl} +{\rm G}_{km}+{\rm G}_{lm}   \vert.
$$
Let ${\rm pr} \colon \mathbb{P}^3 \dashrightarrow \mathbb{P}^2 $ be the projection from the point ${\rm p}_{\varnothing}$. The 
images through this projection of the six planes associated with the tropes $ \Delta_{\varnothing}$, $\Delta_{1j}$, $ 2 \leq j \leq 6$ 
form a configuration of six distinct lines in $\mathbb{P}^2$:
\begin{equation}
\label{branchsix}
 \mathcal{L} = \{ \ {\rm L}_1 ,  \ {\rm L}_2, \ {\rm L}_3, \ \cdots  \ {\rm L}_6 \ \}. 
 \end{equation}
The six lines are tangent to a common smooth conic and meet at fifteen distinct points $ q_{ij} = {\rm pr}(p_{ij})$, $ 1 \leq i < j \leq 6$.
After blowing up the points $q_{ij}$, one obtains a rational surface ${\rm R}$ with fifteen exceptional curves ${\rm E}_{ij}$. 
Denote by ${\rm L}'_i$ with $ 1 \leq i \leq 6$, the rational 
curves on ${\rm R}$ obtained as proper transforms of the six lines ${\rm L}_i$. Then, one has a double cover morphism 
\begin{equation}
\label{nsympdouble}
 \pi \colon {\rm Km}({\rm C}) \rightarrow {\rm R} 
 \end{equation}
with branched locus given by the six disjoint curves ${\rm L}'_i$, $ 1 \leq i \leq 6$.      
\begin{equation}
\label{diaggcc11}
\xymatrix { 
%& & & & & \\
{\rm Jac}({\rm C}) \ar @{->} _{\varphi_{\vert 2 \Theta_{\varnothing} \vert}} [dr] \ar @{-->} [r] & 
{\rm Km}({\rm C}) %\ar @(ul,dr) %_{\beta} 
\ar @{->} ^{\sigma} [d] \ar @{->} ^{\pi} [r] & {\rm R} \ar @{->} ^{\rho} [d] \\
%{\rm Km}({\rm C}) \ar @{->} ^{\rho} [r] 
& \mathbb{P}^3 \ar @{-->} ^{{\rm pr}} [r] & \mathbb{P}^2  \\
}
\end{equation}
The deck transformation $ \beta \colon {\rm Km}({\rm C}) \rightarrow {\rm Km}({\rm C}) $ associated with the double cover $(\ref{nsympdouble})$ 
is a non-symplectic involution with fixed locus given by the union of six curves $ \Delta_{\varnothing}$, $ \Delta_{1j} $, $ 2 \leq j \leq 6 $. 
\subsection{Two Elliptic Fibrations on ${\rm Y}$}
\label{detailsfrompaper2}
There are two elliptic fibrations on the Kummer surface $ {\rm Y}$= ${\rm Km}({\rm C})$ that play an important role in our discussion. 
The first one is the elliptic fibration $\varphi_{{\rm Y}} \colon {\rm Y} \rightarrow \mathbb{P}^1 $ of $(\ref{inducedfromalt})$. The 
geometric features of this fibration are discussed in detail in Chapter 3 of \cite{clingher5}. Let us outline here the main properties. 
The elliptic pencil $ \varphi_{{\rm Y}}$ is associated with the line bundle: 
\begin{equation}
\label{divofi5star}
\mathcal{O}_{{\rm Y}} \left ( \  \Delta_{34} + \beta (\Delta_{34}) + 2 \left ( {\rm G}_{34}+ \Delta_{13} + {\rm G}_{23} + \Delta_{12}+{\rm G}_{12} + \Delta_{\varnothing} \right ) + {\rm G}_{15}+{\rm G}_{16} \ \right ) \ .
\end{equation} 
The fibration carries therefore a singular fiber of Kodaira type ${\rm I}_5^*$ 
\begin{equation}
\label{diagg2244}
\def\objectstyle{\scriptstyle}
\def\labelstyle{\scriptstyle}
\xymatrix @-0.9pc
{
& \stackrel{\Delta_{34}}{\bullet} \ar @{-} [dr] & & & &  & & & \stackrel{{\rm G}_{15}}{\bullet} \ar @{-} [dl]
 & % \stackrel{\Delta_{15}}{\bullet} \ar @{-} [l] 
 \\
%\stackrel{\Delta_{14}}{\bullet} \ar @{-} [rr]  
& & \stackrel{{\rm G}_{34}}{\bullet} \ar @{-} [r] \ar @{-} [dl] &
\stackrel{\Delta_{13}}{\bullet} \ar @{-} [r] &
\stackrel{{\rm G}_{23}}{\bullet} \ar @{-} [r] &
\stackrel{\Delta_{12}}{\bullet} \ar @{-} [r] &
\stackrel{{\rm G}_{12}}{\bullet} \ar @{-} [r] &
\stackrel{\Delta_{\varnothing}}{\bullet} \ar @{-} [dr] & \\
& \stackrel{\beta(\Delta_{34})}{\bullet} & & & &   & & & \stackrel{{\rm G}_{16}}{\bullet} & % \stackrel{\Delta_{16}}{\bullet} \ar @{-} [l]
\\
}
\end{equation}
In the generic situation, there are six additional singular fibers of type ${\rm I}_2$ and one of 
type ${\rm I}_1$. The tropes $ \Delta_{15} $ and $\Delta_{16}$ are disjoint sections in $\varphi_{{\rm Y}}$, whereas 
$\Delta_{14}$ is a bi-section.  
\begin{equation}
\label{diagg22}
\def\objectstyle{\scriptstyle}
\def\labelstyle{\scriptstyle}
\xymatrix @-0.9pc
{
& \stackrel{\Delta_{34}}{\bullet} \ar @{-} [dr] & & & &  & & & \stackrel{{\rm G}_{15}}{\bullet} \ar @{-} [dl]
 & \stackrel{\Delta_{15}}{\bullet} \ar @{-} [l] \\
\stackrel{\Delta_{14}}{\bullet} \ar @{-} [rr]  & & \stackrel{{\rm G}_{34}}{\bullet} \ar @{-} [r] \ar @{-} [dl] &
\stackrel{\Delta_{13}}{\bullet} \ar @{-} [r] &
\stackrel{{\rm G}_{23}}{\bullet} \ar @{-} [r] &
\stackrel{\Delta_{12}}{\bullet} \ar @{-} [r] &
\stackrel{{\rm G}_{12}}{\bullet} \ar @{-} [r] &
\stackrel{\Delta_{\varnothing}}{\bullet} \ar @{-} [dr] & \\
& \stackrel{\beta(\Delta_{34})}{\bullet} & & & &   & & & \stackrel{{\rm G}_{16}}{\bullet} & \stackrel{\Delta_{16}}{\bullet} \ar @{-} [l]\\
}
\end{equation}
As an element of the Mordell-Weil group ${\rm MW}(\varphi_{{\rm Y}}, \Delta_{15})$, the section $\Delta_{16}$ has order two. Hence, fiber-wise 
translations by $\Delta_{16}$ extend to define the Van Geemen-Sarti involution $\Phi_{{\rm Y}} \colon {\rm Y} \rightarrow {\rm Y} $ of $(\ref{dualvgs})$.  
\par A simple computation shows that, in the context of diagram $(\ref{diaggcc11})$, the ${\rm I}_5^*$ divisor in $(\ref{divofi5star})$ is the pull-back 
under the double cover $ \pi \colon {\rm Y} \rightarrow {\rm R}$ of:   
\begin{equation}
\label{ruling1}
5 \rho ^* ({\rm h}) - 3{\rm E}_{13}- 2 \left ( {\rm E}_{14} + {\rm E}_{25}+ {\rm E}_{26} \right ) - 
\left ( {\rm E}_{24}+ {\rm E}_{35}+ {\rm E}_{36}+{\rm E}_{56} \right ) 
\end{equation}
where is the hyperplane class in $ \mathbb{P}^2$. The fibers of $\varphi_{{\rm Y}}$ are therefore coming from pencil of 
projective quintic 
curves in $\mathbb{P}^2$, with a 
triple point at $q_{13}$, three double points at $q_{14}$, $q_{25}$, $q_{26}$ and also passing through the 
four points $q_{24}$, $q_{35}$, $q_{36}$, $q_{56}$. The divisor $ (\ref{ruling1})$ determines a ruling 
\begin{equation}
\label{ruling3}
\varphi_{{\rm R}} \colon {\rm R} \rightarrow \mathbb{P}^1. 
\end{equation}
The generic fiber of this ruling is a rational curve with four distinct special points: the intersection with $ {\rm L}'_5 $, ${\rm L}'_6$ 
(sections) and ${\rm L}'_4$ (bi-section). The associated elliptic fiber of $\varphi_{{\rm Y}}$ is the double cover of this rational curve 
branched at the four special points. The elliptic fibration $\varphi_{{\rm Y}}$ factors through the ruling $(\ref{ruling3})$.
$$ \varphi_{{\rm Y}} \colon {\rm Y} \ \stackrel{\pi}{\longrightarrow} \ {\rm R} \ \stackrel{\varphi_{{\rm R}}}{\longrightarrow} \ \mathbb{P}^1 $$
\par The second elliptic fibration on we consider on the K3 surface ${\rm Y}$ is associated, in a manner similar with the above 
description, with the pencil of conic curves in $\mathbb{P}^2$ passing through $q_{13}$, $q_{14}$, $q_{25}$, $q_{26}$. The line bundle: 
\begin{equation}
\label{ruling2}
\mathcal{O}_{{\rm R}} \left ( \ 2 \rho ^* ({\rm h}) - {\rm E}_{13}-  {\rm E}_{14} - {\rm E}_{25}  - {\rm E}_{26}  \ \right )  
\end{equation}   
determines a ruling 
\begin{equation}
\label{ruling4}
\psi_{{\rm R}} \colon {\rm R} \rightarrow \mathbb{P}^1 
\end{equation}
 whose pull-back through the double cover 
$\pi \colon {\rm Y} \rightarrow {\rm R}$ gives an elliptic fibration $ \psi_{{\rm Y}} \colon {\rm Y} \rightarrow \mathbb{P}^1$. The elliptic 
fibration $ \psi_{{\rm Y}}$ carries two special singular fibers of Kodaira types ${\rm I}_3$ and $ {\rm I}^*_2$.        
\begin{equation}
\label{diagg99}
\def\objectstyle{\scriptstyle}
\def\labelstyle{\scriptstyle}
\xymatrix @-0.9pc
{
& \stackrel{\Delta_{34}}{\bullet} \ar @{-} [dr] & & & & & {\rm G}_{23} \ar @{-} [dr] &    & & & \stackrel{{\rm G}_{15}}{\bullet} \ar @{-} [dl]
 & 
 %\stackrel{\Delta_{15}}{\bullet} \ar @{-} [l] 
  \\
\stackrel{{\rm G}_{56}}{\bullet} \ar @{-} [ur] \ar @{-} [dr] & & \stackrel{{\rm G}_{34}}{\bullet} %\ar @{-} [r] 
\ar @{-} [dl] & & &
%\stackrel{\Delta_{13}}{\bullet} \ar @{-} [r] 
&
%\stackrel{{\rm G}_{23}}{\bullet} \ar @{-} [r] 
&
\stackrel{\Delta_{12}}{\bullet} \ar @{-} [r] &
\stackrel{{\rm G}_{12}}{\bullet} \ar @{-} [r] &
\stackrel{\Delta_{\varnothing}}{\bullet} \ar @{-} [dr] & \\
& \stackrel{\beta(\Delta_{34})}{\bullet} & & & &   &  {\rm G}_{24} \ar @{-} [ur] & & & & \stackrel{{\rm G}_{16}}{\bullet} & 
% \stackrel{\Delta_{16}}{\bullet} \ar @{-} [l] 
\\
}
\end{equation}
In the generic situation, $\psi_{{\rm Y}}$ has six additional fibers of type ${\rm I}_2$.
\par In the next section, we shall use the two elliptic fibrations $ \varphi_{{\rm Y}} $ and $\psi_{{\rm Y}}$ in the context of 
the following property.  
\begin{prop}
The product morphism $ \varphi_{{\rm Y}} \times \psi_{{\rm Y}} $ factors through the double cover map:
$$ \varphi_{{\rm Y}} \times \psi_{{\rm Y}} \colon {\rm Y} \ \stackrel{\pi}{\longrightarrow} \ {\rm R} \  
\stackrel{\varphi_{{\rm R}} \times \psi_{{\rm R}}}{\longrightarrow} \  \mathbb{P}^1 \times \mathbb{P}^1 \ .$$
Moreover  $ \varphi_{{\rm R}} \times \psi_{{\rm R}} \colon {\rm R} \rightarrow \mathbb{P}^1 \times \mathbb{P}^1 $ is 
a birational morphism.
\end{prop}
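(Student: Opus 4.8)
The plan is to reduce both assertions to a one-line intersection computation on the rational surface ${\rm R}$, everything else being already in place. For the factorization statement I would observe that both elliptic fibrations on ${\rm Y}$ were defined through the double cover $\pi$ of $(\ref{nsympdouble})$: by construction $\varphi_{{\rm Y}}$ is the composite ${\rm Y}\stackrel{\pi}{\longrightarrow}{\rm R}\stackrel{\varphi_{{\rm R}}}{\longrightarrow}\mathbb{P}^1$, and $\psi_{{\rm Y}}$ is the pull-back under $\pi$ of the ruling $\psi_{{\rm R}}$, hence $\psi_{{\rm Y}}=\psi_{{\rm R}}\circ\pi$. Consequently $\varphi_{{\rm Y}}\times\psi_{{\rm Y}}=(\varphi_{{\rm R}}\times\psi_{{\rm R}})\circ\pi$, which is the claimed factorization, and $f:=\varphi_{{\rm R}}\times\psi_{{\rm R}}$ is a morphism because each of $\varphi_{{\rm R}}$, $\psi_{{\rm R}}$ is. So the real content is to show $f\colon{\rm R}\to\mathbb{P}^1\times\mathbb{P}^1$ is birational.

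For that I would compute in ${\rm NS}({\rm R})$. Since ${\rm R}$ is the blow-up of $\mathbb{P}^2$ at the fifteen distinct points $q_{ij}$ of diagram $(\ref{diaggcc11})$, the classes $\rho^*{\rm h}$ and the ${\rm E}_{ij}$ form an orthogonal basis with $(\rho^*{\rm h})^2=1$, $(\rho^*{\rm h})\cdot{\rm E}_{ij}=0$ and ${\rm E}_{ij}\cdot{\rm E}_{kl}=-1$ if $\{i,j\}=\{k,l\}$ and $0$ otherwise. The fibre classes of the two rulings are read off from $(\ref{ruling1})$ and $(\ref{ruling2})$:
$$ F_{\varphi} \ = \ 5\rho^*{\rm h} - 3{\rm E}_{13} - 2({\rm E}_{14}+{\rm E}_{25}+{\rm E}_{26}) - ({\rm E}_{24}+{\rm E}_{35}+{\rm E}_{36}+{\rm E}_{56}), $$
$$ F_{\psi} \ = \ 2\rho^*{\rm h} - {\rm E}_{13}-{\rm E}_{14}-{\rm E}_{25}-{\rm E}_{26}, $$
and since ${\rm E}_{24},{\rm E}_{35},{\rm E}_{36},{\rm E}_{56}$ do not appear in $F_{\psi}$ one gets $F_{\varphi}\cdot F_{\psi}=5\cdot 2-3-2-2-2=1$; one also checks $F_{\varphi}^2=F_{\psi}^2=0$, as must be the case for fibres of genuine fibrations over $\mathbb{P}^1$.

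To conclude I would restrict $\psi_{{\rm R}}$ to a general fibre $F$ of $\varphi_{{\rm R}}$. Since $\varphi_{{\rm R}}$ is a ruling, $F\cong\mathbb{P}^1$; and $\psi_{{\rm R}}|_F\colon F\to\mathbb{P}^1$ is a morphism of degree $F\cdot F_{\psi}=F_{\varphi}\cdot F_{\psi}=1$ (it is non-constant, as $F\cdot F_{\psi}=1\neq 0$), hence an isomorphism. Thus for general $s\in\mathbb{P}^1$ the map $f$ carries $\varphi_{{\rm R}}^{-1}(s)$ isomorphically onto $\{s\}\times\mathbb{P}^1$, and because $\varphi_{{\rm R}}$ is the composite of $f$ with the first projection, different fibres land in different slices; so $f$ is injective over a dense open subset of $\mathbb{P}^1\times\mathbb{P}^1$. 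Being proper and dominant, $f$ is surjective; being generically one-to-one onto the normal variety $\mathbb{P}^1\times\mathbb{P}^1$, it is birational. I expect the only place needing care --- the ``main obstacle'' such as it is --- to be the N\'eron--Severi bookkeeping: correctly extracting $F_{\varphi}$, $F_{\psi}$ from the pencils of quintics and conics described by $(\ref{ruling1})$, $(\ref{ruling2})$, and checking that the general fibre of $\varphi_{{\rm R}}$ really is an irreducible reduced $\mathbb{P}^1$ (so that $F\cdot F_{\psi}$ counts honest intersection points), after which the argument is immediate.
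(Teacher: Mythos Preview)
Your argument is correct. Note that the paper itself does not prove this proposition: Section \ref{twoisogeny} is explicitly an outline, with the proofs of its statements deferred to the companion paper \cite{clingher5}. Your proof is therefore a self-contained substitute rather than a comparison target. The factorization step is indeed immediate from the definitions, since both $\varphi_{{\rm Y}}$ and $\psi_{{\rm Y}}$ were introduced as $\pi$-pullbacks of the rulings $\varphi_{{\rm R}}$, $\psi_{{\rm R}}$. For birationality, your intersection computation $F_{\varphi}\cdot F_{\psi}=10-3-2-2-2=1$ is correct, and the conclusion via restricting $\psi_{{\rm R}}$ to a general (smooth rational) fibre of $\varphi_{{\rm R}}$ is the standard and cleanest way to finish. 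The only caveat you flag yourself---that the general fibre of $\varphi_{{\rm R}}$ is an integral $\mathbb{P}^1$---is automatic here: $\varphi_{{\rm R}}$ is a non-constant morphism from a smooth surface to $\mathbb{P}^1$ with connected fibres (the linear system $(\ref{ruling1})$ is a pencil, and connectedness can be read off from $F_{\varphi}\cdot F_{\psi}=1$, or alternatively from the explicit quintic pencil in Section \ref{exquint}), so Bertini gives smoothness of the general fibre, and $F_{\varphi}^2=0$ together with the genus formula on the rational surface ${\rm R}$ forces it to be $\mathbb{P}^1$.
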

\section{An Explicit Computation: Proof of Theorem \ref{maincomp}}
\label{explicitcomp}
We shall prove the identity in Theorem \ref{maincomp} by explicitly describing the details of the geometric two-isogeny transformation outlined 
in Section \ref{twoisogeny}. We give explicit formulas for the elliptic fibration $\varphi_{{\rm Y}}$ on the 
Kummer surface ${\rm Y} = {\rm Km}({\rm C})$ from the points of view of the two contexts involved: the appearance of $\varphi_{{\rm Y}}$ from 
the four-parameter ${\rm N}$-polarized K3 family 
${\rm X}(\alpha, \beta, \gamma, \delta) $ 
(with $ \gamma \neq 0 $ ) via the Nikulin construction and 
the set-up of $\varphi_{{\rm Y}}$ in the context of the Kummer construction as described in Section $\ref{detailsfrompaper2}$. 
The first description will depend on the quadruple parameter $ (\alpha, \beta, \gamma, \delta) $, while in the latter context, 
we give a formula for $\varphi_{{\rm Y}}$ in terms of Siegel modular forms. Identity $(\ref{mainrelation})$ will follow from the 
matching of the explicit formulas on the two sides.  

\subsection{The Fibration $\varphi_{{\rm Y}}$ via the Nikulin Construction}
Recall from Section $\ref{specialfeat}$ that, in the context of the K3 surface ${\rm X}(\alpha, \beta, \gamma, \delta) $, 
the alternate fibration $ \varphi^{{\rm a}}_{{\rm X}} $ can be described by the affine equation: 
\begin{equation}
\label{firstrightsideww2}
 y_1^2 \ = \ z_1^3 + \mathcal{P}_{{\rm X}}(\mu) \cdot z_1^2 + \mathcal{Q}_{{\rm X}}(\mu) \cdot z_1, 
 \end{equation}
where 
$$  
\mathcal{P}_{{\rm X}}(\mu)=4 \mu ^3  - 3 \alpha \mu - \beta  , \ \  
\mathcal{Q}_{{\rm X}}(\mu) = \frac{1}{2} \left ( \frac{1}{2} \delta - \gamma \mu \right ) \ . $$ 
The Van Geemen-Sarti involution $\Phi_{{\rm X}}$ is described by Proposition $\ref{propcuinv}$ and, in the context of the affine coordinates 
$(z_1,y_1)$ of $(\ref{firstrightsideww2})$, acts as:
$$ 
\left ( z_1, \ y_1 \right ) \ \mapsto \ 
\left (
\frac{\mathcal{Q}_{{\rm X}}(\mu)}{z_1} , \ -\frac{\mathcal{Q}_{{\rm X}}(\mu) \cdot y_1 }{z_1^2} 
\right ). $$
Then, as explained, for instance, by Van Geemen and Sarti in Section 4 of \cite{sarti1}, one can write an affine form for the elliptic fibration $\varphi_{{\rm Y}}$ 
as follows: 
\begin{equation}
\label{secondrightside}
 y_2^2 \ = \ z_2^3  + \mathcal{P}_{{\rm Y}}(\mu) \cdot z_2^2 + \mathcal{Q}_{{\rm Y}}(\mu)  \cdot z_2, 
 \end{equation}
 where the affine coordinates $(z_2,y_2)$ are:
 $$
 z_2 \ = \ \frac{y_1^2}{z_1^2} , \ \ \ y_2 \ = \ \frac{\left ( \mathcal{Q}_{{\rm X}}(\mu) -  z_1^2  \right )y_1 }{z_1^2}
 $$
 and
\begin{equation}
 \mathcal{P}_{{\rm Y}}(\mu) \ = \ - 2 \mathcal{P}_{{\rm X}}(\mu) \ = \ - 8 \mu^3 + 6 \alpha \mu + 2 \beta   
 \end{equation}
\begin{equation}
\mathcal{Q}_{{\rm Y}}(\mu) \ = \ \mathcal{P}_{{\rm X}}^2(\mu) - 4 \mathcal{Q}_{{\rm X}}(\mu) \ = \ 
 16 \mu^6 - 24 \alpha \mu^4 - 8 \beta \mu^3 + 9 \alpha^2 \mu^2 + 2 (3 \alpha \beta +  \gamma) \mu + \beta^2 - \delta \ .
 \end{equation}
\subsection{The Fibration $\varphi_{{\rm Y}}$ via the Kummer Construction}
\label{therealkummerside}
The maps of diagram $(\ref{diaggcc11})$ can be described explicitly in terms of genus-two theta functions. Let 
$ \kappa \in \mathbb{H}_2 $ be a point of the Siegel upper half-space defined in $(\ref{upperhalfspace})$. Furthermore, assume that $\kappa$ 
is associated with a set of periods for the polarized Hodge structure of ${\rm Jac}({\rm C})$. By classical results (see \cite{mumford1, mumford2}), 
there are then sixteen theta functions 
$$ \theta_m(\kappa, \cdot) \colon \mathbb{C}^2 \rightarrow \mathbb{C}, $$ 
with characteristics $m=(u,v)$, $u,v \in \{ 0, 1/2 \} \times \{ 0, 1/2 \} $. The theta functions $\theta_m(\kappa, \cdot)$ descend 
to sections in line bundles over the Jacobian surface ${\rm Jac}({\rm C})$ determining the sixteen Theta divisors\footnote{
One can arrange that $ \theta_{m}(\kappa, \cdot) \in {\rm H}^0({\rm Jac}({\rm C}), \Theta_{\varnothing}) $ for $m=((0,0),(0,0))$ and 
for the level-two structure induced by characteristics to match $(\ref{level2})$. 
} $\Theta_{\varnothing}$, $ \Theta_{ij}$. 
\par Among the possible sixteen characteristics $m=(u,v)$, ten are even and six are odd. The ten even theta functions are related 
by six independent Riemann theta relations. Our computation will be based on the following four {\it fundamental theta functions}
\begin{equation}
\label{fundamtheta}
\theta_{m_1}(\kappa, \cdot), \ \ \theta_{m_2}(\kappa, \cdot), \ \ \theta_{m_3}(\kappa, \cdot), \ \ \theta_{m_4}(\kappa, \cdot)
\end{equation}
with:
$$ m_1= \left ( (0,0),(0,0) \right ), \ \  m_2= \left ( (0,0),(1/2,1/2) \right ) $$
$$  m_3= \left ( (0,0),(1/2,0) \right ), \ \ m_4= \left ( (0,0),(0,1/2) \right ). $$
In this context, one can describe the morphism $\varphi_{\vert 2 \Theta_{\varnothing} \vert}$ of diagram $(\ref{diaggcc11})$ as:
\begin{equation}
\xymatrix { 
\mathbb{C}^2 \ar @{->} [d] \ar @{->} ^{\Xi} [drr] & & \\
{\rm Jac}({\rm C}) \ar @{->} _{\varphi_{\vert 2 \Theta_{\varnothing} \vert}} [rr] & & \mathbb{P}^3 \\ 
}
\end{equation}
where $ \Xi \colon \mathbb{C}^2 \rightarrow \mathbb{P}^3 $ is defined as
$$\Xi(Z) \ = \ \left [ \   
\theta_{m_1}(\kappa, 2{\rm Z} ), \ \theta_{m_1}(\kappa, 2{\rm Z} ), \ \theta_{m_3}(\kappa, 2{\rm Z} ), \ \theta_{m_4}(\kappa, 2{\rm Z} )
\ \right ] . 
$$
Via Frobenius identities, one obtains then an explicit description for the quartic surface:
$$ {\rm S}_{{\rm C}}  =  \varphi_{\vert 2 \Theta_{\varnothing} \vert} \left ( {\rm Jac}({\rm C}) \right ) 
\ \subset \ \mathbb{P}^3(x,y,z,w). $$
This is the classical equation of Hudson \cite{hudson, gonzalez}:
\begin{equation}
\label{hudson}
 x^4+y^4+z^4+w^4 +2\Dd xyzw + \Aa(x^2w^2+y^2z^2)+ \Bb (y^2w^2+x^2z^2) + \Cc (x^2y^2+ z^2w^2) \ = \ 0. 
\end{equation}
The coefficients $\Aa$, $\Bb$, $\Cc$, $\Dd$ of the Hudson quartic are rational functions in the four {\it fundamental theta constants} :
$$ a = \theta_{m_1}(\kappa, 0), \ \ \   b = \theta_{m_2}(\kappa, 0), \ \ \ c = \theta_{m_3}(\kappa, 0), \ \ \ d = \theta_{m_4}(\kappa, 0), $$
and appear as follows:
\begin{equation}
\label{Hudsoncoef}
 \Aa \ = \ \frac{b^4+c^4-a^4-d^4}{a^2d^2-b^2c^2}, \ \ \ \Bb \ = \ \frac{c^4+a^4-b^4-d^4}{b^2d^2-c^2a^2},
\ \ \ \Cc \ = \ \frac{a^4+b^4-c^4-d^4}{c^2d^2-a^2b^2}, \  
\end{equation}
$$ \Dd \ = \ \frac{
abcd(d^2+a^2-b^2-c^2)(d^2+b^2-c^2-a^2)(d^2+c^2-a^2-b^2)(a^2+b^2+c^2+d^2)
}{
(a^2d^2-b^2c^2)(b^2d^2-c^2a^2)(c^2d^2-a^2b^2)
}.
$$
Note that, as function of $ \kappa \in \mathbb{H}_2$, the homogeneous polynomial 
\begin{equation}
\label{discr}
(ad-bc)(ad+bc)(ac-bd)(ac+bd)(ab-cd)(ab+cd)
(a^2+d^2- b^2-c^2)(a^2+c^2- b^2-d^2)
\end{equation}
$$
(a^2+b^2-c^2-d^2)(a^2+b^2+c^2+d^2) $$
represents $\mathcal{C}_{10}$ scaled by a non-zero constant. The zero-divisor of $\mathcal{C}_{10}$ is the Humbert surface 
$\mathcal{H}_1$, and hence the denominators in $(\ref{Hudsoncoef})$ are all non-zero.
\par In the Hudson quartic setting, the sixteen singularities $ p_{\varnothing}$, $p_{ij}$ of $ {\rm S}_{{\rm C}} $ 
are as follows: 
\begin{itemize}
\item $ %\Pp_1 \ = \ 
p_{\varnothing} \ =  \ \ [a,b,c,d]$
\item $ %\Pp_9 : 
p_{12} \ = \ \ [c,d,a,b]$
\item $ % \Pp_{11} : 
p_{13} \ =  \ \ [a,-b,-c,d]$
\item $%\Pp_6 \ = \ 
p_{14} \ =  \ \ [-b,a,d,-c]$
\item $ %\Pp_4 \ = \ 
p_{15} \ =  \ \ [c,d,-a,-b]$
\item $ %\Pp_{16} : 
p_{16} \ = \ \ [-b,-a,d,c]$
\item $ %\Pp_5 \ = \ 
p_{23} \ = \ \ [-c,d,a,-b]$
\item $ %\Pp_{14} : 
p_{24} \ = \ \ [d,-c,-b,a]$
\item $ %\Pp_{13} : 
p_{25} \ = \ \ [-a,-b,c,d]$
\item $ %\Pp_3 \ = \ 
p_{26} \ = \ \  [d,c,-b,-a]$
\item $ %\Pp_{10} : 
p_{34} \ = \ \ [b,a,d,c]$
\item $ %\Pp_{15} : 
p_{35} \ = \ \ [-c,d,-a,b]$
\item $ %\Pp_7 \ = \ 
p_{36} \ =  \ \ [b,-a,d,-c]$
\item $ %\Pp_2 \ = \ 
p_{45} \ =   \ \ [d,-c,b,-a]$
\item $ %\Pp_{12} : 
p_{46} \ = \ \ [-a,b,-c,d]$
\item $ %\Pp_8 : 
p_{56} \ = \ \ [d,c,b,a]$
\end{itemize}
The sixteen tropes $ \Delta_{\varnothing}$, $\Delta_{ij}$ correspond to the following sixteen hyperplanes:
\begin{itemize}
\item $ %\tth_2 
\Delta_{\varnothing} : \ \ \ dx - cy + bz- aw= 0, %\ \ \ {\rm contains} \ \ \Pp_1, \ \Pp_4, \ \Pp_6, \  \Pp_9, \ \Pp_{11}, \ \Pp_{16} 
$
\item $ %\tth_7 
\Delta_{12} : \ \ \ bx - ay + dz- cw= 0, %\ \ \ {\rm contains} \ \ \Pp_1, \ \Pp_3, \ \Pp_5, \ \Pp_{9}, \ \Pp_{13}, \ \Pp_{14} 
$
\item $ %\tth_3 
\Delta_{13}: \ \ \ dx + cy - bz+ aw= 0, %\ \ \ {\rm contains} \ \ \Pp_1, \ \Pp_5, \ \Pp_7, \ \Pp_{10}, \ \Pp_{11}, \ \Pp_{15} 
$
\item $ %\tth_4 
\Delta_{14} : \ \ \ cx + dy - az- bw= 0, %\ \ \ {\rm contains} \ \ \Pp_1, \ \Pp_2, \ \Pp_6, \ \Pp_{10}, \ \Pp_{12}, \ \Pp_{14} 
$
\item $ %\tth_6 
\Delta_{15} : \ \ \ -bx + ay + dz- cw= 0, %\ \ \ {\rm contains} \ \ \Pp_1, \ \Pp_2, \ \Pp_4, \ \Pp_{8}, \ \Pp_{13}, \ \Pp_{15} 
$
\item $ %\tth_5 
\Delta_{16} : \ \ \ -cx + dy + az- bw= 0, %\ \ \ {\rm contains} \ \ \Pp_1, \ \Pp_3, \ \Pp_7, \ \Pp_{8}, \ \Pp_{12}, \ \Pp_{16} 
$
\item $ %\tth_{16} 
\Delta_{23} : \ \ \ -bx - ay + dz+ cw= 0, %\ \ \ {\rm contains} \ \ \Pp_2, \ \Pp_5, \ \Pp_8, \ \Pp_{9}, \ \Pp_{11}, \ \Pp_{12} 
$
\item $ % \tth_{13} 
\Delta_{24} : \ \ \ -ax -b y + cz +dw= 0, %\ \ \ {\rm contains} \ \ \Pp_6, \ \Pp_7, \ \Pp_8, \ \Pp_{9}, \ \Pp_{14}, \ \Pp_{15} 
$
\item $ %\tth_{14} 
\Delta_{25} : \ \ \ dx - cy -bz+aw= 0, %\ \ \ {\rm contains} \ \ \Pp_4, \ \Pp_7, \ \Pp_9, \ \Pp_{10}, \ \Pp_{12}, \ \Pp_{13} 
$
\item $ %\tth_{11} 
\Delta_{26} : \ \ \ ax - by -c z+ dw= 0, %\ \ \ {\rm contains} \ \ \Pp_2, \ \Pp_3, \ \Pp_9, \ \Pp_{10}, \ \Pp_{15}, \ \Pp_{16} 
$
\item $ %\tth_{15} 
\Delta_{34} : \ \ \ -cx + dy -az+bw= 0, %\ \ \ {\rm contains} \ \ \Pp_3, \ \Pp_6, \ \Pp_8, \ \Pp_{10}, \ \Pp_{11}, \ \Pp_{13} 
$
\item $ %\tth_{10} 
\Delta_{35} : \ \ \ bx + ay + dz+ cw= 0, %\ \ \ {\rm contains} \ \ \Pp_3, \ \Pp_4, \ \Pp_{11}, \ \Pp_{12}, \ \Pp_{14}, \ \Pp_{15} 
$
\item $ %\tth_9 
\Delta_{36} : \ \ \ cx + dy + az + bw= 0, %\ \ \ {\rm contains} \ \ \Pp_2, \ \Pp_7, \ \Pp_{11}, \ \Pp_{13}, \ \Pp_{14}, \ \Pp_{16} 
$
\item $ %\tth_1 
\Delta_{45} : \ \ \ ax + by + cz+ dw= 0, %\ \ \ {\rm contains} \ \ \Pp_2, \ \Pp_3, \ \Pp_4, \ \Pp_5, \ \Pp_6, \ \Pp_7 
$
\item $ %\tth_8 
\Delta_{46}: \ \ \ dx + cy + bz + aw= 0, %\ \ \ {\rm contains} \ \ \Pp_5, \ \Pp_6, \ \Pp_{12}, \ \Pp_{13}, \ \Pp_{15}, \ \Pp_{16} 
$
\item $ %\tth_{12} 
\Delta_{56} : \ \ \ -ax + by - cz+dw= 0, %\ \ \ {\rm contains} \ \ \Pp_4, \ \Pp_5, \ \Pp_8, \ \Pp_{10}, \ \Pp_{14}, \ \Pp_{16} 
$
\end{itemize}
The rational projection ${\rm pr} \colon \mathbb{P}^3 \dashrightarrow \mathbb{P}^2 $ of diagram $(\ref{diaggcc11})$ has then the explicit form:
$$ {\rm pr} \left ( \ [ x, y, z, w ] \right )  \ = \ \left [ \ 
-bx + ay - dz + cw, \  cx - dy - az + bw, \ dx + cy - bz - aw \ 
  \right ] \ .$$
By a slight abuse of notation, we shall use homogeneous coordinates $[x,y,z]$ on the target space of the projection. In these coordinates, 
the six lines $ {\rm L}_n $, with $ 1 \leq n  \leq 6 $ forming the branch locus 
$(\ref{branchsix})$ can be described through the equations ${\rm L}_n(x,y,z) = 0$ where :
\begin{itemize}
\item $ {\rm L}_1(x,y,z) \ = \ 2 (a c + b d) x + 
   2 (a b - c d) y - (a^2 - b^2 - c^2 + d^2) z $
\item $ {\rm L}_2(x,y,z) \ = \ x $
\item $ {\rm L}_3(x,y,z) \ = \ z $
\item $ {\rm L}_4(x,y,z) \ = \ 2 (a d - bc)x + (a^2 - b^2 + c^2 - d^2)y + 2 (a b + c d)z $
\item $ {\rm L}_5(x,y,z) \ = \ (-a^2 - b^2 + c^2 + d^2)x + 2 (a d + b c)y - 2 (a c - b d)z $
\item $ {\rm L}_6(x,y,z) \ = \  y $
\end{itemize}   
The fifteen intersection points $q_{ij}$ of the six-line configuration are:  
\begin{itemize} 
\item $ %p9 = 
q_{12} \ = \ %proj[c, d, a, b]
[ \ 0, \ -a^2 + b^2 + c^2 - d^2, \ -2 a b + 2 c d \ ] $
\item $ %p11 = 
q_{13} \ = \ %proj[a, -b, -c, d]
[ \ -2 a b + 2 c d, \ 2 a c + 2 b d, \ 0 \ ] $
\item $ %p6 = 
q_{14} \ = %proj[-b, a, d, -c]
[ \ a^2 + b^2 - c^2 - d^2, \ -2 b c - 2 a d, \ 2 a c - 2 b d \ ] $
\item $ %p4 = 
q_{15} \ = \ %proj[c, d, -a, -b]
[ \ -2 b c + 2 a d, \ a^2 - b^2 + c^2 - d^2, \ 2 a b + 2 c d \ ] $
\item $ %p16 = 
q_{16} \ = \ %proj[-b, -a, d, c]
[ \ -a^2 + b^2 + c^2 - d^2, \ 0, \ -2 a c - 2 b d \ ] $
\item $ %p5 = 
q_{23} \ = \ %proj[-c, d, a, -b]
[ \ 0, \ -a^2 - b^2 - c^2 - d^2, \ 0 \ ] $
\item $ %p14 = 
q_{24} \ = \ %proj[d, -c, -b, a]
[ \ 0, \ 2 a b + 2 c d, \ -a^2 + b^2 - c^2 + d^2 \ ] $
\item $ %p13 = 
q_{25} \ = \ % proj[-a, -b, c, d]
[ \ 0, \ -2 a c + 2 b d, \ -2 b c - 2 a d \ ] $
\item $ %p3 = 
q_{26} \ = % proj[d, c, -b, -a]
[ \ 0, \ 0, \ a^2 + b^2 + c^2 + d^2 \ ] $
\item $ %p10 = 
q_{34} \ = \ %proj[b, a, d, c]
[ \ a^2 - b^2 + c^2 - d^2, \ 2 b c - 2 a d, \ 0 \ ] $
\item $ %p15 = 
q_{35} \ = %proj[-c, d, -a, b]
[ \ 2 b c + 2 a d, \ a^2 + b^2 - c^2 - d^2, \ 0 \ ] $
\item $ %p7 = 
q_{36} \ = %proj[b, -a, d, -c]
[ \ -a^2 - b^2 - c^2 - d^2, \ 0, \ 0 \ ] $
\item $ %p2 = 
q_{45} \ = \ %proj[d, -c, b, -a]
[ \ -2 a c - 2 b d, \ -2 a b + 2 c d, \ a^2 - b^2 - c^2 + d^2 \ ] $
\item $ %p12 = 
q_{46} \ = \ %proj[-a, b, -c, d]
[ \ 2 a b + 2 c d, \ 0, \ 2 b c - 2 a d \ ] $
\item $ %p8 = 
q_{56} = \ %proj[d, c, b, a]
[ \ 2 a c - 2 b d, \ 0, \ -a^2 - b^2 + c^2 + d^2 \ ] $
\end{itemize}
\subsection{The Quintic Pencil $\varphi_{{\rm R}}$}
\label{exquint}
As explained in Section $\ref{detailsfrompaper2}$, in order to describe explicitly the elliptic fibration 
$ \varphi_{{\rm Y}} \colon {\rm Y} \rightarrow \mathbb{P}^1$ of $(\ref{inducedfromalt})$, one needs to understand 
the ruling $ \varphi_{{\rm R}} \colon {\rm R} \rightarrow \mathbb{P}^1$ of $(\ref{ruling3})$. This ruling is associated 
with the pencil of quintic curves in $\mathbb{P}^2$, with a 
triple point at $q_{13}$, three double points at $q_{14}$, $q_{25}$, $q_{26}$ and passing through the 
four points $q_{24}$, $q_{35}$, $q_{36}$, $q_{56}$. 
\par This pencil can be described explicitly. Note that a first such quintic curve is given by:
\begin{equation}
\label{squinticdiv}
 {\rm L}_1 + {\rm L}_2+{\rm L}_3+{\rm C} 
\end{equation}
where ${\rm C}$ is the unique conic passing through $ q_{13}$, $q_{14}$, $q_{25}$, $q_{26}$, $q_{56}$.
The pull-back of the divisor $(\ref{squinticdiv})$ determines the ${\rm I}_5^*$ fiber of the elliptic 
fibration $ \varphi_{{\rm Y}}$, as described 
in $(\ref{diagg2244})$. The conic ${\rm C}$ is given by the following polynomial:
\begin{equation}
\label{conicc}
{\rm C}(x,y,z) \ = \ c_{200}x^2 + c_{020} y^2 + c_{002} z^2 + c_{110}xy + c_{101} xz + c_{011} yz 
\end{equation}
with coefficients set as follows:
\begin{itemize}
\item [] $ c_{200} \ = \ -2 (a d - b c) (b c + a d) (a c + b d) (a^2 + b^2 - c^2 - d^2) $
\item [] $ c_{020} \ = \ - (b c + a d) (a b - c d) (a^2 + b^2 - c^2 - d^2) (a^2 - b^2 + c^2 - d^2) $
\item [] $ c_{002} \ = \ 0 $
\item [] $ c_{110} \ = \ -(b c + a d) (a^2 + b^2 - c^2 - d^2) (a^3 c - 3 a b^2 c + a c^3 + 3 a^2 b d - b^3 d + 3 b c^2 d - 3 a c d^2 - b d^3) $
\item [] $ c_{101} \ = \ - 4 (a d - b c ) (b c + a d) (a c - b d) (a c + b d) $
\item [] $ c_{011} \ = \ (a c - b d) (a b - c d) (a^2 + b^2 - c^2 - d^2) (a^2 - b^2 + c^2 - d^2) \ . $
\end{itemize}
We have therefore a description for the divisor $(\ref{squinticdiv})$ as the zero-locus a special quintic:
\begin{equation}
\label{quintic1}
{\rm QIN}_1(x,y,z) \ = \ {\rm L}_1(x,y,z) \cdot {\rm L}_2(x,y,z) \cdot {\rm L}_3(x,y,z) \cdot {\rm C}(x,y,z). 
\end{equation}
In order to select a second quintic polynomial with the required properties, we choose to impose the extra condition that the quintic curve 
passes through $q_{45}$. In the generic situation, the pull-back of the strict transform of this quintic curve determines a singular fiber 
of Kodaira type ${\rm I}_2$ on the elliptic fibration $ \varphi_{{\rm Y}}$. A polynomial describing this curve can be given as follows:  
\begin{align*}
{\rm QIN}_2(x,y,z) \ =& \ k_{500} x^5 + k_{050} y^5 + k_{005} z^5 + k_{410} x^4 y + k_{401} x^4 z + k_{140} x y^4 + 
k_{041} y^4 z + k_{104} x z^4 + k_{014} y z^4 + \\
&
+ k_{320} x^3 y^2 + k_{302} x^3 z^2 + k_{230} x^2 y^3 + 
k_{032} y^3 z^2 + k_{203} x^2 z^3 + k_{023} y^2 z^3 + k_{311} x^3 y z + k_{131} x y^3 z + \\
&+k_{113} x y z^3 +  k_{122} x y^2 z^2 + k_{212} x^2 y z^2 + k_{221} x^2 y^2 z \ .
\end{align*}
The coefficients $k_{ijk}$ are homogeneous degree-sixteen polynomials the fundamental theta constants $a,b,c,d$. 
Their precise form is given in Appendix $\ref{coefquint2}$.
\par The full pencil of quintic curves can be then described by:
\begin{equation}
\label{fullquint}
 {\rm QIN}_{t_1,t_2} (x,y,z) \ = \ t_1  \cdot {\rm QIN}_1 (x,y,z) + t_2 \cdot {\rm QIN}_2 (x,y,z), \ \ \ (t_1,t_2) \in \mathbb{C}^2 \ . 
 \end{equation}
\subsection{The Conic Pencil $\psi_{{\rm R}}$}
\label{exconic}
As explained earlier, the quintic pencil in Section $\ref{exquint}$ determines a ruling $ \varphi_{{\rm R}} $ on the rational 
surface ${\rm R}$ obtained by blowing up the fifteen points $q_{ij}$ on $\mathbb{P}^2$. The proper transforms $ {\rm L}'_5$, ${\rm L}'_6$ are 
sections in this ruling, while  ${\rm L}'_4$ is a bi-section. On each smooth fiber of $ \varphi_{{\rm R}} $, these sections/bi-section 
determine four distinct points and the associated elliptic fiber of $ \varphi_{{\rm Y}}$ is the double cover of the rational curve branched 
at these four special points. 
\par Our strategy shall be to describe explicitly the location of the four branch points via a parametrization of the ruling. In order to 
accomplish this task, we shall use the second ruling $\psi_{{\rm R}} \colon {\rm R} \rightarrow \mathbb{P}^1 $, the ruling associated to 
the pencil of projective conics passing through 
the four points  $q_{13}$, $q_{14}$, $q_{25}$, $q_{26}$.  This pencil can be written explicitly as:
\begin{equation}
{\rm C}_{s_1,s_2}(x,y,z) \ = \ s_1 \cdot {\rm C}(x,y,z) + s_2 \cdot {\rm L}_1(x,y,z) \cdot {\rm L}_2(x,y,z) , \ \ \ (s_1,s_2) \in \mathbb{C}^2 \ \ .  
\end{equation} 
As explained in Section $\ref{detailsfrompaper2}$, the intersection between generic 
fibers of the rulings $\varphi_{{\rm R}}$ and $\psi_{{\rm R}}$,  respectively, consist of exactly one point and one obtains a birational morphism 
$ \varphi_{{\rm R}} \times \psi_{{\rm R}} \colon {\rm R} \rightarrow  \mathbb{P}^1 \times \mathbb{P}^1 $.   
\begin{equation}
\xymatrix { 
{\rm R} \ar @{->} _{\rho} [d] \ar @{->} ^{\varphi_{{\rm R}} \times \psi_{{\rm R}}} [drr] & & \\
\mathbb{P}^2 \ar @{-->}  [rr] & & \mathbb{P}^1 \times \mathbb{P}^1 \\ 
}
\end{equation}
\subsection{Explicit Description of the Elliptic Fibration $\varphi_{{\rm Y}}$}
Let $ t \in \mathbb{C}$. Consider then the quintic curve:
\begin{equation}
\label{quintt1}
{\rm QIN}_{t, 1}(x,y,z) = t  \cdot {\rm QIN}_1 (x,y,z) +   {\rm QIN}_2 (x,y,z)\ = \ 0. 
\end{equation}
From the point of view of this work, one has four important points on the curve $(\ref{quintt1})$. These points are 
given by the residual intersections with the lines ${\rm L}_5$, ${\rm L}_6$, and ${\rm L}_4$, 
respectively. The images of these four points through the rational map:
\begin{equation}
\label{rationalm}
\frac{{\rm C}(x,y,z)}{{\rm L}_1(x,y,z) \cdot {\rm L}_2(x,y,z)}
\end{equation}
can be described as follows. The image through $(\ref{rationalm})$ of the intersection with ${\rm L}_5$ is:  
\begin{equation}
\label{forma}
A(t) \ = \  \frac{A_0+A_1 t}{24 (a^2 + b^2 + c^2 + d^2)}
\end{equation}
where:
\begin{align*}
A_0 \ =& \ -48 (a d-b c) (a c+b d) (a b-c d) (a^2+b^2+c^2+d^2) \ , \\
 A_1 \ =& \ a^2-b^2+c^2-d^2 \ .
\end{align*}
The image through $(\ref{rationalm})$ of the intersection with ${\rm L}_6$ is:

\begin{equation}
\label{formb}
B(t) \ = \  \frac{B_0+B_1 t}{24 (a^2 + b^2 + c^2 + d^2)}
\end{equation}
where:
\begin{align*}
B_0 \ =& \ 6 (a^2+b^2+c^2+d^2) (a^6-3 a^4 b^2+3 a^4 c^2-3 a^4 d^2-8 a^3 b c d+3 a^2 b^4+2 a^2 b^2 c^2-2 a^2 b^2 d^2+3 a^2 c^4+ \\
&+2 a^2 c^2 d^2
+3 a^2 d^4+8 a b^3 c d-8 a b c^3 d+8 a b c d^3-b^6+3 b^4 c^2-3 b^4 d^2-3 b^2 c^4-2 b^2 c^2 d^2-3 b^2 d^4+ \\
&+c^6-3 c^4 d^2+3 c^2 d^4-d^6) \ , \\
B_1 \ =& \ a^2-b^2+c^2-d^2 \ .
\end{align*}
Finally, the two points of intersection with ${\rm L}_4$, map under $(\ref{rationalm})$, to the two roots of the quadratic equation:
\begin{equation}
\label{formcde}
C(t) \cdot u^2+D u + E \ = \ 0 \ , 
\end{equation} 
where:
\begin{align*}
C(t) \ =& \ C_0 + C_1 t  \\
   C_1 \ =& \ a^2-b^2+c^2-d^2 \ , \\
   C_0 \ =& \ 6 (a^2+b^2+c^2+d^2) (a^6+a^4 b^2-a^4 c^2+a^4 d^2-8 a^3 b c d-a^2 b^4-10 a^2 b^2 c^2+10 a^2 b^2 d^2-a^2 c^4- \\
   &-10 a^2 c^2 d^2-a^2 d^4+ 
   +8 a b^3 c d-8 a b c^3 d+8 a b c d^3-b^6-b^4 c^2+b^4 d^2+b^2 c^4+10 b^2 c^2 d^2+b^2 d^4+ \\
   &+c^6+c^4 d^2-c^2 d^4-d^6) \ , \\
   D \ =& \ -24 (a^2 + b^2 + c^2 + d^2) (b^2 c^2 a^8 - b^2 d^2 a^8 + c^2 d^2 a^8 -
    b^2 c^4 a^6 - b^2 d^4 a^6 + c^2 d^4 a^6 + b^4 c^2 a^6 - 
   b^4 d^2 a^6 - \\
   &-c^4 d^2 a^6 + 6 b^2 c^2 d^2 a^6 
   -b^2 c^6 a^4 + 
   b^2 d^6 a^4 - c^2 d^6 a^4 - 6 b^4 c^4 a^4 - 6 b^4 d^4 a^4 - 
   6 c^4 d^4 a^4 + 2 b^2 c^2 d^4 a^4 - \\
   &-b^6 c^2 a^4 + b^6 d^2 a^4 - 
   c^6 d^2 a^4 - 2 b^2 c^4 d^2 a^4 
   + 2 b^4 c^2 d^2 a^4 + b^2 c^8 a^2 +
    b^2 d^8 a^2 - c^2 d^8 a^2 + b^4 c^6 a^2 - b^4 d^6 a^2 - \\
    &-
   c^4 d^6 a^2 + 6 b^2 c^2 d^6 a^2 - b^6 c^4 a^2 - b^6 d^4 a^2 + 
   c^6 d^4 a^2 + 2 b^2 c^4 d^4 a^2 - 2 b^4 c^2 d^4 a^2 - b^8 c^2 a^2 +
    b^8 d^2 a^2 + \\
    &+c^8 d^2 a^2 + 6 b^2 c^6 d^2 a^2 + 
   2 b^4 c^4 d^2 a^2 + 6 b^6 c^2 d^2 a^2 + b^2 c^2 d^8 + b^2 c^4 d^6 - 
    b^4 c^2 d^6 - b^2 c^6 d^4 - 6 b^4 c^4 d^4 - \\
    &- b^6 c^2 d^4 - 
   b^2 c^8 d^2 - b^4 c^6 d^2 + b^6 c^4 d^2 + b^8 c^2 d^2) , \ \\
   E \ =& \ -24 (a d - b c) (a d + b c) (a c - b d) (a c + b d) (a b - 
     c d) (a b + c d) (a^2 + b^2 - c^2 - d^2) (a^2 - b^2 - c^2 + 
     d^2) \\
     &(a^2 + b^2 + c^2 + d^2)^2 \ . 
\end{align*}
One obtains then an explicit affine expression for the elliptic fibration $ \varphi_{{\rm Y}}$ as:
\begin{equation}
\label{firstexpr}
   v^2 \ = \ \left ( \ u-A(t) \ \right ) \left ( \ u-B(t) \ \right ) \left ( \ C(t)  \cdot u^2 + D  u+E \ \right )
   \end{equation} 
\subsection{Adjustments to Formula $(\ref{firstexpr})$}
Next, we shall perform a series of transformations on the formula in expression $(\ref{firstexpr})$ with the goal of making a 
comparison with $(\ref{secondrightside})$. First, we shall perform a change in the affine coordinates $(u,v)$ setting 
\begin{align*}
u_1  \ =& \ \frac{ \ u-B(t) \ }{ \ u-A(t) \ }  \\
v_1 \ =& \ \frac{ \ v  \left (  A(t)-B(t)  \right ) \ }{ \ \left ( u-A(t) \right )^2  \left ( A^2(t)C(t) + A(t)D + E \right ) \ } \ .
\end{align*}
Intuitively, this operation amounts to sending $ A(t)$ to infinity and $ B(t)$ to zero. One obtains:
\begin{equation}
\label{secondexpr}
v_1^2 \ = \ u_1^3 + M(t) u_1^2 + N(t) u_1 
\end{equation}
where:
\begin{align*}
M(t) \ =& \ - \frac{ \ 2 A(t)B(t)C(t) + A(t)D + B(t)D + 2E \ }{ \ A(t)^2C(t) + A(t)D + E \ } \ , \\
N(t) \ =& \ \frac{ \ B^2(t) C(t) + B(t)D + E \ }{ \ A^2(t)C(t) + A(t) D + E \ }  \ .
   \end{align*}
An explicit evaluation of these two rational functions gives:
\begin{align*}
   M(t) \ =& \  - \frac{2 ( - t^3 + M_2 t^2 + M_1 t + M_0) }{t(M_3- t)(M_4+t)} \ , \\
   N(t) \ =& \  - \frac{ (N_1+t)(N_2+t)(N_3+t)  }{t(M_3- t)(M_4+t)} \ ,
\end{align*}
   where the coefficients $M_0$, $M_1$, $M_2$, $M_3$, $N_1$, $N_2$, $N_3$ are as follows:
\begin{align*}  
   M_0 \ =& \ 1728 (a^2+b^2+c^2+d^2)^3 (a^9 b c d-4 a^6 b^2 c^2 d^2-2 a^5 b^5 c d-2 a^5 b c^5 d-2 a^5 b c d^5+4 a^4 b^4 c^4+4 a^4 b^4 d^4 + \\
   &+4 a^4 c^4 d^4+8 a^3 b^3 c^3 d^3-4 a^2 b^6 c^2 d^2-4 a^2 b^2 c^6 d^2-4 a^2 b^2 c^2 d^6+a b^9 c d-2 a b^5 c^5 d-2 a b^5 c d^5+a b c^9 d- \\ 
   &-2 a b c^5 d^5+a b c d^9+4 b^4 c^4 d^4) \\
   M_1 \ =& \ -36 (a^2+b^2+c^2+d^2)^2 (a^8-32 a^5 b c d-2 a^4 b^4-2 a^4 c^4-2 a^4 d^4+136 a^2 b^2 c^2 d^2-32 a b^5 c d- \\
   & -32 a b c^5 d-32 a b c d^5
   +b^8-2 b^4 c^4-2 b^4 d^4+c^8-2 c^4 d^4+d^8) \\
   M_2 \ =& \ -12 (a^2+b^2+c^2+d^2) (a^4-12 a b c d+b^4+c^4+d^4) \\
   M_3 =& \ 96 a b c d (a^2+b^2+c^2+d^2) \\
   M_4 \ =& \ 6 (a^2+b^2+c^2+d^2) (a^2-2 a c+b^2-2 b d+c^2+d^2) (a^2+2 a c+b^2+2 b d+c^2+d^2) \\
   N_1 \ =& \ 6 (a-b-c-d) (a+b+c-d) (a+b-c+d) (a-b+c+d) (a^2+b^2+c^2+d^2) \\
   N_2 \ =& \ 6 (a^2+b^2+c^2+d^2) (a^2-2 a b+b^2+c^2-2 c d+d^2) (a^2+2 a b+b^2+c^2+2 c d+d^2) \\
   N_3 \ =& \ 6 (a^2+b^2+c^2+d^2) (a^2-2 a d+b^2-2 b c+c^2+d^2) (a^2+2 a d+b^2+2 b c+c^2+d^2) 
\end{align*}
Next, we get rid of denominators in $(\ref{secondexpr})$, making a change:
   $$ \widetilde{M}(t) = q \cdot M(t) \ \ \ \  \widetilde{N}(t) = q^2\cdot  N(t) $$
   where 
   $$ q \ = \ t(M_3- t)(M_4+t) \ . $$
Under the appropriate change $ (u_1, v_1) \rightarrow (u_2, v_2) $ in affine coordinates, one obtains:
\begin{equation}
\label{thirdexpr}
 v_2^2 \ = \ u_2^3 - 2 ( - t^3 + M_2 t^2 + M_1 t + M_0) u_2^2 - t(M_3- t)(M_4+t)(N_1+t)(N_2+t)(N_3+t)u_2 \ .
\end{equation}
We then eliminate the degree-two term from the polynomial $\widetilde{M}(t)$ by changing the affine coordinate of the fibration: 
\begin{equation}
\varepsilon \ = \ t - \frac{M_2}{3} \ = \ t + 4 (a^2+b^2+c^2+d^2) (a^4-12 a b c d+b^4+c^4+d^4) \ .
\end{equation}
At this point, the formula starts to clear miraculously. One obtains: 
\begin{equation}
\label{fourthexpr}
v_2^2 \ = \ u_2^3 + \widetilde{M}(\varepsilon) \cdot u_2^2 + \widetilde{N}(\varepsilon) \cdot u_2 
\end{equation}
with:
\begin{align*}
\widetilde{M}(\varepsilon) \ =& \ 2\varepsilon^3-24P_2^2P_8 \varepsilon-32P_2^3P_{12} \\
\widetilde{N}(\varepsilon) \ =& \ 
\varepsilon^6-24P_2^2P_8\varepsilon^4-32P_2^3P_{12}\varepsilon^3+144P_2^4P_8^2\varepsilon^2+384P_2^5P_{20}\varepsilon+254P_2^6P_{24} \ ,
\end{align*}
where the terms $P_2$, $P_8$, $P_{12}$, $P_{20}$, $P_{24}$ are homogeneous polynomials in the fundamental theta constants $a,b,c,d$. 
The precise form of $P_2$, $P_8$, $P_{12}$, $P_{20}$, $P_{24}$ is given in the appendix Section $\ref{specialpol}$.    
A final modification of the affine parameter of the fibration: 
$$ \eta = \frac{\varepsilon}{P_2} $$
allows one to describe this fibration as elliptic fibration $ \varphi_{{\rm Y}}$ as follows:
\begin{equation}
\label{fifthexpr}
v_3^2 \ = \ u_3^3 + \hat{M}(\eta) \cdot u_3^2 + \hat{N}(\eta) \cdot u_3 
\end{equation} 
\begin{equation}
\label{poly33}
 \hat{M}(\eta) \ = \  \frac{(-4)}{P_2^3} \cdot \widetilde{M}(P_2 \eta)\ = \ -4 \left (  2\eta^3 -24 P_8 \eta - 32 P_{12} \right ) 
 \end{equation}
\begin{equation} 
\label{poly66}
\hat{N}(\eta) \ = \ \frac{(-4)^2}{P_2^6} \cdot \widetilde{N}(P_2 \eta) \ = \ 16 \left ( \ 
\eta^6-24P_8 \eta^4-32P_{12}\eta^3+144P_8^2\eta^2+384P_{20}\eta+254P_{24} \ \right ) \ .
\end{equation}
\subsection{Matching of the Two Interpretations}
Comparing $(\ref{secondrightside})$ and $\ref{fifthexpr}$, one obtains that the two affine forms describe isomorphic elliptic fibration 
if and only if the following identities hold, up to a common weighted scaling of type $(2,3,5,6)$:
\begin{align*}
\alpha \ =& \ 2^4 P_8 \\
\beta \ =& \ 2^6 P_{12} \\
\gamma \ =& \ 2^{10} \cdot 3 \cdot \left ( P_{20} - P_8 P_{12} \right ) \ \\ %= \  2^{12} 3^5 C_{10} 
\delta \ =& \ 2^{12} \left ( P_{12}^2 - P_{24} \right )   \ . %\ = \ 2^{12} 3^6 C_{12} 
\end{align*}
Via identities $(\ref{eqforp20})$ and $(\ref{eqforp24})$ the above provides the following identity of weighted points in ${\rm WP}(2,3,5,6)$:
\begin{equation}
\label{almostthere}
\left [ \ \alpha, \beta, \gamma, \delta \ \right ] 
\ = \ \left [ \ 2^4 P_8, \ 2^6 P_{12}, \ - 2^{14} 3^5 Q_{20}, \ 2^{16} 3^5 Q_{24} \ \right ] \ .
\end{equation}
After taking into account formulas $(\ref{tesst1})$, identity $(\ref{almostthere})$ becomes:
\begin{equation}
\left [ \ \alpha, \beta, \gamma, \delta \ \right ] \ = \ \left [ \ \mathcal{E}_4, \ \mathcal{E}_6, \ 2^{12} 3^5 \mathcal{C}_{10} , 
\ 2^{12} 3^{6} \mathcal{C}_{12} \ \right ] \ . \ 
\end{equation}
This completes the proof of Theorem $\ref{maincomp}$.
\section{Appendix}
During the computation presented in this paper, a few special polynomials played an important role. 
We include their precise form in this appendix section. The homogeneous polynomials of this section 
have as parameters the four fundamental theta constants $a$, $b$, $c$, $d$ of Section $ \ref{therealkummerside} $. 
The polynomials presented below are available in electronic {\tt Mathematica} format at {\tt http://www.arch.umsl.edu/$\widetilde{ \ }$clingher/siegel-paper/Mathematica/}.
\subsection{Special Polynomials : $P_2$, $P_8$, $P_{12}$, $P_{20}$, $P_{24}$, $Q_{20}$, $Q_{24}$}
\label{specialpol}
\begin{align*}
P_2 \ =& \ a^2+b^2+c^2+d^2 \\ 
P_8 \ =& \ a^8+14 a^4 b^4+14 a^4 c^4+14 a^4 d^4+168 a^2 b^2 c^2 d^2+b^8+14 b^4 c^4+14 b^4 d^4+c^8+14 c^4 d^4+d^8   \\ 
P_{12} \ =& \ 
a^{12}-33 a^8 b^4-33 a^8 c^4-33 a^8 d^4+792 a^6 b^2 c^2 d^2-33 a^4 b^8+ 330 a^4 b^4 c^4+ 330 a^4 b^4 d^4-33 a^4 c^8+ \\
&+330 a^4 c^4 d^4 -33 a^4 d^8+792 a^2 b^6 c^2 d^2+792 a^2 b^2 c^6 d^2+792 a^2 b^2 c^2 d^6+b^{12}-33 b^8 c^4-33 b^8 d^4-33 b^4 c^8+\\
&+330 b^4 c^4 d^4 -33 b^4 d^8+c^{12}-33 c^8 d^4-33 c^4 d^8+d^{12} \\
P_{20} \ =& \ 
a^{20}-19 b^4 a^{16}-19 c^4 a^{16}-19 d^4 a^{16}-336 b^2 c^2 d^2 a^{14}-494 b^8 a^{12}-494 c^8 a^{12}-494 d^8 a^{12}+716 b^4 c^4 a^{12}+ \\
&
+716 b^4 d^4 a^{12} + 716 c^4 d^4 a^{12}+7632 b^2 c^2 d^6 a^{10}+7632 b^2 c^6 d^2 a^{10}+7632 b^6 c^2 d^2 a^{10}-494 b^{12} a^8-494 c^{12} a^8- \\
&
-494 d^{12} a^8+1038 b^4 c^8 a^8+1038 b^4 d^8 a^8+1038 c^4 d^8 a^8+1038 b^8 c^4 a^8+1038 b^8 d^4 a^8+1038 c^8 d^4 a^8+ \\
&
+129012 b^4 c^4 d^4 a^8+7632 b^2 c^2 d^{10} a^6+106848 b^2 c^6 d^6 a^6+106848 b^6 c^2 d^6 a^6+7632 b^2 c^{10} d^2 a^6+ \\
&
+106848 b^6 c^6 d^2 a^6+7632 b^{10} c^2 d^2 a^6-19 b^{16} a^4-19 c^{16} a^4-19 d^{16} a^4+716 b^4 c^{12} a^4+716 b^4 d^{12} a^4+ \\
&
+716 c^4 d^{12} a^4+1038 b^8 c^8 a^4+1038 b^8 d^8 a^4+1038 c^8 d^8 a^4+129012 b^4 c^4 d^8 a^4+716 b^{12} c^4 a^4+716 b^{12} d^4 a^4+ \\
&
+716 c^{12} d^4 a^4+129012 b^4 c^8 d^4 a^4+129012 b^8 c^4 d^4 a^4-336 b^2 c^2 d^{14} a^2+7632 b^2 c^6 d^{10} a^2+7632 b^6 c^2 d^{10} a^2+ \\
&
+7632 b^2 c^{10} d^6 a^2+106848 b^6 c^6 d^6 a^2+7632 b^{10} c^2 d^6 a^2-336 b^2 c^{14} d^2 a^2+7632 b^6 c^{10} d^2 a^2+7632 b^{10} c^6 d^2 a^2- \\
&
-336 b^{14} c^2 d^2 a^2+b^{20}+c^{20}+d^{20}-19 b^4 c^{16}-19 b^4 d^{16}-19 c^4 d^{16}-494 b^8 c^{12}-494 b^8 d^{12}-494 c^8 d^{12}+ \\
&
+716 b^4 c^4 d^{12}-494 b^{12} c^8-494 b^{12} d^8-494 c^{12} d^8+1038 b^4 c^8 d^8+1038 b^8 c^4 d^8-19 b^{16} c^4-19 b^{16} d^4- \\
&
-19 c^{16} d^4+716 b^4 c^{12} d^4+1038 b^8 c^8 d^4+716 b^{12} c^4 d^4 \\
Q_{20} \ =& \ (b c-a d) (a d+b c) (b d-a c) (a c+b d) (a b-c d) (a b+c d) (a^2+b^2-c^2-d^2) (-a^2+b^2+c^2-d^2) \\
&(-a^2+b^2-c^2+d^2) (a^2+b^2+c^2+d^2) \\
\end{align*}
\begin{align*}
P_{24} \ =& \ 
 (a^4 - 12 a b c d + b^4 + c^4 + d^4) (a^4 + 12 a b c d + b^4 + c^4 + d^4) \\
&  (a^4 - 6 a^2 b^2 - 6 a^2 c^2 - 6 a^2 d^2 + b^4 - 6 b^2 c^2 - 6 b^2 d^2 + c^4 - 6 c^2 d^2 + d^4) \\
& (a^4 - 6 a^2 b^2 + 6 a^2 c^2 + 6 a^2 d^2 + b^4 + 6 b^2 c^2 + 6 b^2 d^2 + c^4 - 6 c^2 d^2 + d^4) \\
& (a^4 + 6 a^2 b^2 - 6 a^2 c^2 + 6 a^2 d^2 + b^4 + 6 b^2 c^2 - 6 b^2 d^2 + c^4 + 6 c^2 d^2 + d^4) \\
& (a^4 + 6 a^2 b^2 + 6 a^2 c^2 - 6 a^2 d^2 + b^4 - 6 b^2 c^2 + 6 b^2 d^2 + c^4 + 6 c^2 d^2 + d^4) \\
%
%
%\end{align*}
%\begin{align*}
%
Q_{24} \ =& \ 
b^2 c^2 d^2 a^{18}+2 b^4 c^4 a^{16}+2 b^4 d^4 a^{16}+2 c^4 d^4 a^{16}-12 b^2 c^2 d^6 a^{14}-12 b^2 c^6 d^2 a^{14}-12 b^6 c^2 d^2 a^{14}-2 b^4 c^8 a^{12}- \\
& -2 b^4 d^8 a^{12}-2 c^4 d^8 a^{12}-2 b^8 c^4 a^{12}-2 b^8 d^4 a^{12}-2 c^8 d^4 a^{12}+76 b^4 c^4 d^4 a^{12}+22 b^2 c^2 d^{10} a^{10}-52 b^2 c^6 d^6 a^{10}- \\
&-52 b^6 c^2 d^6 a^{10}+22 b^2 c^{10} d^2 a^{10}-52 b^6 c^6 d^2 a^{10}+22 b^{10} c^2 d^2 a^{10}-2 b^4 c^{12} a^8-2 b^4 d^{12} a^8-2 c^4 d^{12} a^8+\\
&+36 b^8 c^8 a^8+36 b^8 d^8 a^8+36 c^8 d^8 a^8+36 b^4 c^4 d^8 a^8-2 b^{12} c^4 a^8-2 b^{12} d^4 a^8-2 c^{12} d^4 a^8+36 b^4 c^8 d^4 a^8+\\
&+36 b^8 c^4 d^4 a^8-12 b^2 c^2 d^{14} a^6-52 b^2 c^6 d^{10} a^6-52 b^6 c^2 d^{10} a^6-52 b^2 c^{10} d^6 a^6-8 b^6 c^6 d^6 a^6-52 b^{10} c^2 d^6 a^6-\\
&-12 b^2 c^{14} d^2 a^6-52 b^6 c^{10} d^2 a^6-52 b^{10} c^6 d^2 a^6-12 b^{14} c^2 d^2 a^6+2 b^4 c^{16} a^4+2 b^4 d^{16} a^4+2 c^4 d^{16} a^4-2 b^8 c^{12} a^4-\\
&-2 b^8 d^{12} a^4-2 c^8 d^{12} a^4+76 b^4 c^4 d^{12} a^4-2 b^{12} c^8 a^4-2 b^{12} d^8 a^4-2 c^{12} d^8 a^4+36 b^4 c^8 d^8 a^4+36 b^8 c^4 d^8 a^4+\\
&+2 b^{16} c^4 a^4+2 b^{16} d^4a^4+2 c^{16} d^4 a^4+76 b^4 c^{12} d^4 a^4+36 b^8 c^8 d^4 a^4+76 b^{12} c^4 d^4 a^4+b^2 c^2 d^{18} a^2-12 b^2 c^6 d^{14} a^2-\\
&-12 b^6 c^2 d^{14} a^2+22 b^2 c^{10} d^{10} a^2-52 b^6 c^6 d^{10} a^2+22 b^{10} c^2 d^{10} a^2-12 b^2 c^{14} d^6 a^2-52 b^6 c^{10} d^6 a^2-52 b^{10} c^6 d^6 a^2-\\
&-12 b^{14} c^2 d^6 a^2+b^2 c^{18} d^2 a^2-12 b^6 c^{14} d^2 a^2+22 b^{10} c^{10} d^2 a^2-12 b^{14} c^6 d^2 a^2+b^{18} c^2 d^2 a^2+2 b^4 c^4 d^{16}-\\
&-2 b^4 c^8 d^{12}-2 b^8 c^4 d^{12}-2 b^4 c^{12} d^8+36 b^8 c^8 d^8-2 b^{12} c^4 d^8+2 b^4 c^{16} d^4-2 b^8 c^{12} d^4-2 b^{12} c^8 d^4+2 b^{16} c^4 d^4 
\end{align*}
The above polynomials satisfy the following relations:
\begin{equation}
\label{eqforp20}
 P_{20}-P_{8} \cdot P_{12} \ = \ - 2^4 3^4 Q_{20}  
\end{equation}
\begin{equation}
\label{eqforp24}
 P_{12}^2 - P_{24} \ = \ 2^4 \cdot 3^5 \cdot Q_{24} 
\end{equation}
\subsection{Coefficients of the Quintic ${\rm QIN}_2(x,y,z)$}
\label{coefquint2}
\begin{align*} 
k_{500} \ =& \  0 \\
k_{005} \ =&  \ 0 %& & & & & & & & & & & & & & & & & 
\\
 k_{050} \ =& \ -8 (b c + a d)^2 (a b - c d)^3 (a^2 + b^2 - c^2 - d^2) (-a^2 + b^2 - c^2 + d^2)^2\\
 k_{410} \ =& \ 4 (b c + a d) (a c + b d)^3 (a^2 + b^2 - c^2 - d^2)^2 (-a^2 + b^2 - c^2 + d^2)^2 \\
 k_{401} \ =& \ 16 (b c - a d)^3 (b c + a d) (a c + b d)^2 (a^2 + b^2 - c^2 - d^2) (a^2 + b^2 + c^2 + d^2) \\
 k_{140} \ =& \ 4 (b c + a d) (a b - c d)^2 (a^2 + b^2 - c^2 - d^2) (-a^2 + b^2 - 
   c^2 + d^2)^2 (a^3 b + a b^3 - 7 a b c^2 - 7 a^2 c d - \\
   &
 - 7 b^2 c d + 
   c^3 d - 7 a b d^2 + c d^3) \\
 k_{041} \ =& \ -16 (b c + a d) (a b - c d)^3 (a^2 + b^2 - c^2 - d^2) (-a^2 + b^2 - 
   c^2 + d^2) (a^3 c - 2 a b^2 c + a c^3 - 2 a^2 b d + \\
   & + b^3 d -2 b c^2 d - 2 a c d^2 + b d^3) \\
k_{320} \ =& \ 4 (b c + a d) (a c + b d)^2 (a^2 + b^2 - c^2 - d^2) (-a^2 + b^2 - 
   c^2 + d^2)^2 (3 a^3 b + 3 a b^3 - 5 a b c^2 - 5 a^2 c d - \\
   &
 -  5 b^2 c d + 3 c^3 d - 5 a b d^2 + 3 c d^3) \\
 k_{302} \ =& \ -16 (-b c + a d)^2 (b c + a d) (a c + b d)^2 (a^2 + b^2 + c^2 + 
   d^2) (a^3 b + a b^3 - 3 a b c^2 + 3 a^2 c d + 3 b^2 c d - \\
   &-c^3 d - 3 a b d^2 - c d^3)  \\
k_{230} \ =& \ 12 (b c + a d) (a c + b d) (a b - c d) (a^2 + b^2 - c^2 - 
   d^2) (-a^2 + b^2 - c^2 + d^2)^2 (a^3 b + a b^3 - 3 a b c^2 - \\
   & -3 a^2 c d - 3 b^2 c d + c^3 d - 3 a b d^2 + c d^3) 
   \end{align*}
   \begin{align*}
k_{221} \ =& \ 8 (a b - c d) (2 a^{11} b c^2 - a^9 b^3 c^2 - 4 a^7 b^5 c^2 + 
   2 a^5 b^7 c^2 + 2 a^3 b^9 c^2 - a b^{11} c^2 - 3 a^9 b c^4 + 
   10 a^7 b^3 c^4 - \\
   &-4 a^5 b^5 c^4 - 22 a^3 b^7 c^4 - 5 a b^9 c^4 - 
   6 a^7 b c^6 - 8 a^5 b^3 c^6 + 34 a^3 b^5 c^6 + 4 a b^7 c^6 + 
   4 a^5 b c^8 - 22 a^3 b^3 c^8 + \\
   &+6 a b^5 c^8 + 4 a^3 b c^{10} - 
   3 a b^3 c^{10} - a b c^{12} + a^{12} c d - 3 a^8 b^4 c d + 
   3 a^4 b^8 c d - b^{12} c d - 4 a^{10} c^3 d + a^8 b^2 c^3 d + \\
   & + 
   8 a^6 b^4 c^3 d - 6 a^4 b^6 c^3 d - 12 a^2 b^8 c^3 d - 
   3 b^{10} c^3 d - 4 a^8 c^5 d - 18 a^6 b^2 c^5 d + 18 a^4 b^4 c^5 d + 
   38 a^2 b^6 c^5 d + \\
   &+ 6 b^8 c^5 d + 6 a^6 c^7 d -12 a^4 b^2 c^7 d - 
   26 a^2 b^4 c^7 d + 4 b^6 c^7 d + 3 a^4 c^9 d + 6 a^2 b^2 c^9 d - 
   5 b^4 c^9 d - 2 a^2 c^{11} d - \\
   &-b^2 c^{11} d + a^{11} b d^2 - 
   2 a^9 b^3 d^2 - 2 a^7 b^5 d^2 + 4 a^5 b^7 d^2 + a^3 b^9 d^2 - 
   2 a b^{11} d^2 - 6 a^9 b c^2 d^2 + 16 a^7 b^3 c^2 d^2 - \\
   &-16 a^3 b^7 c^2 d^2 + 6 a b^9 c^2 d^2 +  12 a^7 b c^4 d^2 - 
   34 a^5 b^3 c^4 d^2 + 24 a^3 b^5 c^4 d^2 - 26 a b^7 c^4 d^2 + 
   18 a^5 b c^6 d^2 - \\
   &-8 a^3 b^3 c^6 d^2 + 38 a b^5 c^6 d^2 - 
   a^3 b c^8 d^2 -12 a b^3 c^8 d^2 + 3 a^{10} c d^3 + 
   12 a^8 b^2 c d^3 + 6 a^6 b^4 c d^3 - 8 a^4 b^6 c d^3 - \\
   &- a^2 b^8 c d^3 + 4 b^{10} c d^3 +22 a^8 c^3 d^3 + 
   8 a^6 b^2 c^3 d^3 - 8 a^2 b^6 c^3 d^3 - 22 b^8 c^3 d^3 + 
   8 a^6 c^5 d^3 + 34 a^4 b^2 c^5 d^3 + \\
   &+24 a^2 b^4 c^5 d^3 + 
   34 b^6 c^5 d^3 - 10 a^4 c^7 d^3 - 16 a^2 b^2 c^7 d^3 - 
   22 b^4 c^7 d^3 + a^2 c^9 d^3 + 2 b^2 c^9 d^3 + 5 a^9 b d^4 + \\
   &+ 22 a^7 b^3 d^4 + 4 a^5 b^5 d^4 -10 a^3 b^7 d^4 + 3 a b^9 d^4 + 
   26 a^7 b c^2 d^4 - 24 a^5 b^3 c^2 d^4 + 34 a^3 b^5 c^2 d^4 - 
   12 a b^7 c^2 d^4 - \\
   &-18 a^5 b c^4 d^4 +18 a b^5 c^4 d^4 - 
   8 a^3 b c^6 d^4 - 6 a b^3 c^6 d^4 + 3 a b c^8 d^4 - 6 a^8 c d^5 - 
   38 a^6 b^2 c d^5 - 18 a^4 b^4 c d^5 + \\
   &+18 a^2 b^6 c d^5 + 
   4 b^8 c d^5 -34 a^6 c^3 d^5 - 24 a^4 b^2 c^3 d^5 - 
   34 a^2 b^4 c^3 d^5 - 8 b^6 c^3 d^5 + 4 a^4 c^5 d^5 - 
   4 b^4 c^5 d^5 + \\
   &+4 a^2 c^7 d^5 + 2 b^2 c^7 d^5 - 4 a^7 b d^6 -
   34 a^5 b^3 d^6 + 8 a^3 b^5 d^6 + 6 a b^7 d^6 - 38 a^5 b c^2 d^6 + 
   8 a^3 b^3 c^2 d^6 - 18 a b^5 c^2 d^6 + \\
   &+6 a^3 b c^4 d^6 + 
   8 a b^3 c^4 d^6 - 4 a^6 c d^7 +26 a^4 b^2 c d^7 + 
   12 a^2 b^4 c d^7 - 6 b^6 c d^7 + 22 a^4 c^3 d^7 + 
   16 a^2 b^2 c^3 d^7 + \\
   &+10 b^4 c^3 d^7 - 2 a^2 c^5 d^7 - 
   4 b^2 c^5 d^7 - 6 a^5 b d^8 + 22 a^3 b^3 d^8 - 4 a b^5 d^8 + 
   12 a^3 b c^2 d^8 + a b^3 c^2 d^8 - 3 a b c^4 d^8 + \\
   &+5 a^4 c d^9 - 
   6 a^2 b^2 c d^9 -3 b^4 c d^9 
   -2 a^2 c^3 d^9 - b^2 c^3 d^9 + 
   3 a^3 b d^{10} - 4 a b^3 d^{10} + a^2 c d^{11} + 2 b^2 c d^{11} + a b d^{12}) \\ & \\
   %
   %
   %
 %  \end{align*}
 %  \begin{align*}
k_{212} \ =& \  (a c + b d) (5 a^{12} b c - 6 a^{10} b^3 c - 9 a^8 b^5 c + 
   12 a^6 b^7 c + 3 a^4 b^9 c - 6 a^2 b^{11} c + b^{13} c - 
   6 a^{10} b c^3 
   +18 a^8 b^3 c^3 - \\
   &-20 a^6 b^5 c^3 - 84 a^4 b^7 c^3 - 
   38 a^2 b^9 c^3 + 2 b^{11} c^3 - 9 a^8 b c^5 - 4 a^6 b^3 c^5 + 
   178 a^4 b^5 c^5 + 60 a^2 b^7 c^5 -b^9 c^5 + \\
   &+12 a^6 b c^7 - 
   52 a^4 b^3 c^7 + 76 a^2 b^5 c^7 - 4 b^7 c^7 + 3 a^4 b c^9 - 
   22 a^2 b^3 c^9 - b^5 c^9 - 6 a^2 b c^{11} +2 b^3 c^{11} + b c^{13} + \\
   &+
   a^{13} d - 6 a^{11} b^2 d + 3 a^9 b^4 d + 12 a^7 b^6 d - 9 a^5 b^8 d - 
   6 a^3 b^{10} d + 5 a b^{12} d - 6 a^{11} c^2 d - 6 a^9 b^2 c^2 d + \\
   &+
   12 a^7 b^4 c^2 d + 60 a^5 b^6 c^2 d + 58 a^3 b^8 c^2 d + 
   10 a b^{10} c^2 d + 3 a^9 c^4 d - 4 a^7 b^2 c^4 d + 
   10 a^5 b^4 c^4 d - \\
   &-132 a^3 b^6 c^4 d -37 a b^8 c^4 d + 
   12 a^7 c^6 d + 28 a^5 b^2 c^6 d - 148 a^3 b^4 c^6 d - 
   84 a b^6 c^6 d - 9 a^5 c^8 d + 42 a^3 b^2 c^8 d - \\
   &-37 a b^4 c^8 d - 
   6 a^3 c^{10} d + 10 a b^2 c^{10} d + 5 a c^{12} d + 10 a^{10} b c d^2 + 
   58 a^8 b^3 c d^2 + 60 a^6 b^5 c d^2 + 12 a^4 b^7 c d^2 - \\
   &-
   6 a^2 b^9 c d^2 -6 b^{11} c d^2 + 42 a^8 b c^3 d^2 - 
   56 a^6 b^3 c^3 d^2 + 60 a^4 b^5 c^3 d^2 - 88 a^2 b^7 c^3 d^2 - 
   22 b^9 c^3 d^2 + 28 a^6 b c^5 d^2 + \\
   &+ 60 a^4 b^3 c^5 d^2 + 
   476 a^2 b^5 c^5 d^2 + 76 b^7 c^5 d^2 - 4 a^4 b c^7 d^2 - 
   88 a^2 b^3 c^7 d^2 + 60 b^5 c^7 d^2 - 6 a^2 b c^9 d^2 - 
   38 b^3 c^9 d^2 - \\
   &-6 b c^{11} d^2 +2 a^{11} d^3 - 38 a^9 b^2 d^3 - 
   84 a^7 b^4 d^3 - 20 a^5 b^6 d^3 + 18 a^3 b^8 d^3 - 6 a b^{10} d^3 - 
   22 a^9 c^2 d^3 - \\
   &-88 a^7 b^2 c^2 d^3 + 60 a^5 b^4 c^2 d^3 -
   56 a^3 b^6 c^2 d^3 + 42 a b^8 c^2 d^3 - 52 a^7 c^4 d^3 + 
   60 a^5 b^2 c^4 d^3 - 340 a^3 b^4 c^4 d^3 - \\
   &-148 a b^6 c^4 d^3 - 
   4 a^5 c^6 d^3 - 56 a^3 b^2 c^6 d^3 - 132 a b^4 c^6 d^3 + 
   18 a^3 c^8 d^3 + 58 a b^2 c^8 d^3 - 6 a c^{10} d^3 - 
   37 a^8 b c d^4 - \\
   &-132 a^6 b^3 c d^4 + 10 a^4 b^5 c d^4 - 
   4 a^2 b^7 c d^4 +3 b^9 c d^4 - 148 a^6 b c^3 d^4 - 
   340 a^4 b^3 c^3 d^4 + 60 a^2 b^5 c^3 d^4 - 52 b^7 c^3 d^4 + \\
   &+
   10 a^4 b c^5 d^4 + 60 a^2 b^3 c^5 d^4 + 178 b^5 c^5 d^4 + 
   12 a^2 b c^7 d^4 - 84 b^3 c^7 d^4 + 3 b c^9 d^4 - a^9 d^5 + 
   60 a^7 b^2 d^5 + \\
   &+178 a^5 b^4 d^5 - 4 a^3 b^6 d^5 - 9 a b^8 d^5 + 
   76 a^7 c^2 d^5 + 476 a^5 b^2 c^2 d^5 +60 a^3 b^4 c^2 d^5 + 
   28 a b^6 c^2 d^5 + 178 a^5 c^4 d^5 + \\
   &+60 a^3 b^2 c^4 d^5 + 
   10 a b^4 c^4 d^5 - 20 a^3 c^6 d^5 + 60 a b^2 c^6 d^5 - 
   9 a c^8 d^5 - 84 a^6 b c d^6 - 148 a^4 b^3 c d^6 + 
   28 a^2 b^5 c d^6 + \\
   &+12 b^7 c d^6 - 132 a^4 b c^3 d^6 - 
   56 a^2 b^3 c^3 d^6 - 4 b^5 c^3 d^6 + 60 a^2 b c^5 d^6 -
   20 b^3 c^5 d^6 + 12 b c^7 d^6 - 4 a^7 d^7 + \\
   &+76 a^5 b^2 d^7 - 
   52 a^3 b^4 d^7 + 12 a b^6 d^7 + 60 a^5 c^2 d^7 - 
   88 a^3 b^2 c^2 d^7  4 a b^4 c^2 d^7 - 84 a^3 c^4 d^7 + 
   12 a b^2 c^4 d^7 + \\
   &+12 a c^6 d^7 - 37 a^4 b c d^8 + 
   42 a^2 b^3 c d^8 - 9 b^5 c d^8 + 58 a^2 b c^3 d^8 +
   18 b^3 c^3 d^8 - 9 b c^5 d^8 - a^5 d^9 - 22 a^3 b^2 d^9 + \\
   &+
   3 a b^4 d^9 - 38 a^3 c^2 d^9 - 6 a b^2 c^2 d^9 + 3 a c^4 d^9 + 
   10 a^2 b c d^{10} - 6 b^3 c d^{10} -6 b c^3 d^{10} + 2 a^3 d^{11} - \\
   &-
   6 a b^2 d^{11} - 6 a c^2 d^{11} + 5 b c d^{12} + a d^{13}) \\
     \end{align*}
  \begin{align*}
 k_{122} \ =& \  (a b - c d) (5 a^{12} b c - 6 a^{10} b^3 c - 9 a^8 b^5 c + 
   12 a^6 b^7 c + 3 a^4 b^9 c - 6 a^2 b^{11} c + b^{13} c - 
   14 a^{10} b c^3 + 58 a^8 b^3 c^3 + \\
   &+4 a^6 b^5 c^3 - 92 a^4 b^7 c^3 - 
   22 a^2 b^9 c^3 + 2 b^{11} c^3 - 17 a^8 b c^5 - 20 a^6 b^3 c^5 + 
   234 a^4 b^5 c^5 + 28 a^2 b^7 c^5 - b^9 c^5 + \\
   &+20 a^6 b c^7 - 
   108 a^4 b^3 c^7 + 28 a^2 b^5 c^7 - 4 b^7 c^7 + 11 a^4 b c^9 - 
   22 a^2 b^3 c^9 - b^5 c^9 - 6 a^2 b c^{11} + 2 b^3 c^{11} + b c^{13} + \\
   &+
   a^{13} d - 6 a^{11} b^2 d +3 a^9 b^4 d + 12 a^7 b^6 d - 9 a^5 b^8 d - 
   6 a^3 b^{10} d + 5 a b^{12} d - 6 a^{11} c^2 d + 34 a^9 b^2 c^2 d - \\
   &-
   12 a^7 b^4 c^2 d - 12 a^5 b^6 c^2 d +50 a^3 b^8 c^2 d + 
   10 a b^{10} c^2 d + 11 a^9 c^4 d - 4 a^7 b^2 c^4 d + 
   146 a^5 b^4 c^4 d - 52 a^3 b^6 c^4 d - \\
   &-5 a b^8 c^4 d + 
   20 a^7 c^6 d +4 a^5 b^2 c^6 d - 36 a^3 b^4 c^6 d - 
   20 a b^6 c^6 d - 17 a^5 c^8 d + 58 a^3 b^2 c^8 d - 5 a b^4 c^8 d - \\
   &-
   14 a^3 c^{10} d + 10 a b^2 c^{10} d + 5 a c^{12} d +10 a^{10} b c d^2 + 
   50 a^8 b^3 c d^2 - 12 a^6 b^5 c d^2 - 12 a^4 b^7 c d^2 + 
   34 a^2 b^9 c d^2 - \\
   &-6 b^{11} c d^2 + 58 a^8 b c^3 d^2 - 
   120 a^6 b^3 c^3 d^2 
   +12 a^4 b^5 c^3 d^2 - 248 a^2 b^7 c^3 d^2 - 
   22 b^9 c^3 d^2 + 4 a^6 b c^5 d^2 - \\
   &-12 a^4 b^3 c^5 d^2 + 
   332 a^2 b^5 c^5 d^2 + 28 b^7 c^5 d^2 - 4 a^4 b c^7 d^2 - 
   248 a^2 b^3 c^7 d^2 + 28 b^5 c^7 d^2 + 34 a^2 b c^9 d^2 - \\
   &-
   22 b^3 c^9 d^2 - 6 b c^{11} d^2 + 2 a^{11} d^3 - 22 a^9 b^2 d^3 - 
   92 a^7 b^4 d^3 + 4 a^5 b^6 d^3 + 58 a^3 b^8 d^3 -14 a b^{10} d^3 - \\
   &-
   22 a^9 c^2 d^3 - 248 a^7 b^2 c^2 d^3 + 12 a^5 b^4 c^2 d^3 - 
   120 a^3 b^6 c^2 d^3 + 58 a b^8 c^2 d^3 - 108 a^7 c^4 d^3 - 
   12 a^5 b^2 c^4 d^3 - \\
   &-100 a^3 b^4 c^4 d^3 - 36 a b^6 c^4 d^3 - 
   20 a^5 c^6 d^3 - 120 a^3 b^2 c^6 d^3 - 52 a b^4 c^6 d^3 + 
   58 a^3 c^8 d^3 + 50 a b^2 c^8 d^3 - \\
   &-6 a c^{10} d^3 - 5 a^8 b c d^4 - 
   52 a^6 b^3 c d^4 + 146 a^4 b^5 c d^4 - 4 a^2 b^7 c d^4 + 
   11 b^9 c d^4 - 36 a^6 b c^3 d^4 - 100 a^4 b^3 c^3 d^4 - \\
   &- 
   12 a^2 b^5 c^3 d^4 - 108 b^7 c^3 d^4 +146 a^4 b c^5 d^4 + 
   12 a^2 b^3 c^5 d^4 + 234 b^5 c^5 d^4 - 12 a^2 b c^7 d^4 - 
   92 b^3 c^7 d^4 + 3 b c^9 d^4 - \\
   &-a^9 d^5 + 28 a^7 b^2 d^5 + 
   234 a^5 b^4 d^5 -20 a^3 b^6 d^5 - 17 a b^8 d^5 + 28 a^7 c^2 d^5 + 
   332 a^5 b^2 c^2 d^5 - 12 a^3 b^4 c^2 d^5 + \\
   &+4 a b^6 c^2 d^5 + 
   234 a^5 c^4 d^5 + 12 a^3 b^2 c^4 d^5 +146 a b^4 c^4 d^5 + 
   4 a^3 c^6 d^5 - 12 a b^2 c^6 d^5 - 9 a c^8 d^5 - 20 a^6 b c d^6 - \\
   &- 
   36 a^4 b^3 c d^6 + 4 a^2 b^5 c d^6 + 20 b^7 c d^6 - 
   52 a^4 b c^3 d^6 
   -120 a^2 b^3 c^3 d^6 - 20 b^5 c^3 d^6 - 
   12 a^2 b c^5 d^6 + 4 b^3 c^5 d^6 + \\
   &+12 b c^7 d^6 - 4 a^7 d^7 + 
   28 a^5 b^2 d^7 - 108 a^3 b^4 d^7 + 20 a b^6 d^7 +28 a^5 c^2 d^7 - 
   248 a^3 b^2 c^2 d^7 - 4 a b^4 c^2 d^7 - \\
   &-92 a^3 c^4 d^7 - 
   12 a b^2 c^4 d^7 + 12 a c^6 d^7 - 5 a^4 b c d^8 + 
   58 a^2 b^3 c d^8 -17 b^5 c d^8 + 50 a^2 b c^3 d^8 + 
   58 b^3 c^3 d^8 - \\
   &-9 b c^5 d^8 - a^5 d^9 - 22 a^3 b^2 d^9 + 
   11 a b^4 d^9 - 22 a^3 c^2 d^9 + 34 a b^2 c^2 d^9 + 3 a c^4 d^9 
   + 
   10 a^2 b c d^{10} - 14 b^3 c d^{10} - \\
   &-6 b c^3 d^{10} + 2 a^3 d^{11} - 
   6 a b^2 d^{11} - 6 a c^2 d^{11} + 5 b c d^{12} + a d^{13}) \\
   %
   %
   %
   %
   %
   %
   %
% \end{align*}
% \begin{align*}
k_{203} \ =& \ -32 (-b c + a d)^2 (b c + a d) (a c - b d) (a c + b d)^2 (a b + c d) (a^2 + b^2 + c^2 + d^2) \\
k_{032} \ =& \ -8 (-a c + b d) (a b - c d)^3 (a^2 + b^2 - c^2 - d^2) (-a^2 + b^2 - 
   c^2 + d^2) (a^3 c - 5 a b^2 c + a c^3 - 5 a^2 b d + \\
   &+b^3 d - 
   5 b c^2 d - 5 a c d^2 + b d^3) \\
k_{023} \ =& \ 16 (-a c + b d)^2 (a b - c d)^3 (a b + c d) (a^2 + b^2 - c^2 - d^2) (-a^2 + b^2 - c^2 + d^2) \\
k_{311} \ =& \ 
   4 (a c + b d) (-2 a^{11} b c^2 + a^9 b^3 c^2 + 4 a^7 b^5 c^2 - 
   2 a^5 b^7 c^2 - 2 a^3 b^9 c^2 + a b^{11} c^2 + a^9 b c^4 - 
   6 a^7 b^3 c^4 + \\
   &+12 a^5 b^5 c^4 
   +30 a^3 b^7 c^4 + 11 a b^9 c^4 + 
   4 a^7 b c^6 + 4 a^5 b^3 c^6 - 24 a^3 b^5 c^6 - 4 a b^7 c^6 - 
   2 a^5 b c^8 + 14 a^3 b^3 c^8 - \\
   &-12 a b^5 c^8 - 2 a^3 b c^{10} + 
   3 a b^3 c^{10} + a b c^{12} - a^{12} c d + 3 a^8 b^4 c d - 
   3 a^4 b^8 c d + b^{12} c d + 2 a^{10} c^3 d + 3 a^8 b^2 c^3 d - \\
   &-12 a^6 b^4 c^3 d - 22 a^4 b^6 c^3 d - 6 a^2 b^8 c^3 d +
   3 b^{10} c^3 d + 2 a^8 c^5 d + 14 a^6 b^2 c^5 d - 14 a^4 b^4 c^5 d - 
   38 a^2 b^6 c^5 d - \\
   &-12 b^8 c^5 d - 4 a^6 c^7 d + 4 a^4 b^2 c^7 d + 
   32 a^2 b^4 c^7 d - 4 b^6 c^7 d - a^4 c^9 d - 6 a^2 b^2 c^9 d + 
   11 b^4 c^9 d + 2 a^2 c^{11} d + \\
   &+b^2 c^{11} d - a^{11} b d^2 + 
   2 a^9 b^3 d^2 + 2 a^7 b^5 d^2 - 4 a^5 b^7 d^2 - a^3 b^9 d^2 + 
   2 a b^{11} d^2 + 6 a^9 b c^2 d^2 - 4 a^7 b^3 c^2 d^2 + \\
   &+
   4 a^3 b^7 c^2 d^2 - 6 a b^9 c^2 d^2 - 4 a^7 b c^4 d^2 + 
   44 a^5 b^3 c^4 d^2 + 20 a^3 b^5 c^4 d^2 + 32 a b^7 c^4 d^2 - 
   14 a^5 b c^6 d^2 + \\
   &+4 a^3 b^3 c^6 d^2 - 38 a b^5 c^6 d^2 - 
   3 a^3 b c^8 d^2 - 6 a b^3 c^8 d^2 - 3 a^{10} c d^3 + 
   6 a^8 b^2 c d^3 + 22 a^6 b^4 c d^3 +12 a^4 b^6 c d^3 - \\
   &-
   3 a^2 b^8 c d^3 - 2 b^{10} c d^3 - 14 a^8 c^3 d^3 - 
   4 a^6 b^2 c^3 d^3 + 4 a^2 b^6 c^3 d^3 + 14 b^8 c^3 d^3 - 
   4 a^6 c^5 d^3 -44 a^4 b^2 c^5 d^3 + \\
   &+20 a^2 b^4 c^5 d^3 - 
   24 b^6 c^5 d^3 + 6 a^4 c^7 d^3 + 4 a^2 b^2 c^7 d^3 + 
   30 b^4 c^7 d^3 - a^2 c^9 d^3 - 2 b^2 c^9 d^3 - 11 a^9 b d^4 - \\
   &- 
   30 a^7 b^3 d^4 - 12 a^5 b^5 d^4 + 6 a^3 b^7 d^4 - a b^9 d^4 - 
   32 a^7 b c^2 d^4 - 20 a^5 b^3 c^2 d^4 - 44 a^3 b^5 c^2 d^4 + 
   4 a b^7 c^2 d^4 + \\
   &+14 a^5 b c^4 d^4 
   - 14 a b^5 c^4 d^4 + 
   12 a^3 b c^6 d^4 - 22 a b^3 c^6 d^4 - 3 a b c^8 d^4 + 
   12 a^8 c d^5 + 38 a^6 b^2 c d^5 + 14 a^4 b^4 c d^5 - \\
   &- 
   14 a^2 b^6 c d^5 -2 b^8 c d^5 + 24 a^6 c^3 d^5 - 
   20 a^4 b^2 c^3 d^5 + 44 a^2 b^4 c^3 d^5 + 4 b^6 c^3 d^5 - 
   12 a^4 c^5 d^5 + 12 b^4 c^5 d^5 - \\
   &-4 a^2 c^7 d^5 - 2 b^2 c^7 d^5 +
   4 a^7 b d^6 + 24 a^5 b^3 d^6 - 4 a^3 b^5 d^6 - 4 a b^7 d^6 + 
   38 a^5 b c^2 d^6 - 4 a^3 b^3 c^2 d^6 + 14 a b^5 c^2 d^6 + \\
   &+
   22 a^3 b c^4 d^6 - 12 a b^3 c^4 d^6 +4 a^6 c d^7 - 
   32 a^4 b^2 c d^7 - 4 a^2 b^4 c d^7 + 4 b^6 c d^7 - 
   30 a^4 c^3 d^7 - 4 a^2 b^2 c^3 d^7 - \\
   & 
   - 6 b^4 c^3 d^7 + 
   2 a^2 c^5 d^7 + 4 b^2 c^5 d^7 + 12 a^5 b d^8 - 14 a^3 b^3 d^8 + 
   2 a b^5 d^8 + 6 a^3 b c^2 d^8 + 3 a b^3 c^2 d^8 + 3 a b c^4 d^8 - \\
   &-
   11 a^4 c d^9 
   +6 a^2 b^2 c d^9 + b^4 c d^9 + 2 a^2 c^3 d^9 + 
   b^2 c^3 d^9 - 3 a^3 b d^{10} + 2 a b^3 d^{10} - a^2 c d^{11} - 
   2 b^2 c d^{11} - a b d^{12}) \\
\end{align*}
\begin{align*}
k_{131} \ =& \ 4 (a b - c d)^2 (-2 a^{10} b c + a^8 b^3 c + 4 a^6 b^5 c - 
   2 a^4 b^7 c - 2 a^2 b^9 c + b^{11} c + 9 a^8 b c^3 - 
   22 a^6 b^3 c^3 - 4 a^4 b^5 c^3 + \\
   &+30 a^2 b^7 c^3 + 3 b^9 c^3 + 
   12 a^6 b c^5 + 20 a^4 b^3 c^5 - 64 a^2 b^5 c^5 - 4 b^7 c^5 - 
   10 a^4 b c^7 + 46 a^2 b^3 c^7 - 4 b^5 c^7 - \\
   &-10 a^2 b c^9 +
   3 b^3 c^9 + b c^{11} - a^{11} d + 2 a^9 b^2 d + 2 a^7 b^4 d - 
   4 a^5 b^6 d - a^3 b^8 d + 2 a b^{10} d + 10 a^9 c^2 d - \\
   &- 
   22 a^7 b^2 c^2 d - 6 a^5 b^4 c^2 d 
   +30 a^3 b^6 c^2 d + 
   4 a b^8 c^2 d + 10 a^7 c^4 d + 18 a^5 b^2 c^4 d - 
   66 a^3 b^4 c^4 d - 6 a b^6 c^4 d - \\
   &-12 a^5 c^6 d + 
   46 a^3 b^2 c^6 d 
   -6 a b^4 c^6 d - 9 a^3 c^8 d + 4 a b^2 c^8 d + 
   2 a c^{10} d - 4 a^8 b c d^2 - 30 a^6 b^3 c d^2 + 6 a^4 b^5 c d^2 + \\
   &+
   22 a^2 b^7 c d^2 - 10 b^9 c d^2 
   -46 a^6 b c^3 d^2 + 
   18 a^4 b^3 c^3 d^2 - 26 a^2 b^5 c^3 d^2 + 46 b^7 c^3 d^2 - 
   18 a^4 b c^5 d^2 - \\
   &- 26 a^2 b^3 c^5 d^2 - 64 b^5 c^5 d^2 
   + 
   22 a^2 b c^7 d^2 + 30 b^3 c^7 d^2 - 2 b c^9 d^2 - 3 a^9 d^3 - 
   30 a^7 b^2 d^3 + 4 a^5 b^4 d^3 + \\
   &+22 a^3 b^6 d^3 - 9 a b^8 d^3 - 
   46 a^7 c^2 d^3 + 26 a^5 b^2 c^2 d^3 
   -18 a^3 b^4 c^2 d^3 + 
   46 a b^6 c^2 d^3 - 20 a^5 c^4 d^3 - 18 a^3 b^2 c^4 d^3 - \\
   &-
   66 a b^4 c^4 d^3 + 22 a^3 c^6 d^3 + 30 a b^2 c^6 d^3 - a c^8 d^3 + 
   6 a^6 b c d^4 
   + 66 a^4 b^3 c d^4 - 18 a^2 b^5 c d^4 - 
   10 b^7 c d^4 + \\
   &+66 a^4 b c^3 d^4 + 18 a^2 b^3 c^3 d^4 + 
   20 b^5 c^3 d^4 + 6 a^2 b c^5 d^4 - 4 b^3 c^5 d^4 - 2 b c^7 d^4 
   +
   4 a^7 d^5 + 64 a^5 b^2 d^5 - \\
   &-20 a^3 b^4 d^5 - 12 a b^6 d^5 + 
   64 a^5 c^2 d^5 + 26 a^3 b^2 c^2 d^5 + 18 a b^4 c^2 d^5 + 
   4 a^3 c^4 d^5 - 6 a b^2 c^4 d^5 -4 a c^6 d^5 + \\
   &+ 6 a^4 b c d^6 - 
   46 a^2 b^3 c d^6 + 12 b^5 c d^6 - 30 a^2 b c^3 d^6 - 
   22 b^3 c^3 d^6 + 4 b c^5 d^6 + 4 a^5 d^7 - 46 a^3 b^2 d^7 
   +
   10 a b^4 d^7 - \\
   &
   -30 a^3 c^2 d^7 - 22 a b^2 c^2 d^7 + 2 a c^4 d^7 - 
   4 a^2 b c d^8 + 9 b^3 c d^8 + b c^3 d^8 - 3 a^3 d^9 + 
   10 a b^2 d^9 + 2 a c^2 d^9 - \\
   &-2 b c d^{10} - a d^{11}) 
 \end{align*}
 \begin{align*}
k_{113} \ =& \  (a c - b d) (a b - c d) (a^{12} - 2 a^{10} b^2 - a^8 b^4 + 4 a^6 b^6 - 
   a^4 b^8 - 2 a^2 b^{10} + b^{12} - 2 a^{10} c^2 + 10 a^8 b^2 c^2 
   + 
   12 a^6 b^4 c^2 - \\
   &- 12 a^4 b^6 c^2 - 10 a^2 b^8 c^2 + 2 b^{10} c^2 - 
   a^8 c^4 + 12 a^6 b^2 c^4 + 74 a^4 b^4 c^4 + 12 a^2 b^6 c^4 - 
   b^8 c^4 + 4 a^6 c^6 
   -12 a^4 b^2 c^6 + \\
   &+ 12 a^2 b^4 c^6 - 4 b^6 c^6 -
    a^4 c^8 - 10 a^2 b^2 c^8 - b^4 c^8 - 2 a^2 c^{10} + 2 b^2 c^{10} + 
   c^{12} - 8 a^9 b c d + 16 a^5 b^5 c d 
   -8 a b^9 c d + \\
   &+
   16 a^5 b c^5 d + 16 a b^5 c^5 d - 8 a b c^9 d + 2 a^{10} d^2 - 
   10 a^8 b^2 d^2 - 12 a^6 b^4 d^2 + 12 a^4 b^6 d^2 + 
   10 a^2 b^8 d^2 
   -2 b^{10} d^2 - \\
   &- 10 a^8 c^2 d^2 - 
   72 a^6 b^2 c^2 d^2 - 28 a^4 b^4 c^2 d^2 - 72 a^2 b^6 c^2 d^2 - 
   10 b^8 c^2 d^2 - 12 a^6 c^4 d^2 - 28 a^4 b^2 c^4 d^2 + \\
   &+
   28 a^2 b^4 c^4 d^2 + 12 b^6 c^4 d^2 + 12 a^4 c^6 d^2 - 
   72 a^2 b^2 c^6 d^2 + 12 b^4 c^6 d^2 + 10 a^2 c^8 d^2 - 
   10 b^2 c^8 d^2 - 2 c^{10} d^2 - \\
   &
   -64 a^3 b^3 c^3 d^3 - a^8 d^4 + 
   12 a^6 b^2 d^4 + 74 a^4 b^4 d^4 + 12 a^2 b^6 d^4 - b^8 d^4 + 
   12 a^6 c^2 d^4 + 28 a^4 b^2 c^2 d^4 - \\
   &-28 a^2 b^4 c^2 d^4 - 
   12 b^6 c^2 d^4 + 74 a^4 c^4 d^4 - 28 a^2 b^2 c^4 d^4 + 
   74 b^4 c^4 d^4 + 12 a^2 c^6 d^4 - 12 b^2 c^6 d^4 - c^8 d^4 + \\
   &+
   16 a^5 b c d^5 + 16 a b^5 c d^5 
   +16 a b c^5 d^5 - 4 a^6 d^6 + 
   12 a^4 b^2 d^6 - 12 a^2 b^4 d^6 + 4 b^6 d^6 + 12 a^4 c^2 d^6 - 
   72 a^2 b^2 c^2 d^6 + \\
   &+12 b^4 c^2 d^6 - 12 a^2 c^4 d^6 
   + 
   12 b^2 c^4 d^6 
   +4 c^6 d^6 - a^4 d^8 - 10 a^2 b^2 d^8 - b^4 d^8 - 
   10 a^2 c^2 d^8 + 10 b^2 c^2 d^8 - c^4 d^8 - \\
   &-8 a b c d^9 + 
   2 a^2 d^{10} - 2 b^2 d^{10} - 2 c^2 d^{10} + d^{12}) \\
   \end{align*}

\end{document}